\documentclass{amsart}
\usepackage{amssymb}
\usepackage{verbatim}
\usepackage{epic,eepic}

\newcommand{\Lap}{\Delta}
\newcommand\im{\operatorname{Im}}

\newcommand\NN{\mathbb{N}}
\newcommand\RR{\mathbb{R}}

\newcommand\Cx{\mathbb{C}}
\newcommand\sphere{\mathbb{S}}

\newcommand\dist{{\mathcal C}^{-\infty}}
\newcommand\CI{\mathcal{C}^\infty}
\newcommand\CIc{\mathcal{C}_c^\infty}
\newcommand\dCI{\dot{\mathcal{C}}^\infty}
\newcommand\Diff{\operatorname{Diff}}
\newcommand\pa{\partial}
\newcommand\supp{\operatorname{supp}}
\newcommand\ep{\epsilon}
\newcommand\bl{{\mathrm b}}

\newcommand\Vf{\mathcal{V}}
\newcommand\Vb{\mathcal{V}_\bl}

\newcommand\Diffb{\mathrm{Diff}_\bl}

\newcommand\Tb{{}^{\bl} T}

\newcommand\mf{\mathrm{mf}}
\newcommand\tf{\mathrm{tf}}
\newcommand\spf{\mathrm{spf}}

\newcommand{\E}{\mathcal{E}}

\newcommand\scl{{\mathrm{sc}}}
\newcommand\Tsc{{}^{\scl} T}
\newcommand\Vsc{\mathcal{V}_\scl}
\newcommand\Diffsc{\mathrm{Diff}_\scl}

\newcommand\Hb{H_\bl}

\newcommand\cL{\mathcal L}

\renewcommand{\Re}{\operatorname{Re}}

\newcommand{\abs}[1]{{\left\lvert{#1}\right\rvert}}
\newcommand{\norm}[1]{{\left\lVert{#1}\right\rVert}}

\newcommand{\ang}[1]{{\left\langle{#1}\right\rangle}}

\newcommand{\pA}{A'}
\newcommand{\ppA}{A''}

\newcommand{\tOmega}{\Omega}

\newcommand{\foobar}{s}
\newcommand{\decay}{\nu}
\newcommand{\dummyvar}{\mu}

\newcommand{\cutoff}{\vartheta}

\setcounter{secnumdepth}{3}
\newtheorem{lemma}{Lemma}[section]
\newtheorem{prop}[lemma]{Proposition}
\newtheorem{thm}[lemma]{Theorem}
\newtheorem{cor}[lemma]{Corollary}

\newtheorem*{thm1prime}{Theorem 1.1'}
\newtheorem*{thm*}{Theorem}
\newtheorem*{prop*}{Proposition}
\newtheorem*{cor*}{Corollary}
\newtheorem*{conj*}{Conjecture}
\numberwithin{equation}{section}
\theoremstyle{remark}

\newtheorem*{rem*}{Remark}
\theoremstyle{definition}

\newtheorem*{Def*}{Definition}

\title{Morawetz estimates for the wave equation at low frequency}
\author[Andras Vasy]{Andr\'as Vasy}
\author{Jared Wunsch}
\address{Department of Mathematics, Stanford University}
\address{Department of Mathematics, Northwestern University}
\email{andras@math.stanford.edu}
\email{jwunsch@math.northwestern.edu}
\thanks{The authors gratefully
  acknowledge partial support from the NSF under grant numbers  
  DMS-0801226 (AV) and DMS-0700318, DMS-1001463 (JW). A.V. is also
grateful for support from a Chambers Fellowship at Stanford University.
The authors are grateful to Daniel Tataru, and to an anonymous referee, for
many helpful comments, and in particular for suggesting the refined
$\ell^\infty$--$\ell^1$ version of the estimates in Theorem~\ref{thm1}.}
\date{\today}

\begin{document}

\begin{abstract}
  We consider Morawetz estimates for weighted energy decay of
  solutions to the wave equation on scattering manifolds (i.e., those
  with large conic ends).  We show that a Morawetz estimate persists
  for solutions that are localized at low frequencies, independent of
  the geometry of the compact part of the manifold.  We further prove
  a new type of Morawetz estimate in this context, with both
  hypotheses and conclusion localized inside the forward light cone.
  This result allows us to gain a $1/2$ power of $t$ decay relative to
  what would be dictated by energy estimates, in a small part of
  spacetime.
\end{abstract}

\maketitle

\section{Introduction}

In this paper, we show that the celebrated Morawetz estimate
\cite{Morawetz}, expressing dispersion of solutions to the wave
equation, holds for low-frequency solutions on a wide class of
manifolds with asymptotically flat ends.  We also generalize the
estimate to a local-in-spacetime estimate inside the forward light
cone.

It is well known that the decay of energy of a solution to $\Box u =0$
at \emph{high frequencies} is closely tied to the geometry of geodesic
rays; in particular, the existence of trapped geodesics is an
obstruction to the uniform decay of local energy (see Ralston
\cite{Ralston}).  Many subsequent results have demonstrated that the
decay of high frequency components of the solution persists in a wide
variety of geometric settings in which there is no trapping of rays
(see e.g.\ \cite{M-R-S}, \cite{Vainberg}, \cite{Vodev}).

On the other hand, the \emph{low-frequency} behavior of the solutions
is both more robust and less studied.  Intuitively, long wavelengths
should be sensitive only to the crudest aspects of the geometric
setting, and in particular, trapping should not be an obstacle to
decay.
In this paper, we work in the setting of \emph{scattering
  manifolds,}
i.e., we assume that the manifold\footnote{We remark that in the body
  of the paper, $X$ will refer to the manifold with boundary given by
  \emph{compactifying} such a space.} $X$ has ends resembling the large ends of cones, with the
metric taking the form
\begin{equation}\label{scmetric1}
g= dr^2 + r^2 h(r^{-1}, d y)
\end{equation}
where the noncompact ends are diffeomorphic to $(1,\infty)_r \times
Y,$ with $Y$ a smooth manifold, and where $h$ is a family (in $r$) of
metrics on $Y.$\footnote{We may a priori assume that $h$ is in fact a
  smooth tensor including $dr$ components; that we may then change
  variables to remove these components is a result of Joshi-S\'a
  Barreto \cite{Joshi-SaBarreto}, Section 2.}  Our two main theorems
demonstrate the insensitivity of the low-energy behavior of the wave
equation to geometry on all but the largest scales.
We also allow long-range perturbations of the metrics \eqref{scmetric1}; see
Section~\ref{sec:conjugated} for the precise definitions.

The results in this paper are as follows.  To begin with, we state a
result holding for low frequency solutions in spatial dimension $n
\geq 4$ that generalizes the standard Morawetz estimate.  We fix a
cutoff function $\psi \in \CIc((0,\infty)).$ Let $\Psi_H$ denote the
frequency-localization operator $$\Psi_H = \psi(H^2(\Lap_g+V)).$$ We
assume that the function $r,$ appearing in the end structure of the
metric \eqref{scmetric1}, is extended to be a globally defined, smooth
function on $X,$ with $\min(r) \geq 2$ (so we may take $\log r$ with
impunity).  Let $\chi_0,$ $\chi_1$ be a partition of unity with
$\chi_1$ supported in $r >R \gg 0,$ hence in the ends in which the
product decomposition \eqref{scmetric1} applies.  Here and throughout
the paper, $\norm{\bullet}$ will refer to the norm in the space
$L^2(X, dg).$  When we employ spacetime norms in \S\ref{sec:local} we
will employ distinct notation $\norm{\bullet}_M$ where $M$ is (the
compactification of) the spacetime $\RR\times X.$
\begin{thm}\label{thm1}
Let $X$ be a scattering manifold of dimension $n \geq 4$ with a long-range metric,
as in \eqref{eq:metric-form}--\eqref{eq:g-1-form}, and let $V\geq 0$ be a
symbol of order $-2-\decay,$ $\decay>0$.  Let $u$ be a solution to the inhomogeneous wave
equation
$$
(\Box +V) u=f,\quad f\in L^1(\RR; L^2(X))
$$
on $\RR_t\times X,$ with initial data
$$
u|_{t=0}=u_0 \in H^1,\quad \pa_t u|_{t=0} =v_0\in L^2.
$$
Then for $H$ sufficiently large,
\begin{equation}\label{u-est-largedim}
\begin{aligned}
\int_0^\infty &\Big(\norm{r^{-3/2} \Psi_H u}^2 + \norm{\chi_1 (\log r)^{-1} r^{-1/2} \pa_r \Psi_H
  u}^2
\\ &\qquad\qquad+\norm{\chi_1r^{-3/2} \nabla_Y \Psi_H u}^2  + \norm{\chi_0
  \nabla_g \Psi_H u}^2\Big)\,dt\\
& \lesssim \E(0)+ \big( \int_0^\infty \norm{f} \, dt\big)^2,
\end{aligned}
\end{equation}
where $\E(0)$ is the initial energy
$$
\E(0) =  \norm{\Psi_H u_0}^2_{H^1}+\norm{\Psi_H v_0}^2_{L^2}.
$$
\end{thm}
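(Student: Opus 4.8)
The plan is to run a Morawetz-type positive-commutator argument, but adapted so that (a) all estimates are localized to frequencies of size $\sim H^{-1}$ via the operator $\Psi_H$, and (b) the multiplier is chosen to be compatible with the conic end structure \eqref{scmetric1}. Let me write $P = \Lap_g + V$, so that $\Box = \pa_t^2 + P$ acting on $\RR_t\times X$. The first step is to commute $\Psi_H$ through the equation: since $\psi\in\CIc((0,\infty))$ and $V$ is a symbol of order $-2-\decay$, the commutator $[\Box, \Psi_H]$ is lower order (it gains decay from the $V$-part and, more importantly, $\Psi_H$ is a function of $P$ so it commutes with $P$ itself up to the $V$-contribution), hence $w := \Psi_H u$ solves $(\Box + V)w = \Psi_H f + [\text{error}]$ with the error controlled by $\int\norm f\,dt$ and $\E(0)$. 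From now on I work with $w$, which is genuinely frequency-localized: this means spatial derivatives of $w$ are comparable to $H^{-1}w$ in $L^2$, which is the crucial gain allowing the $r^{-3/2}$ weight on $w$ itself (rather than just on $\nabla w$) and the replacement of the usual $\pa_r$ term by the weaker $\chi_1(\log r)^{-1}r^{-1/2}\pa_r w$.

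Next I introduce the Morawetz multiplier. The classical choice is a vector field of the form $a(r)\pa_r + \tfrac12(\pa_r a)$ (symmetrized); here I would take $a(r)$ interpolating between $a(r)\sim r$ near the conic ends — to capture dispersion in the large-$r$ region — and $a(r)$ adapted to give the full gradient term $\chi_0\nabla_g w$ on the compact part, where frequency localization does the work of overcoming any trapping. The commutator identity
\[
\tfrac{d}{dt}\big\langle \pa_t w, (a\pa_r + \tfrac12 a')w\big\rangle = \big\langle [\Box, a\pa_r+\tfrac12 a']w, w\big\rangle + \text{(bulk terms from }f\text{)}
\]
produces, after integration in $t$ from $0$ to $\infty$, a spacetime integral of $\langle [P, a\pa_r + \tfrac12 a']w,w\rangle$ on the left, whose principal part is a positive quadratic form in $\pa_r w$ and $r^{-1}\nabla_Y w$ plus a zeroth-order term in $w$ coming from the $a'''$ and $V$ contributions. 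The boundary terms at $t=0$ and $t=\infty$ are bounded by $\E(0)$ (using energy conservation and that $a/r$ and $a'$ are bounded), and the $f$-terms by Cauchy–Schwarz in $t$, yielding the $(\int\norm f\,dt)^2$ on the right.

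The heart of the matter is showing the commutator form is positive with the claimed weights. In the conic region $r>R$, writing $P$ in the form $\pa_r^2 + \tfrac{n-1}{r}\pa_r + r^{-2}\Lap_Y + (\text{long-range errors})$, the commutator with $a\pa_r + \tfrac12 a'$ with $a\sim r$ gives a term $\sim r^{-3}\abs{\nabla_Y w}^2$ — this is the $\chi_1 r^{-3/2}\nabla_Y w$ term — and the $\pa_r$ part of the commutator is the delicate one: the naive positive term degenerates, and one recovers only a $(\log r)^{-2} r^{-1}\abs{\pa_r w}^2$ lower bound, which is exactly why the logarithmic weight appears in the statement. This logarithmic loss, and the mechanism by which frequency localization lets one absorb the zeroth-order term $\norm{r^{-3/2}w}^2$ (which would otherwise be borderline-uncontrollable in this dimension-dependent way, whence $n\geq 4$), is the main obstacle; I expect it to require a separate Hardy-type inequality on the conic end, combined with the frequency cutoff to trade $\abs w^2$ against $\abs{\nabla w}^2$. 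The long-range perturbation terms in $g-1$ and the potential $V$ are then handled as small relative errors, absorbed into the main positive terms by taking $R$ and $H$ large, since they come with extra decay in $r$.
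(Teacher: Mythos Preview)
Your overall architecture is right---a positive-commutator argument with a radial multiplier, with error terms absorbed via frequency localization---but there are two concrete gaps.

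First, the multiplier $a(r)\sim r$ is the wrong choice. With $a$ growing like $r$, the boundary terms $\langle \pa_t w,\, a\pa_r w\rangle\big|_0^T$ are of size $\|r^{1/2}\pa_t w\|\,\|r^{1/2}\pa_r w\|$, which is \emph{not} controlled by the energy $\E$. The paper instead takes $a(r)=1-1/\log r$, which is \emph{bounded}; then the boundary terms are dominated by energy (via Hardy/Poincar\'e). This choice also explains the logarithmic loss cleanly: since $a$ is bounded and increasing, $a'(r)=1/(r(\log r)^2)$ is essentially forced, and the $\pa_r$-part of the commutator is exactly $2\pa_r^*a'(r)\pa_r$, yielding the term $\|\chi_1(\log r)^{-1}r^{-1/2}\pa_r w\|^2$. (Your own weight computation---$r^{-3}|\nabla_Y w|^2$---is actually consistent with $a$ bounded, not with $a\sim r$, which would give $r^{-2}$.) The zeroth-order commutator term is $\tfrac{(n-1)(n-3)}{2}r^{-3}$, which is where $n\geq 4$ first enters: for $n=3$ there is no positive $\|r^{-3/2}w\|^2$ term to absorb errors into.

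Second, your account of how frequency localization controls the error is too vague to close the argument. After cutting off near infinity (since $\pa_r$ and the warped-product form are only defined there), the commutator acquires a second-order error $r^{-4}E_0$ with $E_0\in\Diffsc^2(X)$. To absorb this into the positive $\|r^{-3/2}\Psi_H u\|^2$ term one needs
\[
\big|\langle r^{-4}E_0\,\Psi_H u,\,\Psi_H u\rangle\big|\ \lesssim\ H^{-1+\epsilon}\,\|r^{-3/2}\Psi_H u\|^2,
\]
which reduces to a \emph{weighted} operator bound $\|L\,x^{s+\rho}\Psi_H\,x^{-s}\|\leq CH^{-\rho}$ for $L\in\Diffsc^1(X)$ and $0\leq s\leq s+\rho<\min(2,n/2)$; here one takes $s=3/2$ and $\rho$ close to $1/2$, which again forces $n\geq 4$. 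This is the paper's Proposition~\ref{prop:conjugated-cutoff}, and it is the main technical input---it does not follow from the na\"{\i}ve ``$\nabla\Psi_H\sim H^{-1}\Psi_H$'' heuristic, but requires a resolvent-commutator argument (Helffer--Sj\"ostrand formula plus iterated commutation of weights through $R(z)$) with careful tracking of powers of $x$. A minor point: $\Psi_H=\psi(H^2(\Lap_g+V))$ commutes \emph{exactly} with $\Box+V$, so there is no commutator error in your first step.
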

Here $H^1$ and $L^2$ denote the appropriately defined spaces on $X$
(and $\norm{\bullet}$ denotes the $L^2(X)$ norm):
$L^2$ is the space of functions square integrable with respect to the
metric, and $H^1,$ which will henceforth be written $H^1_\scl$ (the
``scattering'' Sobolev space, associated to the geometry of large
conic ends) denotes
the space of functions satisfying
$$
\int \abs{u}^2 + \abs{\nabla_g u}_g^2 \, dg < \infty.
$$
The Laplacian $\Lap_g$ is the nonnegative Laplacian, and $\psi(H^2\Lap_g)$
is localizing at low energies.  The result thus shows that the
dispersive effects of the large scale scattering geometry always hold for
low-frequency solutions, regardless of trapping or other local
features.  We further remark that we could replace the $L^1(\RR;  L^2(X))$ norm on $f$ on the RHS of
the estimate by
\begin{equation}\label{eq:log-forcing}
\int_0^\infty \norm{(r^2+t^2)^{1/4}
 \log (r^2+t^2) f}^2 \, dt,
\end{equation}
if desired, to obtain a weighted $L^2$ spacetime norm instead.

Following the suggestion of an anonymous referee, we are in fact able to
prove a refined version of this result, with logarithmic losses replaced by
estimates in $\ell^1$ and $\ell^\infty$ norms on energy in dyadic spatial
shells.  Let $\Upsilon_k$ denote a spatial decomposition in radial
dyadic
shells (to be discussed further in \S\ref{sec:Morawetz}, see
\eqref{eq:Ups-def}), so $r\sim 2^k$ on $\Upsilon_k$.
\begin{thm1prime}\label{thm1prime}
Let $X$ be a scattering manifold of dimension $n \geq 4$ with a long-range metric,
as in \eqref{eq:metric-form}--\eqref{eq:g-1-form}, and let $V\geq 0$ be a
symbol of order $-2-\decay,$ $\decay>0$.  Let $u$ be a solution to the inhomogeneous wave
equation
$$
(\Box +V) u=f,\quad f\in L^1(\RR; L^2(X)) \cap \ell^1(\NN_k; L^2(\RR
\times \Upsilon_k))
$$
on $\RR_t\times X,$ with initial data
$$
u|_{t=0}=u_0 \in H^1,\quad \pa_t u|_{t=0} =v_0\in L^2.
$$
Then for $H$ sufficiently large,
\begin{equation}\label{u-est-largedim-prime}
\begin{aligned}
\int_0^\infty &\Big(\norm{r^{-3/2} \Psi_H u}^2 +\norm{\chi_1r^{-3/2}
  \nabla_Y \Psi_H u}^2
  + \norm{\chi_0
  \nabla_g \Psi_H u}^2\Big)\,dt\\
&\qquad\qquad+\norm{\chi_1 r^{-1/2}\pa_r
 \Psi_H u }_{\ell^\infty(\NN_k; L^2([0,\infty))\times \Upsilon_k))}^2 \\
& \lesssim \E(0)+ \norm{f}_{L^1([0,\infty); L^2(X))}^2 + \norm{r^{1/2}f}_{\ell^1(\NN_k; L^2([0,\infty)
\times \Upsilon_k))}^2
\end{aligned}
\end{equation}
where $\E(0)$ is the initial energy
$$
\E(0) =  \norm{\Psi_H u_0}^2_{H^1}+\norm{\Psi_H v_0}^2_{L^2}.
$$
\end{thm1prime}

Here we could drop $\int_0^\infty\norm{\chi_1 (\log r)^{-1} r^{-1/2} \pa_r \Psi_H
 u}^2\,dt$ from the left hand side of \eqref{u-est-largedim-prime} as
compared to
\eqref{u-est-largedim}
without any loss, since it
can be estimated by
\begin{equation*}\begin{aligned}
\int_0^\infty&\norm{\chi_1 (\log r)^{-1} r^{-1/2} \pa_r \Psi_H
 u}^2\,dt=\sum_k\int_0^\infty\int_{\Upsilon_k}\chi_1^2 (\log r)^{-2} r^{-1} |\pa_r \Psi_H
 u|^2\,dg\,dt\\
&\lesssim  \Big(\sum_k k^{-2}\Big)
 \norm{\chi_1 r^{-1/2}\pa_r
 \Psi_H u }_{\ell^\infty(\NN; L^2([0,\infty))\times \Upsilon_k))}^2,
\end{aligned}\end{equation*}
so the quantity we estimate on the left hand side in
Theorem~1.1' indeed
stronger than that in Theorem~\ref{thm1}. A similar argument shows
that \eqref{eq:log-forcing} also dominates the third term on the
right hand side of \eqref{u-est-largedim-prime}.

Our second main result is a version of the Morawetz estimate that is
localized in spacetime, with both hypotheses and conclusion localized
in a sub-cone of the forward light cone.  It holds in all dimensions
$n\geq 3.$

For $\delta>0$ let \begin{equation}\label{omegadef}\tOmega=\tOmega_{\delta} =\{t>1/\delta,\
r/t<\delta\}\subset \RR \times X.\end{equation}
This is the (asymptotic) cone in which we will localize.

In the statement of the following theorem, $\nabla$ denotes the
space-time gradient.
\begin{thm}\label{thm2}
Let $X$ be a scattering manifold of dimension $n \geq 3$ with
a long-range metric as in \eqref{eq:metric-form}--\eqref{eq:g-1-form}, $V\geq 0$ a
symbol of order $-2-\decay,$ $\decay>0$.  Let
$$
(\Box +V) u=f.
$$
Suppose that $\delta<1$ and
$$
\nabla \Psi_H u\in t^\kappa L^2(\tOmega),\ f\in r^{-1/2}t^{\kappa-1/2} L^2(\tOmega),
\ \kappa\in\RR.
$$
Then for $H$ sufficiently large,
$$
\nabla \Psi_H u\in t^\kappa(r/t)^\sigma L^2(\tOmega),
\ \Psi_H u\in t^\kappa r (r/t)^\sigma L^2(\tOmega)
$$
for $0<\sigma<1/2$.

In particular, if $K$ is compact in $X^\circ$, then on $\RR\times K$,
$\Psi_H u,\nabla \Psi_H u$ are in $t^{\kappa-1/2+\ep}L^2(\RR\times X)$ for every $\ep>0$ (for $H$
sufficiently large).
\end{thm}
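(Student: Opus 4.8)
Here is how I would approach Theorem~\ref{thm2}. The plan is a positive commutator (vector field multiplier) argument carried out inside the cone $\tOmega$, with a multiplier carrying the weight $(r/t)^{2\sigma}$ (together with a $t^{-2\kappa}$ factor, or equivalently after first conjugating the equation by $t^{-\kappa}$), the low-order and perturbative terms being disposed of by the low-frequency localization.

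\textbf{Reduction.} Set $w=\Psi_H u$. Since $\Psi_H=\psi(H^2(\Lap_g+V))$ is a function of the time-independent spatial operator $\Lap_g+V$, it commutes with $\Box+V=\pa_t^2+\Lap_g+V$, so $(\Box+V)w=\Psi_H f=:\tilde f$, and $\tilde f$ inherits the weighted bound of $f$ because $\Psi_H$ is smoothing in the scattering calculus, hence bounded on the relevant weighted $L^2$ spaces. For $H$ large, $\Psi_H$ is concentrated at low spatial frequency away from the characteristic variety; this lets me treat $V$ (a symbol of order $-2-\nu$) and the long-range part of the metric as error terms, since $V\Psi_H u$ and $(\Lap_g-\Lap_{\g_0})\Psi_H u$ gain powers of $r$ (and, using the low-frequency localization to trade second derivatives for $H^{-1}$ times first derivatives, a factor of $H^{-1}$) relative to $\Lap_{\g_0}\Psi_H u$; taking $H$ large makes them absorbable. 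So it suffices to prove the estimate for the model operator $\pa_t^2+\Lap_{\g_0}$, i.e.\ $(\pa_t^2+\Lap_{\g_0})w=g$ with $g\in r^{-1/2}t^{\kappa-1/2}L^2(\tOmega)$ and $\nabla w\in t^\kappa L^2(\tOmega)$.

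\textbf{The multiplier estimate.} Choose a cutoff $\chi(r/t)$ equal to $1$ on $r/t<\delta/2$ and supported in $r/t<\delta$, set $v=\chi w$, and note $(\pa_t^2+\Lap_{\g_0})v=\chi g+[\pa_t^2+\Lap_{\g_0},\chi]w$, where the commutator is first order, supported in $\delta/2<r/t<\delta$, and carries a factor $t^{-1}$; hence $\norm{r^{-1/2}t^{\kappa-1/2}[\pa_t^2+\Lap_{\g_0},\chi]w}_{L^2(\tOmega)}\lesssim\norm{t^\kappa\nabla w}_{L^2(\tOmega)}<\infty$ by hypothesis. This is where the a priori finiteness assumption enters, and it is precisely what replaces a (non-positive) boundary term on the timelike lateral face $\{r=\delta t\}$. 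Now I pair the equation for $v$ against $Av$, where $A$ is a symmetrized radial Morawetz multiplier, schematically $A=t^{-2\kappa}\big(a(r/t)(\pa_r+\tfrac{n-1}{2r})+b(r/t)\pa_t\big)$ with $a'>0$, $a(0)=0$, $a(s)\sim s^{2\sigma}$ near $s=0$, and $b$ chosen so that the cross terms from the $t$-dependence of the weight and from $\pa_t^2$ do not spoil the sign. Integrating by parts over $\tOmega$ (the face $\{t=1/\delta\}$ is empty for $\delta<1$ since $r\geq2$, and there is no lateral boundary term thanks to the cutoff), the bulk term is, modulo absorbable errors, bounded below by a positive multiple of
\[
\int_{\tOmega} t^{-2\kappa}(r/t)^{2\sigma}\Big(r^{-1}\abs{\pa_r w}^2+r^{-1}\abs{\nabla_Y w}^2+r^{-1}\abs{\pa_t w}^2+r^{-3}\abs{w}^2\Big)\,dg\,dt,
\]
while the forcing pairing $\ang{g,Av}$ is controlled, by Cauchy--Schwarz, by $\norm{r^{-1/2}t^{\kappa-1/2}g}_{L^2(\tOmega)}^2$ plus a small multiple of this positive term, and the $[\,\cdot\,,\chi]$ error is controlled by the finite quantity above. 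This yields $\nabla\Psi_H u\in t^\kappa(r/t)^\sigma r^{-1/2}L^2(\tOmega)$, hence the stated conclusion for $\nabla\Psi_H u$ (after restricting to $r/t<\delta/2$ and relabelling $\delta$, which is harmless since $\delta<1$ is arbitrary); the bound on $\Psi_H u$ itself follows from a Hardy-type inequality in the cone comparing $\int t^{-2\kappa}(r/t)^{2\sigma}r^{-3}\abs w^2$ to the corresponding gradient integral. I expect the restriction $0<\sigma<1/2$ to enter exactly at these last two points: $\sigma$ must lie strictly between the indicial roots of the model radial operator, and the relevant Hardy constant is proportional to $\sigma(1-2\sigma)$, positive precisely for $0<\sigma<1/2$.

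\textbf{Main obstacle, and the corollary.} The hard part is the sign of the bulk term: a purely radial multiplier controls $\abs{\pa_r w}^2$ and $\abs{\nabla_Y w}^2$ but produces a wrong-sign $\abs{\pa_t w}^2$ contribution, so one must combine it with a $\pa_t$-component (equivalently, work with the scaling field $t\pa_t+r\pa_r$, which is conformal Killing for the model wave operator up to lower order) and use that $r/t<\delta$ is small to make the full quadratic form in $(\pa_t w,\pa_r w,\nabla_Y w,w)$ positive definite with the right weights; the $t$-dependence of $(r/t)^{2\sigma}$ and the need to keep all errors of the form $t^{-2\kappa}(r/t)^{2\sigma-1}\times(\text{lower order})$ with small constants require care, and verifying the simultaneous ODE/sign conditions on $a$ and $b$ is the crux. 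Finally, the ``in particular'' statement is immediate: on $\RR\times K$ with $K\Subset X^\circ$ one has $r$ bounded, so $(r/t)^\sigma\sim t^{-\sigma}$ and $t^\kappa(r/t)^\sigma\lesssim t^{\kappa-\sigma}$ with $\sigma$ up to but not including $1/2$, while the region $t\lesssim1/\delta$ carries finite norm by the hypothesis together with local well-posedness; taking $\sigma=1/2-\ep$ gives $\Psi_H u,\nabla\Psi_H u\in t^{\kappa-1/2+\ep}L^2(\RR\times X)$ for every $\ep>0$.
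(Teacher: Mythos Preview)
Your overall plan---a localized positive-commutator argument in the cone, with the error on $\supp d\chi(r/t)$ controlled by the a priori hypothesis $\nabla\Psi_H u\in t^\kappa L^2(\tOmega)$---matches the paper's. But there is one genuine gap, one methodological difference, and a sign slip in the weight.

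\textbf{The gap: the interior of $X$ and the true role of the low-frequency cutoff.} Your multiplier is built from $\pa_r+\tfrac{n-1}{2r}$, but on a general scattering manifold this vector field is only defined in the end $r>r_0$; the set $\{r/t<\delta\}$ contains the whole compact interior of $X$ once $t$ is large, so ``reducing to the model operator $\pa_t^2+\Lap_{g_0}$'' does not even make sense globally. The paper therefore inserts an additional \emph{spatial} cutoff $\chi(r)$ into the commutant, and the resulting error $r^{s-4}E_s$, $E_s\in\Diffsc^2(X)$, is supported in a fixed compact set where the positive Morawetz terms (carrying $r^{s-3}$) are not coercive. It is precisely \emph{this} term---not the long-range metric perturbation $g_1$ or the potential $V$---that forces the low-frequency localization: one shows via Proposition~\ref{prop:conjugated-cutoff} (packaged as Lemma~\ref{lemma1}) that $|\ang{r^{s-4}E_s\Psi_H u,\Psi_H u}|\lesssim H^{-2\ep'}\|r^{(s-3)/2}\Psi_H u\|^2$, absorbable for $H$ large, and one also needs the pseudolocality of $\tilde\psi(H^2(\Lap_g+V))$ in $r/t$ (Proposition~\ref{prop:weighted-localized-spectral-cutoff}) to reinstate the $\phi(r/t)$ localization afterwards. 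Your ``Reduction'' paragraph has the mechanism backwards: the $g_1$ and $V$ contributions decay at infinity and are absorbed directly into the positive terms (cf.\ \eqref{eq:absorb-tilde-Es}) with no use of $H$; the compact-region error is what requires $H\gg1$.

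\textbf{The difference: no $\pa_t$ component in the multiplier.} You flag the wrong-sign $|\pa_t w|^2$ term as the crux and propose curing it by adding $b(r/t)\pa_t$ to the multiplier. The paper's commutant is purely radial, $B_{k,s}\sim\phi(r/t)^2 t^{-k}(r/t)^{s}\chi(r)\pa_r$ with $s=1-2\sigma$, $k=2\kappa-1$. The cross terms from $[\pa_t^2,t^{-k-s}\phi^2]$ carry an extra factor of $r/t\leq 2c$, so Cauchy--Schwarz bounds them by $c$ times the good spatial terms plus $c\|t^{-(k+s)/2}r^{(s-1)/2}\phi\,\pa_t u\|^2$; the paper then controls the $\pa_t u$ norm \emph{separately}, by the indefinite Dirichlet-form identity \eqref{eq:D-form-1}--\eqref{eq:D-form-main}, which trades $\|\pa_t u\|$ for $\|\nabla_X u\|+\|(\Box+V)u\|$ plus terms supported on $\supp d(\phi\tilde\phi)$. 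Your scaling-field route may work, but it is a different argument, and it still runs into the interior issue above.

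\textbf{The weight.} The conclusion $\nabla\Psi_H u\in t^\kappa(r/t)^\sigma L^2(\tOmega)$ means finiteness of $\int t^{-2\kappa}(t/r)^{2\sigma}|\nabla u|^2$; your bulk term carries $(r/t)^{2\sigma}$ instead of $(t/r)^{2\sigma}$. Correspondingly the multiplier weight should be $a(s)\sim s^{1-2\sigma}$, not $s^{2\sigma}$; the constraint $0<\sigma<1/2$ is exactly $0<1-2\sigma<1$, which is what makes all three spatial terms in \eqref{comm2} strictly positive.
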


This theorem gives almost half an order of decay as compared to the a
priori assumption, but only in a small part of space-time. Indeed, for
any $\delta_0>0$, the conclusion in $r/t>\delta_0$ is the same as
hypothesis, so in particular $\delta$ in the statement of the theorem
can be taken arbitrarily small without losing the strength of the
conclusion. Thus, Theorem~\ref{thm2} really amounts to a regularity
statement; it is a close analogue of the statement for hyperbolic PDE
that over compact sets, one has extra regularity over a priori
space-time Sobolev regularity (square integrability can be replaced by
continuity along the flow). Indeed, an analogous statement is that
locally $L^2$ solutions of the wave equation in space time, near
$t=0$, are in $t^\sigma L^2$ locally for $\sigma\in(0,1/2)$; this
follows from continuity in time with values in $L^2$ and the fact that
$t^{-\sigma}$ is in $L^2(\RR)$ locally.

One virtue of this version of
the Morawetz estimate, and even more of the approach to obtaining this estimate,
is that they hold the promise of applying quite
broadly, in non-product spacetimes. Even in the present inhomogeneous form they
can be used to study the wave equation for
metrics which arise by perturbing the Minkowski metric
near infinity in the following way: pick $p_j$, $j=1,\ldots,N$, on the sphere at
infinity, $\sphere^n=\pa\overline{\RR^{n+1}_{t,z}}$,
in the radial compactification of $\RR^{n+1}=\RR_t\times\RR^n_z$
which lie in the interior of
the forward light cone. Blow up the $p_j$; if one of the $p_j$ is the ``north pole,''
corresponding to $z=0$, $t=+\infty$, a neighborhood of the front face can be
identified with a neighborhood of ``temporal infinity'' in the product asymptotically
Euclidean space-time (see Section~\ref{sec:local}); the other $p_j$ can be transformed
to this via isometries of Minkowski space-time. Thus, if the Minkowski metric
is perturbed in a way corresponding to the local product structure these transformations
induce, modulo decaying terms in time, the local Morawetz estimate is applicable
by treating the decaying terms as inhomogeneities.
In future work, we
plan to address such non-product geometries more systematically.

By the standard energy estimate, if $(\Box+V) u=0$ and $u$ is in the
energy space, then for $\phi\in\CI_c(\RR)$,
$$
\phi(r/t)\nabla u\in t^{1/2+\delta}L^2(\RR\times X),\ \delta>0.
$$
This observation immediately yields
\begin{cor}\label{cor1}
  Let $X,$ $g,$ and $V$ be as in Theorem~\ref{thm2}.  Suppose that $\Box
  u=0,$ with initial conditions
$$
u|_{t=0}=u_0 \in H^1,\quad \pa_t u|_{t=0} =v_0\in L^2.
$$
There exists $\delta>0$ small such that
\begin{equation}
\iint_{\tOmega_\delta} \abs{t^{\sigma-1/2-\delta} r^{-1-\sigma}
  \Psi_H u}^2+\abs{t^{\sigma-1/2-\delta} r^{-\sigma} \nabla
 \Psi_H u}^2
 \, dt \, dg \lesssim \E(0)
\end{equation}
whenever $\sigma \in (0,1/2),$ and $H \gg 0$ is sufficiently large.
\end{cor}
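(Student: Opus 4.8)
The plan is to obtain Corollary~\ref{cor1} directly from Theorem~\ref{thm2}, using the elementary energy bound recalled just above as the a priori input, applied to $\Psi_H u$. First I would record that $\Psi_H=\psi(H^2(\Lap_g+V))$ commutes with $\Box+V=\pa_t^2+(\Lap_g+V)$, so $\Psi_H u$ again solves the homogeneous equation, now with Cauchy data $\Psi_H u_0\in H^1_\scl$ and $\Psi_H v_0\in L^2$. Because $g$ is independent of $t$ and $V\ge0$ is a bounded symbol, the conserved energy of $\Psi_H u$, namely $E(t)=\int_X\abs{\pa_t\Psi_H u}^2+\abs{\nabla_g\Psi_H u}_g^2+V\abs{\Psi_H u}^2\,dg$, satisfies $E(t)=E(0)\lesssim\E(0)$; in particular $\int_X\abs{\nabla\Psi_H u(t)}^2\,dg\lesssim\E(0)$ for all $t$, with $\nabla$ the space-time gradient.

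The second step is to upgrade this to the hypothesis of Theorem~\ref{thm2} with $\kappa=1/2+\delta$. Fix $\phi\in\CIc(\RR)$ with $\phi\equiv1$ on $[-\delta,\delta]$ and $\supp\phi\subset(-2\delta,2\delta)$. The key point is that, since $r\ge2$ on $X$, the factor $\phi(r/t)$ is supported where $r/t<2\delta$, hence where $t>1/\delta$: it vanishes near $t=0$. Combining this with the uniform energy bound gives
$$
\iint\abs{t^{-1/2-\delta}\phi(r/t)\,\nabla\Psi_H u}^2\,dt\,dg\ \lesssim\ \E(0)\int_{t>1/\delta}t^{-1-2\delta}\,dt\ \lesssim\ \E(0),
$$
and since $\phi(r/t)\equiv1$ on $\tOmega_\delta$ this says exactly that $\nabla\Psi_H u\in t^{1/2+\delta}L^2(\tOmega_\delta)$, with squared norm $\lesssim\E(0)$. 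The forcing $f=0$ trivially lies in $r^{-1/2}t^{\kappa-1/2}L^2(\tOmega_\delta)$.

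Third, I would apply Theorem~\ref{thm2} with this $\kappa$, having fixed $\delta\in(0,1)$ and taken $H$ large. For every $\sigma\in(0,1/2)$ its conclusion gives $\nabla\Psi_H u\in t^{1/2+\delta}(r/t)^\sigma L^2(\tOmega_\delta)$ and $\Psi_H u\in t^{1/2+\delta}\,r\,(r/t)^\sigma L^2(\tOmega_\delta)$. Unwinding the weights, the first says $t^{\sigma-1/2-\delta}r^{-\sigma}\nabla\Psi_H u\in L^2(\tOmega_\delta)$ and the second says $t^{\sigma-1/2-\delta}r^{-1-\sigma}\Psi_H u\in L^2(\tOmega_\delta)$, which is precisely the asserted inequality once one keeps track of constants. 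For that I would use Theorem~\ref{thm2} in the quantitative form its proof furnishes — the weighted left-hand norms being controlled by $\norm{t^{-\kappa}\nabla\Psi_H u}_{L^2(\tOmega_\delta)}+\norm{r^{1/2}t^{1/2-\kappa}f}_{L^2(\tOmega_\delta)}$ — whence the bound $\lesssim\E(0)$.

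I do not anticipate a real obstacle; the care is all in the bookkeeping. The one genuinely load-bearing observation is that $\phi(r/t)$ is automatically localized away from $t=0$: that is what makes $\int t^{-1-2\delta}\,dt$ converge and what produces (rather than obstructs) the harmless, arbitrarily small loss $t^\delta$ in place of the divergent borderline $t^{1/2}$. Beyond that, one should invoke the quantitative rather than the qualitative form of Theorem~\ref{thm2} in order to retain the dependence on $\E(0)$, and one takes a single small $\delta\in(0,1)$ to serve both as the aperture of $\tOmega_\delta$ and as the size of the loss — exactly as in the statement — so that the implicit constants depend on $\delta$ (and degenerate as $\delta\to0$), which the corollary permits.
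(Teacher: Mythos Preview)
Your proposal is correct and follows exactly the approach the paper indicates: the paper simply remarks that the energy estimate gives $\phi(r/t)\nabla u\in t^{1/2+\delta}L^2(\RR\times X)$ for $\delta>0$ and states that ``this observation immediately yields'' the corollary via Theorem~\ref{thm2} with $\kappa=1/2+\delta$ and $f=0$. Your write-up supplies precisely the bookkeeping the paper elides, including the key point that $\supp\phi(r/t)\subset\{t>1/\delta\}$ (since $r\ge2$), which makes $\int t^{-1-2\delta}\,dt$ finite.
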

The result is of special interest in the case $n=3,$ where
Theorem~\ref{thm1} does not apply, but where Corollary~\ref{cor1} yields
a spatially localized estimate.  Let $K \subset X^\circ$ be compact.
Then Corollary~\ref{cor1} specializes to show:
\begin{cor}
For all $\ep>0,$
$$
\int_0^\infty \norm{t^{-\ep}\Psi_H
  u}^2+\norm{t^{-\ep}\nabla\Psi_H u}^2 \, dt\lesssim \E(0).
$$
for $H$ sufficiently large.
\end{cor}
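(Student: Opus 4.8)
\emph{Proof proposal.} The claim is an immediate consequence of Corollary~\ref{cor1} together with the standard energy estimate, so the plan is essentially bookkeeping. First I would record that, since $r$ is smooth on $X$ with $r\geq 2$, on any compact $K\subset X^\circ$ we have $c_K\leq r\leq C_K$ for positive constants $c_K,C_K$; hence the radial weights $r^{-1-\sigma}$ and $r^{-\sigma}$ appearing in Corollary~\ref{cor1} are comparable to $1$ on $K$ and may be dropped at the cost of a $K$-dependent constant. I would also observe that, because $r\leq C_K$ on $K$, the slab $\{t\geq T\}\times K$ is contained in $\tOmega_\delta=\{t>1/\delta,\ r/t<\delta\}$ as soon as $T\geq\max(1/\delta,\,C_K/\delta)$: for such $t$ one automatically has $r/t\leq C_K/t<\delta$. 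Consequently, restricting the domain of integration in Corollary~\ref{cor1} to $\{t\geq T\}\times K$ yields
$$
\int_T^\infty \norm{t^{\sigma-1/2-\delta}\Psi_H u}^2_{L^2(K)}+\norm{t^{\sigma-1/2-\delta}\nabla\Psi_H u}^2_{L^2(K)}\,dt\ \lesssim\ \E(0)
$$
for every $\sigma\in(0,1/2)$.

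Next, given $\ep>0$, I would choose $\delta>0$ small enough for Corollary~\ref{cor1} to hold and, in addition, with $\delta<\ep$; then I pick $\sigma\in(0,1/2)$ with $\sigma\geq 1/2+\delta-\ep$, which is possible because $1/2+\delta-\ep<1/2$. For this choice $\sigma-1/2-\delta\geq-\ep$, so on $\{t\geq T\}$ (where $T\geq 1$) one has $t^{-\ep}\leq t^{\sigma-1/2-\delta}$, and the displayed estimate bounds the tail
$$
\int_T^\infty \norm{t^{-\ep}\Psi_H u}^2_{L^2(K)}+\norm{t^{-\ep}\nabla\Psi_H u}^2_{L^2(K)}\,dt
$$
by $\E(0)$. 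It then remains to control the bounded-time piece $\int_0^T$. Here I would use that $\Psi_H$ commutes with the wave operator, so $\Psi_H u$ solves the same equation with data $(\Psi_H u_0,\Psi_H v_0)\in H^1\times L^2$; the standard energy estimate gives $\norm{\nabla\Psi_H u(t)}^2_{L^2(X)}\lesssim\E(0)$ uniformly in $t\in[0,T]$, and integrating $\pa_t\Psi_H u$ in $t$ gives $\norm{\Psi_H u(t)}_{L^2(K)}\lesssim(1+t)\E(0)^{1/2}$. Since $t^{-2\ep}$ is integrable near $t=0$, this yields $\int_0^T t^{-2\ep}\big(\norm{\Psi_H u}^2_{L^2(K)}+\norm{\nabla\Psi_H u}^2_{L^2(K)}\big)\,dt\lesssim\E(0)$ with a constant depending on $T$, $\ep$, and $K$. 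Adding the two contributions completes the argument.

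There is no substantive obstacle here beyond Theorem~\ref{thm2} and Corollary~\ref{cor1} themselves. The only points requiring (minor) care are the exponent bookkeeping---arranging $\sigma-1/2-\delta\leq-\ep$ while keeping $\sigma\in(0,1/2)$ and $\delta$ admissibly small---and the behavior as $t\to0$, which is handled purely by the a priori energy bound together with the local integrability of $t^{-2\ep}$.
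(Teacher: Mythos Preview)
Your argument is correct and is precisely what the paper intends: it gives no explicit proof, merely stating that Corollary~\ref{cor1} ``specializes'' to this statement once one restricts to a compact $K\subset X^\circ$, and you have filled in exactly those details (bounded $r$ on $K$, the slab $[T,\infty)\times K\subset\tOmega_\delta$, exponent matching, and energy control on $[0,T]$). Two minor slips: in your closing paragraph you wrote $\sigma-1/2-\delta\leq-\ep$ where you mean $\geq$, and your claim that $t^{-2\ep}$ is integrable near $t=0$ needs $\ep<1/2$, so you should note that without loss of generality $\ep\in(0,1/2)$ (indeed the statement, read literally with the integral from $0$, is already problematic at $t=0$ for $\ep\geq 1/2$, and in any case small $\ep$ is the nontrivial regime).
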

Note, by contrast, that conservation of energy would give the same
estimate with $t^{-\ep}$ replaced by $t^{-1/2-\ep},$ so this estimate
represents an improvement of $t^{-1/2}$ in decay relative to energy
estimates; by contrast, it loses $t^{-\ep}$ relative to
Theorem~\ref{thm2} (which of course only holds for $n\geq 4$).

We emphasize that Theorem~\ref{thm2} and its corollaries can
be proved without the
need for localization at low frequencies so long as we can find a
Morawetz commutant that works \emph{globally}, and in particular this
is the case on small perturbations of Euclidean space.

The proof of Theorem~\ref{thm1} is, as with the usual Morawetz inequality, a ``multiplier''
argument based on the first order differential operator
$$
A_0 = (1/2)(\pa_r - \pa_r^*).
$$
The subtlety is that the crucial commutator term
$$
[\Lap,A_0]
$$
is positive for $r \gg 0,$ in the ends of the manifold, but certainly
not in the interior where there is indeed no natural definition of the
radial function $r$ or of $\pa_r.$ We must thus introduce cutoffs,
which in turn introduce terms in the commutator that have to be
treated as errors.  At high frequency, these errors are disastrous,
but at low frequency, we may employ a Poincar\'e/Hardy inequality to
control them.  Further technical subtleties arise in estimating
remainder terms from the commutator, applied to $\Psi_H u.$ These
estimates are among the principal technical innovations here---see
Proposition~\ref{prop:conjugated-cutoff} below.

The proof of Theorem~\ref{thm2} uses a slightly different commutator
argument (formally a ``commutator'' rather than a ``multiplier'' in
the usual parlance) that occurs in space-time.  The localizer in the
cone $\tOmega$ multiplies a Morawetz-like commutant with a positive
weight in $r$ but decay in $t;$ derivatives of the cutoff are
controlled by the a priori decay assumptions, namely
$\nabla \Psi_H u|_{\tOmega} \in t^\kappa L^2(\RR\times X)$, which in the corollaries
are implied by energy estimates.

Related results have recently been pursued by a number of authors in the
setting of asymptotically Euclidean manifolds.  Bony-H\"afner
\cite{Bony-Haefner1} explored Mourre estimates on asymptotically Euclidean
spaces at low energy, while both Bouclet \cite{Bouclet} and Bony-H\"afner
\cite{Bony-Haefner3} have recently obtain iterated resolvent estimates in
the low frequency regime sufficient to prove strong weighted decay
estimates for asymptotically Euclidean metrics.  These results yield
stronger decay than what is obtained here, but in a narrower class of
geometries and at the cost of a more intricate argument which involves
first proving resolvent estimates; one of the virtues of the techniques
used here is the direct use of the hyperbolic equation, apart from the
estimates necessary to localize at low energy.  More in the spirit of our
approach, Metcalfe-Tataru have proved an analogous theorem to our
Theorem~\ref{thm1} for small perturbations of Euclidean space, and Tataru
\cite{Tataru} has also proved strong energy decay estimates in a class of
$3+1$-dimensional Lorentz metrics that includes the Schwarzschild and Kerr
spacetimes; see also the more recent \cite{MTT}.  (There has been
considerable work on these specific Lorentzian examples motivated by
problems in general relativity; we will not review that literature here.)
The only prior results on low energy estimates in the setting of general
scattering manifolds, in addition to the authors'
\cite{Vasy-Wunsch:Positive}, is the recent work of
Guillarmou-Hassell-Sikora \cite{G-H-S}, which employs a sophisticated
low-energy parametrix construction to obtain strong decay estimates.  Due
to the nature of the construction, the requirements on the metric in
\cite{G-H-S} are more stringent than the long-range assumptions used here.

Many of the low-frequency estimates used in this paper were first
developed by the authors in \cite{Vasy-Wunsch:Positive} following the
results of Bony-H\"afner \cite{Bony-Haefner1} in the asymptotically
Euclidean setting, using rather different methods.

We have discussed both low- and high-frequency estimates above; for
completeness, we remark that {\em intermediate frequencies} (i.e.\
frequencies in any compact subset of $(0,\infty)$) are always
well-behaved in that the solution of the wave equation localized to
these frequencies decays rapidly inside the forward light cone, in
$r/t<c<1$. This can be seen easily both by commutator methods
(especially tools analogous to unreduced 2-body type problems, i.e.\
Mourre estimates as in the work of Mourre \cite{Mourre:Operateurs},
Perry, Sigal and Simon \cite{Perry-Sigal-Simon:Spectral}), Froese and
Herbst \cite{FroMourre}, or directly using the resolvent of the
Laplacian at the continuous spectrum, as described by Hassell and the
first author in \cite{Hassell-Vasy:Spectral, Hassell-Vasy:Resolvent}
and the stationary phase lemma.

\section{Conjugated spectral cutoffs}\label{sec:conjugated}
\subsection{Notation and setting}
Before stating our results on conjugated spectral cutoffs, we very
briefly recall the basic definitions of the b- and scattering
structures on a compact $n$-dimensional manifolds with boundary,
denoted $X$; we refer to \cite{RBMSpec} for more detail.  A boundary
defining function $x$ on $X$ is a non-negative $\CI$ function on $X$
whose zero set is exactly $\pa X$, and whose differential does not
vanish there; a useful example to keep in mind is $x=r^{-1}$ (modified
to be smooth at the origin) in the compactification of an
asymptotically Euclidean space. We recall that $\dCI(X)$, which may
also be called the set of Schwartz functions, is the subset of
$\CI(X)$ consisting of functions vanishing at the boundary with all
derivatives, the dual of $\dCI(X)$ is tempered distributional
densities $\dist(X;\Omega X)$; tempered distributions $\dist(X)$ are
elements of the dual of Schwartz densities, $\dCI(X;\Omega X)$.

Let $\Vf(X)$ be the Lie algebra of all $\CI$ vector fields on $X$;
thus $\Vf(X)$ is the set of all $\CI$ sections of $TX$. In local
coordinates $(x,y_1,\ldots,y_{n-1})$,
$$
\pa_x,\pa_{y_1},\ldots,\pa_{y_{n-1}}
$$
form a local basis for $\Vf(X)$, i.e.\ restrictions of elements
of $\Vf(X)$ to the coordinate chart can be expressed uniquely as
a linear combination of these vector fields with $\CI$ coefficients.
We next define $\Vb(X)$ to be the Lie algebra of $\CI$ vector fields tangent
to $\pa X$; in local coordinates 
$$
x\pa_x,\pa_{y_1},\ldots,\pa_{y_{n-1}}
$$
form a local basis in the same sense. Thus, $\Vb(X)$ is the set
of all $\CI$ sections of a bundle, called the b-tangent bundle of
$X$, denoted $\Tb X$. Finally, $\Vsc(X)=x\Vb(X)$ is the Lie algebra
of scattering vector fields;
$$
x^2\pa_x,x\pa_{y_1},\ldots,x\pa_{y_{n-1}}
$$
form a local basis now. Again, $\Vsc(X)$ is the set of 
all $\CI$ sections of a bundle, called the scattering tangent bundle of
$X$, denoted $\Tsc X$. We write $\Diffb(X)$, resp.\ $\Diffsc(X)$, for the
algebra of differential operators generated by $\Vb(X)$, resp.\ $\Vsc(X)$,
over $\CI(X)$; these are thus finite sums of finite products of vector
fields in $\Vb(X)$, resp.\ $\Vsc(X)$, with $\CI(X)$ coefficients. The dual
bundles of $\Tb X$, resp.\ $\Tsc X$ are $\Tb^*X$ and $\Tsc^*X$, and are called
the b- and the scattering cotangent bundles, respectively; locally they are
spanned by
$$
\frac{dx}{x},\ d_{y_1},\ldots, dy_{n-1},\ \text{resp.}
\ \frac{dx}{x^2},\ \frac{d_{y_1}}{x},\ldots, \frac{dy_{n-1}}{x}.
$$

We let $S^k(X),$ the space of symbols of order $k$, consist of
functions $f$ such that
$$
x^k Lf\in L^\infty(X)\ \text{for all}\ L\in\Diffb(X).
$$
We note, in particular, that
$$
x^\rho\CI(X)\subset S^{-\rho}(X)
$$
since $\Diffb(X)\subset\Diff(X)$.
As $\Diffb(X)$ (a priori acting, say, on tempered distributions)
preserves $S^k(X)$, and one can extend $\Diffb(X)$ and $\Diffsc(X)$
by ``generalizing the coefficients:''
$$
S^k\Diffb^m(X)=\{\sum_j a_j Q_j:\ a_j\in S^k(X),\ Q_j\in\Diffb^m(X)\},
$$
with the sum being locally finite, and defining $S^k\Diffsc^m(X)$
similarly. In particular,
$$
x^k\Diffb^m(X)\subset S^{-k}\Diffb^m(X),
\ x^k\Diffsc^m(X)\subset S^{-k}\Diffsc^m(X).
$$
Then $Q\in S^{k}\Diffsc^m(X)$, $Q'\in S^{k'}\Diffsc^{m'}(X)$ gives
$QQ'\in S^{k+k'}\Diffsc^{m+m'}(X)$, and the analogous statement
for $S^k\Diffb^m(X)$ also holds.

In this paper, we are concerned with \emph{scattering metrics,} that
is to say, smooth metrics on $X^\circ$ that, near the boundary, take
the form
\begin{equation}\label{eq:metric-form}
g= \frac{dx^2}{x^4} + \frac{h}{x^2}+g_1
\end{equation}
with $h=h(x,y,dy)$ a family of metrics on $\pa X,$ and
\begin{equation}\label{eq:g-1-form}
g_1\in S^{-\decay}(X;\Tsc^*X\otimes\Tsc^*X)
\end{equation}
for some $\decay>0$ is symmetric. Thus,
$g$ is a positive definite inner product on the fibers of $\Tsc X$; long-range
metrics like this were considered in \cite{Vasy-Wunsch:Positive}.
We refer to Section~2 and the beginning of Section~3
of \cite{Vasy-Wunsch:Positive} for more details. Note that
substituting $r=x^{-1}$ gives form \ref{scmetric1} described above when $g_1=0$.
We let $L^2(X)$ denote the space of functions square integrable with
respect to the metric density; locally near a point in the boundary,
this is equivalent to using the density
$$
x^{-(n+1)} \,dx\, dy_1 \dots dy_{n-1}.
$$
Note that the vector fields in $\Vsc(X)$ are precisely those with
bounded length with respect to $g.$ Correspondingly, a typical example
of an element of $\Diffsc^2(X)$ is the Laplace-Beltrami operator,
$\Delta_g=d^* d$.  We also permit short range
potential perturbations of $\Delta_g$, namely
\begin{equation}\label{eq:V-form}
V\in S^{-2-\decay}(X),\ \decay>0,\ V\geq 0;
\end{equation}
We assume that $$V\geq 0,$$
though the arguments also work for potentials with small negative
parts; see \cite[Footnote~5]{Vasy-Wunsch:Positive}.

\subsection{Estimates}
In this section we employ some of the results of \cite{Vasy-Wunsch:Positive}
to obtain the following.  Let $\psi\in\CI_c(\RR)$ and
$$
\Psi_H=\psi(H^2(\Lap_g+V)),\qquad \Delta=\Delta_g.
$$
\begin{prop}\label{prop:conjugated-cutoff}
Suppose $n\geq 3$, $g$ and $V$ as in \eqref{eq:metric-form}, \eqref{eq:g-1-form},
\eqref{eq:V-form}.
For $0\leq s$, $0\leq \rho$, $s+\rho<\min(2,n/2)$,
$L\in \Diffsc^1(X)$,
$$
Lx^{s+\rho}\Psi_H x^{-s}\leq CH^{-\rho},\ H>1.
$$
\end{prop}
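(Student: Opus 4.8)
The plan is to bound the operator $L x^{s+\rho}\Psi_H x^{-s}$ by interpolating between two endpoint cases and then exploiting the smoothing provided by $\Psi_H$ at low frequency together with a Hardy-type inequality to absorb the negative powers of $x$. First I would record the basic mapping properties of the spectral cutoff $\Psi_H = \psi(H^2(\Lap_g+V))$ coming from \cite{Vasy-Wunsch:Positive}: namely that $\Psi_H$ is bounded on $L^2(X)$ uniformly in $H$, that $L\Psi_H$ (for $L\in\Diffsc^1(X)$) is bounded by $CH^{-1}$ since $\psi$ is compactly supported and $H^2(\Lap_g+V)\Psi_H$ is bounded, and—crucially—that conjugation of $\Psi_H$ by powers $x^\alpha$ for $\alpha$ in the allowed range $|\alpha|<\min(2,n/2)$ produces bounded operators, with the commutator $[x^\alpha,\Psi_H]x^{-\alpha}$ picking up an extra factor of $H^{-1}$ relative to $\Psi_H$ itself. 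This last point is where the dimensional restriction $s+\rho<n/2$ enters, through the weighted $L^2$ Hardy inequality $\|x^{-\beta}v\|\lesssim \|x^{-\beta}\Lap_g^{\beta/2}v\|$-type bounds (equivalently, $x^{-1}$ is $\Lap_g^{1/2}$-bounded on the relevant weighted spaces only when $\beta<n/2$), and the restriction $s+\rho<2$ reflects the scattering structure, i.e. that $x^2\Lap_g$ rather than $\Lap_g$ is the natural second-order scattering operator.

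The main step is then to write
$$
Lx^{s+\rho}\Psi_H x^{-s} = L\, x^\rho\cdot \big(x^{s}\Psi_H x^{-s}\big) + L\,[x^{s+\rho},\Psi_H]x^{-s}.
$$
For the first term, $x^{s}\Psi_H x^{-s}$ is bounded on $L^2$ uniformly in $H$ by the conjugation estimate (valid since $s<\min(2,n/2)$), and $Lx^\rho$ is a scattering differential operator of order one with a weight; writing $Lx^\rho = x^\rho L + x^\rho[\text{lower order}]$ and pairing the surviving $L\Psi_H$ against the bounded conjugated cutoff gives a gain of $H^{-1}$, hence certainly $H^{-\rho}$ for $0\le\rho\le 1$; for $\rho>1$ one iterates, using that $H^2(\Lap_g+V)\Psi_H$ absorbs two derivatives at a cost of $H^{-2}$, and interpolates. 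For the commutator term, the key identity is that $[x^{s+\rho},\Psi_H]$ is, up to conjugation, of the form $H^2 \Psi_H^{(1)}\cdot(\text{a bounded weighted first-order scattering operator})\cdot$ (plus higher commutators), where $\Psi_H^{(1)}=\psi_1(H^2(\Lap_g+V))$ for a related compactly supported $\psi_1$; each such commutator yields one factor of $H^{-1}$, and the weights are handled by the Hardy inequality exactly as in the proof of Theorem~\ref{thm1} (see the discussion of Poincaré/Hardy inequalities preceding Proposition~\ref{prop:conjugated-cutoff} in the introduction). Thus the commutator contributes at worst $CH^{-1}$, again compatible with the claimed $H^{-\rho}$ once we interpolate between $\rho=0$ (where the bound is just uniform boundedness) and $\rho=1$.

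I would organize the final argument as a two-parameter interpolation: establish the bound at $\rho=0$ (uniform $L^2$-boundedness of $Lx^s\Psi_H x^{-s}$, which is the $H>1$ statement with no decay, following directly from the conjugation and $L\Psi_H = O(H^{-1})$ bounds), and at the "integer" endpoints $\rho=1,2$ (using $H^2(\Lap_g+V)\Psi_H = O(1)$ to trade two derivatives for $H^{-2}$), and then fill in fractional $\rho$ by complex interpolation in the weight, keeping $s$ fixed and moving only $\rho$, so that the constraint $s+\rho<\min(2,n/2)$ is respected throughout. The hard part will be the uniform-in-$H$ control of the conjugated cutoff $x^{s+\rho}\Psi_H x^{-s-\rho}$ and of its commutators with weights near the threshold $s+\rho = \min(2,n/2)$: one must track carefully that the Hardy constant does not blow up, that the error terms from commuting $x^\alpha$ past $\Lap_g$ genuinely live in $S^0\Diffsc^1$ with the right weight (so that the low-frequency localization can absorb them), and that the higher-order terms in the commutator expansion—which a priori involve worse weights $x^{-s-j}$—are in fact controlled because each extra commutator brings both an extra $\Lap_g$-factor (hence an extra $H^{-2}$) and an extra derivative that the Hardy inequality can pair against the negative weight. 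These are precisely the "further technical subtleties" in estimating remainder terms flagged in the introduction, and they are handled by appealing to the weighted estimates of \cite{Vasy-Wunsch:Positive}.
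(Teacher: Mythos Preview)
Your proposal has genuine gaps. First, the displayed decomposition
\[
Lx^{s+\rho}\Psi_H x^{-s} = L\, x^\rho\cdot \big(x^{s}\Psi_H x^{-s}\big) + L\,[x^{s+\rho},\Psi_H]x^{-s}
\]
is not an identity: the first term on the right already equals the left side, so the equation asserts $L[x^{s+\rho},\Psi_H]x^{-s}=0$. Presumably you meant $Lx^{s+\rho}\Psi_H x^{-s} = L\Psi_H x^{\rho} + L[x^{s+\rho},\Psi_H]x^{-s}$, but even then the scheme is circular: the first term $L\Psi_H x^{\rho}$ has the unbounded weight $x^{\rho}$ on the \emph{right}, and controlling it is no easier than the original problem.

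More seriously, you are assuming as input from \cite{Vasy-Wunsch:Positive} precisely the things that have to be proved. That reference gives the conjugated bound $x^{s}\Psi_H x^{-s}=O(1)$ only for $0\le s\le 1$; extending this to $1<s<\min(2,n/2)$ is part of the content of the proposition (the paper says so explicitly just after the statement). Likewise, the assertion that $[x^{\alpha},\Psi_H]x^{-\alpha}$ gains a factor $H^{-1}$ is not a black-box fact about the functional calculus: it requires the Helffer--Sj\"ostrand (Cauchy--Stokes) representation
\[
\Psi_H=\frac{1}{2\pi}\int\overline{\pa}\tilde\psi(z)\,R(z)\,dz\,d\bar z,\qquad R(z)=(H^2(\Delta_g+V)-z)^{-1},
\]
together with \emph{weighted resolvent estimates} of the form $\|x^{s}\nabla_g R(z)\|\lesssim H^{-(1+s)}|\Im z|^{-1}$. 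Those in turn rest on the interpolation-type inequality $\|x^{s}\nabla_g u\|\lesssim \|(\Delta_g+V)u\|^{(1+s)/2}\|u\|^{(1-s)/2}$ (the paper's Lemma~\ref{lemma:weight-up-to-1}), which is where the restriction $s<(n-2)/2$ first enters and which your sketch never supplies. The paper then bootstraps the conjugated estimate to larger $s$ by writing $x^{s}R(z)x^{-s}=R(z)-x^{s}R(z)[H^2(\Delta+V),x^{-s}]R(z)$ and feeding the already-proved weighted bounds back in (Lemma~\ref{lemma:res-small-conj} and Proposition~\ref{prop:res-est-up-to-2}); your ``iterate and interpolate'' paragraph gestures at something like this but without the resolvent it has no engine. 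In short: the missing idea is to pass through the resolvent and prove the weighted estimate $\|x^{s}\nabla_g u\|\lesssim \|(\Delta_g+V)u\|^{(1+s)/2}\|u\|^{(1-s)/2}$; everything else is bookkeeping on top of that.
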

While the spectral cutoff is in the ``(anti-)semi-classical'' operator
$H^2(\Lap_g+V)$ with large parameter, we emphasize that the operator $L$
in the above proposition is simply a differential operator without
parameter.  As first-order scattering differential operators are
spanned over $\CI(X)$ by $x^2\pa_x,$ $\pa_y,$ and the constant
function, to prove the proposition, it suffices to check for these
particular values of $L.$  Equivalently, we can simply use the
vector-valued $L= \nabla_g,$ the gradient with respect to
the scattering metric, as well as $L=1.$

Note that for $\rho=0$ and $s=0$
Proposition~\ref{prop:conjugated-cutoff} follows automatically from
the functional calculus and elliptic regularity; for $\rho=0$ and
$s=1$ this is proved in \cite{Vasy-Wunsch:Positive}, hence it follows
by interpolation for $\rho=0$, $0\leq s\leq 1$. Thus, if $\rho=0$, we
need to deal with $1<s<2$ if $n\geq 4$, and $1<s<3/2$ if $n=3$.

Now, with $\tilde\psi\in\CI_c(\Cx)$ an almost analytic extension of $\psi$, and
$$
R(z)=(H^2(\Delta_g+V)-z)^{-1}
$$
we have
\begin{gather}\label{eq:func-calc}
Lx^{s+\rho}\Psi_Hx^{-s}=\frac{1}{2\pi}\int\overline{\pa}\tilde\psi(z)
Lx^{s+\rho} R(z)x^{-s} \,dz\,d\bar z.
\end{gather}
Thus, we only need to obtain uniform bounds on $Lx^{s+\rho} R(z)x^{-s}$ to prove
the proposition.

We begin with a preliminary lemma, which is an analogue of
\cite[Proposition~4.3]{Vasy-Wunsch:Positive}, where it was proved in
the range $0\leq s<1/2$ for $n\geq 3$.

\begin{lemma}\label{lemma:weight-up-to-1}
For $0\leq s<\min(1,(n-2)/2)$,
$$
\|x^s\nabla_g u\|\leq C\|(\Delta_g+V)u\|^{(1+s)/2}\|u\|^{(1-s)/2}
$$
\end{lemma}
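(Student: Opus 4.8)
The plan is to run a weighted ``multiplier'' (integration by parts) argument: pair $(\Delta_g+V)u$ against $\Phi\bar u$ for a well-chosen nonnegative weight $\Phi$, and then close the estimate by combining the resulting identity with a weighted Hardy inequality and an interpolation step. Working with $u\in\dCI(X)$ (the general case following by density), integration by parts yields
$$
\Re\ang{(\Delta_g+V)u,\Phi u} = \norm{\Phi^{1/2}\nabla_g u}^2 + \tfrac12\int (\Delta_g\Phi)\abs{u}^2\,dg + \int V\Phi\abs{u}^2\,dg .
$$
The aim is to choose $\Phi$ comparable to $x^{2s}$ but with $\Delta_g\Phi\geq 0$ -- indeed $\gtrsim x^{2s+2}$ -- everywhere, so that all three terms on the right are nonnegative. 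Near $\pa X$ one has, using the scattering normal form for $g$,
$$
\Delta_g(x^{2s}) = 2s(n-2-2s)\,x^{2s+2} + O(x^{2s+3}),
$$
whose leading coefficient is \emph{positive} precisely because $s<(n-2)/2$; the trouble is only over the compact part of $X$, where $\Delta_g(x^{2s})$ may change sign.

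To repair this I would set $\Phi=\phi_0+\Delta_g^{-1}g$, with $\phi_0$ a smooth positive function equal to $x^{2s}$ near $\pa X$ and $g\geq 0$ a suitably large multiple of a cutoff supported in $X^\circ$, chosen so that $\Delta_g\phi_0+g\geq c\,x^{2s+2}$ on all of $X$. Here $\Delta_g^{-1}$ denotes the minimal positive inverse: since $n\geq 3$ and $\Delta_g$ has no $L^2$ nullspace (if $\Delta_g u=0$ then $\norm{\nabla_g u}=0$, forcing $u$ constant, hence $0$), the Green's function exists, is nonnegative, and decays like $x^{n-2}$ at $\pa X$. As $n-2>2s$, the correction $\Delta_g^{-1}g$ is $O(x^{2s})$ with gradient $O(x^{2s+1})$ and does not disturb the leading behaviour, so one obtains $c\,x^{2s}\leq \Phi\leq C\,x^{2s}$, $\abs{\nabla_g\Phi}_g\leq C x^{2s+1}$, and $\Delta_g\Phi\geq c\,x^{2s+2}$ throughout $X$. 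This construction is where I expect the real work to lie: one must pin down the asymptotic expansion of $\Delta_g(x^{2s})$ in the long-range geometry and the positivity and decay of the Green's function. An alternative, perhaps closer to the cited Proposition~4.3 of \cite{Vasy-Wunsch:Positive}, is to keep the weight $x^{2s}$ and absorb the compact-part commutator error using the low-energy estimates developed there.

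Given such a $\Phi$, discarding the nonnegative terms yields first
$$
\norm{x^s\nabla_g u}^2 \lesssim \norm{\Phi^{1/2}\nabla_g u}^2 \leq \Re\ang{(\Delta_g+V)u,\Phi u}\leq \norm{(\Delta_g+V)u}\,\norm{x^{2s}u},
$$
and second, integrating the commutator term by parts in the other direction,
$$
c\,\norm{x^{s+1}u}^2 \leq \tfrac12\int(\Delta_g\Phi)\abs{u}^2\,dg \leq \int \abs{\nabla_g\Phi}_g\abs{u}\,\abs{\nabla_g u}_g\,dg \lesssim \norm{x^{s+1}u}\,\norm{x^s\nabla_g u},
$$
that is, the weighted Hardy inequality $\norm{x^{s+1}u}\lesssim \norm{x^s\nabla_g u}$, valid exactly for $s<(n-2)/2$. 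Finally, for $0\leq s<1$ Hölder gives $\norm{x^{2s}u}\leq \norm{u}^{1-\theta}\norm{x^{s+1}u}^{\theta}$ with $\theta=2s/(s+1)\in[0,1)$; inserting the Hardy bound and then the first inequality, and writing $G=\norm{x^s\nabla_g u}$,
$$
G^2 \lesssim \norm{(\Delta_g+V)u}\,\norm{u}^{1-\theta}\,G^{\theta},
$$
so $G^{2-\theta}\lesssim \norm{(\Delta_g+V)u}\,\norm{u}^{1-\theta}$; since $1/(2-\theta)=(1+s)/2$ and $(1-\theta)/(2-\theta)=(1-s)/2$, this is exactly the claimed bound. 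The hypothesis $s<1$ is what makes $\theta$ an admissible interpolation exponent, while $s<(n-2)/2$ is what makes both the sign of the leading commutator and the Hardy inequality available.
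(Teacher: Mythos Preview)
Your argument is correct and arrives at the same exponents as the paper, but the route is genuinely different. The paper does not build a global weight~$\Phi$; instead it quotes two inequalities from \cite{Vasy-Wunsch:Positive} as black boxes---the convexity estimate $\|x^s\nabla_g u\|^2\le C\|(\Delta_g+V)u\|\,\|x^{2s}u\|$ (their~(A.5)) and the generalized Poincar\'e inequality $\|x^{s+\theta}u\|\le C\|x^s\nabla_g u\|^\theta\|x^s u\|^{1-\theta}$ (their Corollary~3.5)---and then closes by applying the latter \emph{twice}: once with $\theta=s$ to bound $\|x^{2s}u\|$ by $\|x^s\nabla_g u\|^s\|x^s u\|^{1-s}$, and once more (at base weight~$0$) to bound $\|x^s u\|$ by $\|\nabla_g u\|^s\|u\|^{1-s}\le C\|(\Delta_g+V)u\|^{s/2}\|u\|^{1-s/2}$. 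Your path instead re-derives both cited inequalities from scratch via the single weight~$\Phi$, and your closing algebra is leaner: one H\"older interpolation $\|x^{2s}u\|\le\|u\|^{1-\theta}\|x^{s+1}u\|^{\theta}$ with $\theta=2s/(s+1)$ followed by one Hardy step suffices, avoiding the paper's second Poincar\'e application. The trade-off is that your Green's-function construction of~$\Phi$ (positivity, $x^{n-2}$ decay, gradient bounds) is only sketched; these facts are standard on scattering manifolds for $n\ge 3$, but you would need to supply or cite them to make the proof complete, whereas the paper's proof is complete modulo the two citations.
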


\begin{proof}
By \cite[Proposition~4.3]{Vasy-Wunsch:Positive}, we may assume $n\geq 4$; then
the range in the lemma is $0\leq s<1$.
By \cite[Equation~(A.5)]{Vasy-Wunsch:Positive}, we have for $0\leq s<(n-2)/2$,
\begin{equation}\label{eq:convexity-est}
\|x^s\nabla_g u\|\leq C\|(\Delta_g+V)u\|^{1/2}\|x^{2s}u\|^{1/2}.
\end{equation} 
Moreover, by \cite[Corollary~3.5]{Vasy-Wunsch:Positive}, we have for
$0<s<(n-2)/2$, $0\leq \theta\leq 1$,
\begin{equation}\label{eq:gen-Poincare}
\|x^{s+\theta} u\|\leq C\|x^s\nabla u\|^\theta\|x^s u\|^{1-\theta}.
\end{equation}
Applying this with $0<\theta=s<1$, we obtain that
$$
\|x^{2s} u\|\leq C\|x^s\nabla_g u\|^s\|x^s u\|^{1-s}.
$$
Substituting into \eqref{eq:convexity-est} yields
\begin{equation}\label{eq:weighted-5}
\|x^s\nabla_g u\|^{1-s/2}\leq C\|(\Delta_g+V)u\|^{1/2}\|x^su\|^{(1-s)/2}.
\end{equation} 
Now using \eqref{eq:gen-Poincare} with $0$ in place of $s$ and $s$ in place of
$\theta$ yields
\begin{equation*}
\|x^{s} u\|\leq C\|\nabla_g u\|^s\|u\|^{1-s}\leq C\|(\Delta_g+V)u\|^{s/2}\|u\|^{1-s/2},
\end{equation*}
with the second inequality arising from
$$
\|\nabla_g u\|^2\leq \|(\Delta_g+V)u\|\,\|u\|,
$$
which is \eqref{eq:convexity-est} with $s=0.$
Substitution into \eqref{eq:weighted-5} yields
\begin{equation}\label{eq:weighted-8}
\|x^s\nabla_g u\|^{1-s/2}\leq C\|(\Delta_g+V)u\|^{1/2+s(1-s)/4}\|u\|^{(1-s)(1-s/2)/2}.
\end{equation}
Since $1/2+s(1-s)/4=(1-s/2)(1+s)/2$, raising both sides to the
power $(1-s/2)^{-1}$ yields
$$
\|x^s\nabla_g u\|\leq C\|(\Delta_g+V)u\|^{(1+s)/2}\|u\|^{(1-s)/2}.
$$
This finishes the proof.
\end{proof}

Now, letting $u=R(z)f$, and writing
$$
(\Delta_g+V)u=H^{-2}(H^2(\Delta_g+V)-z)u+H^{-2}zu,
$$
we deduce from Lemma~\ref{lemma:weight-up-to-1} that for
$0\leq s<\min(2,(n-2)/2)$,
$$
\|x^s\nabla_g R(z)f\|\leq C\|H^{-2}(f+zR(z)f)\|^{(1+s)/2}\|R(z)f\|^{(1-s)/2}.
$$
Since $\|R(z)f\|\leq |\im z|^{-1}\|f\|$, this
gives
\begin{gather}\label{eq:res-est-up-to-1}
\|x^s\nabla_g R(z)f\|\leq CH^{-(1+s)}(1+|z|/|\im z|)^{(1+s)/2}|\im z|^{-(1-s)/2}\|f\|\\
\leq CH^{-(1+s)}|z|^{(1+s)/2}|\im z|^{-1}\|f\|.
\end{gather}
Using the Hardy/Poincar\'e inequality \cite[Proposition 3.4]{Vasy-Wunsch:Positive}, we deduce
the following extension of \cite[Proposition~4.5]{Vasy-Wunsch:Positive} (where
it was proved for $0\leq s<1/2$, i.e.\ the result below is only an improvement
if $n\geq 4$):

\begin{cor}\label{cor:ext-res-bounds}
Suppose $0\leq s<\min(1,(n-2)/2)$, $L\in S^{-1-s}\Diffb^1(X)$. Then
$$
\|LR(z)f\|\leq CH^{-(1+s)}|z|^{(1+s)/2}|\im z|^{-1}\|f\|.
$$
\end{cor}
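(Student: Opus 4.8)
The plan is to deduce the bound from the weighted resolvent estimate \eqref{eq:res-est-up-to-1}, which we have just established, together with the Hardy/Poincar\'e inequality. By linearity it suffices to treat $L = bV$ with $b\in S^{-1-s}(X)$ and $V$ either a b-vector field, $V\in\Vb(X)$, or $V=\Id$: indeed, since $\Tb X$ is a vector bundle over the compact manifold $X$ it is spanned by finitely many global sections $V_i\in\Vb(X)$, so every $Q\in\Diffb^1(X)$ has the form $c_0+\sum_i c_i V_i$ with $c_j\in\CI(X)$, and hence every $L\in S^{-1-s}\Diffb^1(X)$ is a finite sum of terms $bV$ as above, using $S^{-1-s}(X)\CI(X)\subset S^{-1-s}(X)$. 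In particular we may use $|b|\le Cx^{1+s}$ on $X$.

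The key pointwise bound is that every b-vector field has length $\le Cx^{-1}$ with respect to the scattering metric $g$: reading off \eqref{eq:metric-form}--\eqref{eq:g-1-form} in a product end, the local basis fields $x\pa_x$ and $\pa_{y_j}$ all have $g$-length comparable to $x^{-1}$, while in the interior $x$ is bounded below and the statement is trivial. Hence for $V\in\Vb(X)$ we get $|Vu|\le|\nabla_g u|_g\,|V|_g\le Cx^{-1}|\nabla_g u|_g$, so $|bVu|\le Cx^{1+s}\cdot x^{-1}|\nabla_g u|_g=Cx^s|\nabla_g u|_g$; and for $V=\Id$ we get $|bu|\le Cx^{1+s}|u|$. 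Taking $L^2(X,dg)$ norms, $\|Lu\|\le C\|x^s\nabla_g u\|$ in the first case and $\|Lu\|\le C\|x^{1+s}u\|$ in the second. For the latter I would invoke the Hardy/Poincar\'e inequality $\|x^{1+s}u\|\le C\|x^s\nabla_g u\|$, valid for $0\le s<(n-2)/2$, $n\geq3$; for $s>0$ this is \eqref{eq:gen-Poincare} with $\theta=1$, and the endpoint $s=0$ is the usual weighted Hardy inequality $\|xu\|\le C\|\nabla_g u\|$, which is \cite[Proposition~3.4]{Vasy-Wunsch:Positive}. Thus in all cases $\|Lu\|\le C\|x^s\nabla_g u\|$ for $0\le s<\min(1,(n-2)/2)$.

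Finally, setting $u=R(z)f$ and inserting \eqref{eq:res-est-up-to-1}, which is valid in precisely this range of $s$, yields $\|LR(z)f\|\le C\|x^s\nabla_g R(z)f\|\le CH^{-(1+s)}|z|^{(1+s)/2}|\im z|^{-1}\|f\|$, as claimed. I do not anticipate a genuine obstacle here once \eqref{eq:res-est-up-to-1} is available; the only point requiring care is the weight bookkeeping---checking that the $x^{1+s}$ decay carried by a symbol of order $-1-s$ cancels exactly the $x^{-1}$ growth of a b-vector field, leaving the weight $x^s$ that appears in \eqref{eq:res-est-up-to-1}---together with handling the $s=0$ endpoint of the Hardy/Poincar\'e inequality separately, since \eqref{eq:gen-Poincare} degenerates there.
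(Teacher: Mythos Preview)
Your proof is correct and is essentially the same argument the paper intends: the paper's proof is the single sentence ``Using the Hardy/Poincar\'e inequality \cite[Proposition~3.4]{Vasy-Wunsch:Positive}, we deduce\ldots'', and you have simply unpacked this by decomposing $L$ into its vector-field and zeroth-order parts, observing $\Vb(X)\subset x^{-1}\Vsc(X)$ pointwise, and then invoking \eqref{eq:res-est-up-to-1} for the first-order part and Hardy/Poincar\'e followed by \eqref{eq:res-est-up-to-1} for the zeroth-order part. The weight bookkeeping and the separate treatment of $s=0$ are handled correctly.
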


We can extend the range of $s$ when $L$ is zero'th order by interpolating
the case $s=0$ above, namely
$$
\|xR(z)f\|\leq CH^{-1}|z|^{1/2}|\im z|^{-1}\|f\|,
$$
with the estimate $\|R(z)f\|\leq |\im z|^{-1}\|f\|$ to obtain the following result
(in which the range $1\leq s<\min(2,n/2)$ is a special case of
Corollary~\ref{cor:ext-res-bounds}, and it is the range $0\leq s\leq
1$ that follows from the interpolation).

\begin{cor}\label{cor:ext-res-bounds-0th}
Suppose $0\leq s<\min(2,n/2)$. Then
$$
\|x^sR(z)f\|\leq CH^{-s}|z|^{s/2}|\im z|^{-1}\|f\|.
$$
\end{cor}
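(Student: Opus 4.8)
The plan is to split the range $0\le s<\min(2,n/2)$ into the two pieces $1\le s<\min(2,n/2)$ and $0\le s\le 1$, treating the first by a direct appeal to Corollary~\ref{cor:ext-res-bounds} and the second by complex interpolation against the trivial resolvent bound. For the first piece, note that if $1\le s<\min(2,n/2)$ then, using the bookkeeping identity $1+\min(1,(n-2)/2)=\min(2,n/2)$, the exponent $s-1$ lies in $[0,\min(1,(n-2)/2))$, and $x^s=x\cdot x^{s-1}\in S^{-(s-1)}(X)\subset S^{-1-(s-1)}\Diffb^1(X)=S^{-s}\Diffb^1(X)$. Hence Corollary~\ref{cor:ext-res-bounds}, applied with weight exponent $s-1$ and $L=x^s$, gives exactly $\|x^sR(z)f\|\le CH^{-s}|z|^{s/2}|\im z|^{-1}\|f\|$.

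For the range $0\le s\le 1$, I would interpolate between the endpoint $s=0$, where one has the trivial resolvent bound $\|R(z)f\|\le|\im z|^{-1}\|f\|$ already used above, and the endpoint $s=1$, where Corollary~\ref{cor:ext-res-bounds} with weight exponent $0$ and $L=x\in S^{-1}\Diffb^1(X)$ gives $\|xR(z)f\|\le CH^{-1}|z|^{1/2}|\im z|^{-1}\|f\|$. Concretely, for fixed $f,g\in L^2(X)$ consider $\Phi(\zeta)=\langle x^\zeta R(z)f,g\rangle$ on the strip $0\le\re\zeta\le1$, where $x^\zeta=e^{\zeta\log x}$ is defined a.e.\ (away from $\pa X$). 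Since $x$ is bounded on the compact manifold $X$, $|x^\zeta|=x^{\re\zeta}$ is uniformly bounded on the strip, so $\Phi$ is analytic in the interior, continuous up to the boundary, and bounded there. On $\re\zeta=0$ the multiplication operator $x^{it}$ is unitary on $L^2(X)$, so $|\Phi(it)|\le\|R(z)f\|\,\|g\|\le|\im z|^{-1}\|f\|\,\|g\|$; on $\re\zeta=1$, writing $x^{1+it}R(z)f=x^{it}(xR(z)f)$ and using unitarity of $x^{it}$ again gives $|\Phi(1+it)|\le\|xR(z)f\|\,\|g\|\le CH^{-1}|z|^{1/2}|\im z|^{-1}\|f\|\,\|g\|$. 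The Hadamard three-lines theorem then yields, at $\re\zeta=s$,
$$|\langle x^sR(z)f,g\rangle|\le C^sH^{-s}|z|^{s/2}|\im z|^{-1}\|f\|\,\|g\|,$$
and taking the supremum over $\|g\|\le1$ gives the claimed bound (absorbing $C^s$ into $C$, which is harmless since $s<2$). Equivalently one could invoke Stein's interpolation theorem for the analytic family $T_\zeta=x^\zeta R(z)$, with bounded admissible growth.

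I do not expect any genuine obstacle here: all the substantive analysis — the convexity, Poincar\'e and Hardy inequalities from \cite{Vasy-Wunsch:Positive} and the weighted resolvent bound $\|x^s\nabla_g R(z)f\|\le CH^{-(1+s)}|z|^{(1+s)/2}|\im z|^{-1}\|f\|$ — is already packaged into Lemma~\ref{lemma:weight-up-to-1} and Corollary~\ref{cor:ext-res-bounds}. The only points that deserve a modicum of care are the identity $1+\min(1,(n-2)/2)=\min(2,n/2)$ that makes the two ranges dovetail, the observation that $x^s\in S^{-s}\Diffb^1(X)$ so that Corollary~\ref{cor:ext-res-bounds} genuinely applies throughout the first range, and the verification of the (elementary) hypotheses of the three-lines lemma — analyticity and uniform boundedness of $\Phi$ on the strip — which are immediate since $x$ is bounded on $X$ and $x^{it}$ is unitary on $L^2(X)$.
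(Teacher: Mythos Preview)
Your approach is correct and essentially identical to the paper's: split into $1\le s<\min(2,n/2)$, where the bound is a special case of Corollary~\ref{cor:ext-res-bounds}, and $0\le s\le 1$, where one interpolates between the trivial resolvent bound and the $s=1$ endpoint $\|xR(z)f\|\le CH^{-1}|z|^{1/2}|\im z|^{-1}\|f\|$. The paper simply says ``interpolating'' without further comment, while you have written out the three-lines argument explicitly; the only cosmetic wrinkle is your line $x^s=x\cdot x^{s-1}\in S^{-(s-1)}(X)$, which is true but misleadingly weak --- the relevant fact is just $x^s\in S^{-s}(X)\subset S^{-s}\Diffb^1(X)$, which is what Corollary~\ref{cor:ext-res-bounds} (with weight exponent $s-1$) requires.
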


As we are interested in the functional
calculus for compactly supported functions,
we suppress the large $z$ behavior, and work with $z$ in a compact set.
As a consequence of the preceding two corollaries we conclude:

\begin{cor}\label{cor:ext-res-bounds-sc}
For $0\leq s<\min(2,n/2)$, $L\in S^{-s}\Diffsc^1(X)$, $z$ in a compact set,
\begin{equation}\label{eq:ext-res-bounds-sc}
\|LR(z)f\|\leq CH^{-s}|\im z|^{-1}\|f\|,
\end{equation}
uniformly in $z$ and $H\geq 1$.
\end{cor}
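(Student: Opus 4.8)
The plan is to split a general element $L\in S^{-s}\Diffsc^1(X)$ into a zeroth-order piece and a genuinely first-order piece, estimate the two pieces using Corollary~\ref{cor:ext-res-bounds-0th} and Corollary~\ref{cor:ext-res-bounds} respectively, and use that $z$ ranges over a fixed compact set to discard all powers of $|z|$.

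First I would record the relevant bit of symbol algebra: since $\Vsc(X)=x\Vb(X)$, we have $\Diffsc^1(X)=\CI(X)+x\Diffb^1(X)$, hence $S^{-s}\Diffsc^1(X)\subset S^{-s}(X)+xS^{-s}\Diffb^1(X)\subset S^{-s}(X)+S^{-1-s}\Diffb^1(X)$. So I would write $L=L_0+L_1$ with $L_0\in S^{-s}(X)$ acting as multiplication and $L_1\in S^{-1-s}\Diffb^1(X)$. (As noted after Proposition~\ref{prop:conjugated-cutoff}, one could instead just verify the bound for $L=a$ and for $L=a\nabla_g$ with $a\in S^{-s}(X)$, using $\nabla_g\in x\Diffb^1(X)$ for the latter; this is the same computation.)

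For the zeroth-order part, since $x^{-s}\in S^{s}(X)$ the product $L_0x^{-s}$ lies in $S^0(X)\subset L^\infty(X)$, so $\|L_0R(z)f\|\leq C\|x^sR(z)f\|$, and Corollary~\ref{cor:ext-res-bounds-0th} (applicable since $s<\min(2,n/2)$) bounds the right-hand side by $CH^{-s}|z|^{s/2}|\im z|^{-1}\|f\|$, where $|z|^{s/2}$ is bounded on the compact $z$-set. For the first-order part, I would fix any $s_0$ with $\max(0,s-1)\leq s_0<\min(1,(n-2)/2)$; such $s_0$ exists because $s<\min(2,n/2)=1+\min(1,(n-2)/2)$, which forces $s-1<\min(1,(n-2)/2)$ when $s>1$, while for $s\leq1$ one may simply take $s_0=0$. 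Since $s_0\leq s$, we have $L_1\in S^{-1-s}\Diffb^1(X)\subset S^{-1-s_0}\Diffb^1(X)$, so Corollary~\ref{cor:ext-res-bounds} with parameter $s_0$ yields $\|L_1R(z)f\|\leq CH^{-(1+s_0)}|z|^{(1+s_0)/2}|\im z|^{-1}\|f\|$; again the power of $|z|$ is bounded on the compact set, and since $1+s_0\geq s$ and $H\geq1$ we have $H^{-(1+s_0)}\leq H^{-s}$. Summing the two contributions gives \eqref{eq:ext-res-bounds-sc}, with the constant uniform in $z$ over the compact set and in $H\geq1$, as it is in the two corollaries invoked.

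The only point that needs care — really the heart of the argument rather than an obstacle — is that Corollary~\ref{cor:ext-res-bounds} reaches b-operators only of order $-1-s_0$ with $s_0<\min(1,(n-2)/2)$, a strictly narrower range than $s<\min(2,n/2)$. The device that bridges this is to trade the surplus decay of $L_1\in S^{-1-s}\Diffb^1(X)$ (relative to $S^{-1-s_0}\Diffb^1(X)$ with $s_0<s$) against a correspondingly worse — but, since $1+s_0\geq s$, still acceptable — power of $H$; this works precisely on the range $s<\min(2,n/2)$ and uses only $H\geq1$ together with the fact, already exploited around \eqref{eq:func-calc}, that it suffices to control $z$ in a compact set.
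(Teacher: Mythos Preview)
Your proof is correct and essentially the same as the paper's: both split $L$ into a zeroth-order piece handled by Corollary~\ref{cor:ext-res-bounds-0th} and a first-order piece handled by Corollary~\ref{cor:ext-res-bounds}, then discard powers of $|z|$ using compactness. The only cosmetic difference is that the paper treats the case $s\geq 1$ without decomposing (since then $L\in S^{-s}\Diffsc^1(X)\subset S^{-1-(s-1)}\Diffb^1(X)$ already, so Corollary~\ref{cor:ext-res-bounds} applies directly with parameter $s-1$), and only performs the splitting $L=aV+b$ for $0\leq s<1$; your uniform choice of an auxiliary $s_0$ with $\max(0,s-1)\leq s_0\leq s$ and $s_0<\min(1,(n-2)/2)$ accomplishes the same thing in one stroke.
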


\begin{proof}
If $1\leq s$, then $L\in S^{-s}\Diffsc^1(X)\subset S^{-s}\Diffb^1(X)$ and
Corollary~\ref{cor:ext-res-bounds} proves the result.

On the other hand, if $0\leq s<1$, one can write $L=aV+b$, $a,b\in S^{-s}(X)$,
$V\in\Vsc(X)=x\Vb(X)$, so $aV\in S^{-1-s}\Diffb^1(X)$, so
Corollary~\ref{cor:ext-res-bounds} gives
$$
\|aVR(z)f\|\leq CH^{-1-s}|\im z|^{-1}\|f\|.
$$
Furthermore, Corollary~\ref{cor:ext-res-bounds-0th} yields
$$
\|bR(z)f\|\leq CH^{-s}|\im z|^{-1}\|f\|.
$$
Combining these two estimates
proves the corollary.
\end{proof}

Now we consider conjugates of $R(z)$. Below sometimes
$n=3$ and $n\geq 4$ are treated
separately because of the stronger limitations $n=3$ imposes in the preceding
results: $\min(2,n/2)=3/2$, rather than $2$, in that case.
First we prove a result for conjugation by small powers of $x$ (namely, $x^{s}$,
$0\leq s \leq 1$); then in Proposition~\ref{prop:res-est-up-to-2} we
increase the range. Roughly speaking, the argument of
Proposition~\ref{prop:res-est-up-to-2} reduces the weight by $1$ (and could
be done inductively if Corollaries~\ref{cor:ext-res-bounds}
and \ref{cor:ext-res-bounds-sc} had been proved
for an extended range of weights); Lemma~\ref{lemma:res-small-conj} deals
with the fractional part of the weight (and thus would be the base case in the
inductive argument).

\begin{lemma}\label{lemma:res-small-conj}
Suppose $n\geq 3$. Suppose  $0\leq\foobar\leq 1$,
$0\leq \rho$, $\foobar+\rho<\min(2,n/2)$, and if $n=3$ then in addition $\rho<1$.
For $L\in \Diffsc^1(X)$,
and $z$ in a compact set,
\begin{equation}\label{eq:res-est-8}
\|Lx^{\rho+\foobar} R(z)x^{-\foobar}\|\leq CH^{-\rho}|\im z|^{-2}
\end{equation}
uniformly as $H\to +\infty$.
\end{lemma}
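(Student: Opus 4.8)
The plan is to deduce the asserted bound on the weighted, conjugated resolvent from the weighted resolvent estimates already established --- Corollaries~\ref{cor:ext-res-bounds}, \ref{cor:ext-res-bounds-0th} and \ref{cor:ext-res-bounds-sc} --- by means of the conjugation identity
\begin{equation}\label{eq:conj-plan}
x^{\rho+\foobar}R(z)x^{-\foobar}=x^{\rho}R(z)-H^{2}\,x^{\rho+\foobar}R(z)\,[\Delta_g,x^{-\foobar}]\,R(z),
\end{equation}
which follows from $R(z)x^{-\foobar}-x^{-\foobar}R(z)=-R(z)\bigl[H^{2}(\Delta_g+V)-z,\;x^{-\foobar}\bigr]R(z)$ together with the fact that the multiplication operator $x^{-\foobar}$ commutes with $V$ and with $z$. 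Applying $L$ on the left of \eqref{eq:conj-plan}, it suffices to bound $Lx^{\rho}R(z)$ and $H^{2}Lx^{\rho+\foobar}R(z)[\Delta_g,x^{-\foobar}]R(z)$.

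The first is immediate: $Lx^{\rho}\in S^{-\rho}\Diffsc^{1}(X)$ and $\rho\le\foobar+\rho<\min(2,n/2)$, so Corollary~\ref{cor:ext-res-bounds-sc} gives $\|Lx^{\rho}R(z)\|\le CH^{-\rho}|\im z|^{-1}$, which already dominates the right-hand side. For the second term the key computational observation is that, since $d(x^{-\foobar})$ has only a $\frac{dx}{x^{2}}$-component, $[\Delta_g,x^{-\foobar}]$ is a first order operator of the form $x^{2-\foobar}P$ with $P\in\Diffb^{1}(X)$ having $S^{0}(X)$ coefficients --- its principal part a multiple of $x^{2-\foobar}(x\pa_x)$, its zeroth order part a multiple of $x^{2-\foobar}$ --- plus contributions of $g_1$ carrying additional decay; in particular $[\Delta_g,x^{-\foobar}]\in S^{\foobar-2}\Diffb^{1}(X)=S^{-1-(1-\foobar)}\Diffb^{1}(X)$, while also $[\Delta_g,x^{-\foobar}]\in S^{\foobar-1}\Diffsc^{1}(X)$.

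When $\foobar=0$ there is nothing to do, the commutator vanishing. When $0<\foobar\le1$ and $1-\foobar<\min(1,(n-2)/2)$ --- which holds in particular for every $n\ge4$, and also for $n=3$ with $\foobar>1/2$ --- I would factor the second term as $(Lx^{\rho+\foobar}R(z))\bigl([\Delta_g,x^{-\foobar}]R(z)\bigr)$ and estimate: Corollary~\ref{cor:ext-res-bounds-sc} applied to $Lx^{\rho+\foobar}\in S^{-(\rho+\foobar)}\Diffsc^{1}(X)$ (admissible as $\rho+\foobar<\min(2,n/2)$) gives $\|Lx^{\rho+\foobar}R(z)\|\le CH^{-(\rho+\foobar)}|\im z|^{-1}$, and Corollary~\ref{cor:ext-res-bounds} applied to $x^{2-\foobar}P\in S^{-1-(1-\foobar)}\Diffb^{1}(X)$ gives $\|[\Delta_g,x^{-\foobar}]R(z)\|\le CH^{-(2-\foobar)}|\im z|^{-1}$. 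Multiplying and restoring the $H^{2}$ yields $CH^{\,2-(\rho+\foobar)-(2-\foobar)}|\im z|^{-2}=CH^{-\rho}|\im z|^{-2}$, as required.

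The remaining case --- $0<\foobar\le1$ with $1-\foobar\ge\min(1,(n-2)/2)$, which for $n\ge3$ forces $n=3$ and $\foobar\le1/2$ --- is where the genuine difficulty lies, and is the step I expect to be the main obstacle: here the weight $x^{2-\foobar}$ (with $2-\foobar\ge3/2=\min(2,n/2)$) is too large to be absorbed by a single resolvent, so it must be split between the two. Writing $[\Delta_g,x^{-\foobar}]=x^{c}\,(x^{2-\foobar-c}P)$ gives
\[
Lx^{\rho+\foobar}R(z)[\Delta_g,x^{-\foobar}]R(z)=\bigl(Lx^{\rho+\foobar}R(z)x^{c}\bigr)\bigl((x^{2-\foobar-c}P)R(z)\bigr),
\]
where $c$ is chosen with $1/2-\foobar<c\le1-\foobar$ and $c<3/2-(\rho+\foobar)$ --- a range that is nonempty precisely because $\rho<1$ and $\foobar+\rho<3/2$. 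The right factor is then controlled by $CH^{-(2-\foobar-c)}|\im z|^{-1}$ via Corollary~\ref{cor:ext-res-bounds} (applicable now, since $1-\foobar-c<1/2$), and the left factor by $CH^{-(\rho+\foobar+c)}|\im z|^{-1}$ via a two-sided weighted resolvent estimate $\|M R(z)x^{b}\|\le CH^{-(\alpha+b)}|\im z|^{-1}$, valid for $M\in S^{-\alpha}\Diffsc^{1}(X)$ and $b\ge0$ with $\alpha$, $b$, $\alpha+b$ all in $[0,\min(2,n/2))$. This two-sided estimate I would obtain from the one-sided bounds of Corollaries~\ref{cor:ext-res-bounds-0th} and \ref{cor:ext-res-bounds-sc} (and the low-energy resolvent analysis of \cite{Vasy-Wunsch:Positive}) by complex interpolation in the weights. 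Multiplying the two factors and restoring $H^{2}$ once more gives exactly $CH^{-\rho}|\im z|^{-2}$; the hypotheses $\rho<1$ and $\foobar+\rho<3/2$ are precisely what keeps every weight occurring in this step --- namely $\rho+\foobar$, $c$, $\rho+\foobar+c$, $2-\foobar-c$ and $1-\foobar-c$ --- inside the ranges $[0,\min(2,n/2))$, resp.\ $[0,\min(1,(n-2)/2))$, demanded by the corollaries.
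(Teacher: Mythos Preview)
Your treatment of the cases $\foobar=0$ and $0<\foobar\le1$ with $1-\foobar<\min(1,(n-2)/2)$ (equivalently $n\ge4$, or $n=3$ with $\foobar>1/2$) is correct and coincides with the paper's argument: the conjugation identity \eqref{eq:conj-plan}, Corollary~\ref{cor:ext-res-bounds-sc} for $Lx^{\rho}R(z)$ and $Lx^{\rho+\foobar}R(z)$, and Corollary~\ref{cor:ext-res-bounds} for $[\Delta,x^{-\foobar}]R(z)$.

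The gap is in the residual case $n=3$, $0<\foobar\le1/2$. Your splitting $[\Delta,x^{-\foobar}]=x^{c}\cdot(x^{2-\foobar-c}P)$ hinges on a two-sided bound $\|MR(z)x^{b}\|\le CH^{-(\alpha+b)}|\Im z|^{-1}$ for $M\in S^{-\alpha}\Diffsc^{1}(X)$, which you say follows by complex interpolation from the one-sided corollaries. It does not, at least not with that $H$-power. For the zeroth-order part the interpolation in $\zeta\mapsto x^{(\alpha+b)\zeta}R(z)x^{(\alpha+b)(1-\zeta)}$ works, but when a scattering vector field sits on the left the endpoint $\zeta=0$, namely $VR(z)x^{\alpha+b}$, carries no $H$-gain (only $\|VR(z)\|\le C|\Im z|^{-1}$), and interpolation then yields only $H^{-\alpha}$, not $H^{-(\alpha+b)}$. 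Commuting $x^{b}$ back to the left fails as well: $[\Delta,x^{b}]\in S^{-(2+b)}\Diffb^{1}$ lies outside the range of Corollary~\ref{cor:ext-res-bounds} when $n=3$, so one is forced to throw away decay and the $H^{2}$ factor is not fully absorbed. Nothing in the corollaries or in \cite{Vasy-Wunsch:Positive} provides this two-sided estimate with the claimed $H$-dependence; without it your factorization has no force.

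The paper's route is much simpler and sidesteps the issue: having \eqref{eq:res-est-8} for $\foobar=0$ and for every $\foobar'\in(1/2,1)$ with $\rho+\foobar'<3/2$ (such $\foobar'$ exists precisely because $\rho<1$), one interpolates the analytic family $\zeta\mapsto Lx^{\rho+\zeta}R(z)x^{-\zeta}$ in $\foobar$ between $0$ and $\foobar'$. The hypothesis $\rho<1$ for $n=3$ is used exactly here, and the restriction is explained in the paper as arising from the nonexistence of an admissible $\foobar'$ once $\rho\ge1$.
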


\begin{proof}
The case $\foobar=0$ follows from Corollary~\ref{cor:ext-res-bounds-sc}.
Note that the conditions imply $0\leq \rho<2$ if $n\geq 4$.

If $n=3$, assume that $1/2<\foobar\leq 1$; if $n\geq 4$ no additional assumption
is made.
Then
\begin{gather*}
L x^{\foobar+\rho} R(z) x^{-\foobar}=Lx^{\rho} R(z)+Lx^{\foobar+\rho}[R(z),x^{-\foobar}]\\
=Lx^{\rho} R(z)-Lx^{\foobar+\rho}R(z)[H^2(\Delta+V),x^{-\foobar}]R(z).
\end{gather*}
For $L\in\Diffsc^1(X)$, by Corollary~\ref{cor:ext-res-bounds-sc}, as $0\leq \rho<\min(2,n/2)$,
\begin{gather*}
\|Lx^\rho R(z)\|\leq CH^{-\rho}|\im z|^{-1}
\end{gather*}
As for the second term
on the right hand side, we note that
$[\Delta+V,x^{-\foobar}]\in x^{2-\foobar}\Diffb^1(X).$  Consequently, we may employ
Corollary~\ref{cor:ext-res-bounds}, using $0<\foobar\leq 1$ (so $1\leq 2-\foobar<2$)
if $n\geq 4$, resp.\ $1/2<\foobar\leq 1$ (so $1\leq 2-\foobar<3/2$) if
$n=3$.  This yields
$$
\|[\Delta+V,x^{-\foobar}]R(z)\|\leq CH^{-(2-\foobar)}|\im z|^{-1}.
$$
On the other hand, as $0\leq \foobar+\rho<\min(2,n/2)$,
by Corollary~\ref{cor:ext-res-bounds-sc},
$$
\|Lx^{\foobar+\rho}R(z)\|\leq CH^{-(\foobar+\rho)}|\im z|^{-1},
$$
so
$$
\|Lx^{\foobar+\rho}R(z)[H^2(\Delta+V),x^{-\foobar}]R(z)\|\leq CH^{-\rho}|\im z|^{-2}.
$$
Combining these results proves \eqref{eq:res-est-8}
if $0<\foobar\leq 1$ and $n\geq 4$, and also if $1/2<\foobar\leq 1$ and $n=3$,
and completes
the proof in these cases.

It remains to deal with $n=3$, and $0<\foobar\leq 1/2$.
This follows by interpolation between $\foobar=0$ and
$\foobar>1/2$, noting that given $0\leq \rho<1$ we can take
$\foobar'\in(1/2,1)$ such that $\rho+\foobar'<3/2$; we then interpolate between $0$ and
this value $\foobar'$. This last step is the reason for the restriction $\rho<1$; if $\rho\geq 1$,
the desired $\foobar'$ does not exist. This completes the proof of the Lemma.
\end{proof}

We now extend the range of allowable exponents $s$ by an inductive argument:

\begin{prop}\label{prop:res-est-up-to-2}
Suppose $n\geq 3$.
Suppose $\rho\geq 0$, $L \in\Diffsc^1(X)$. For $0\leq s\leq s+\rho<\min(2,n/2)$,
\begin{equation}\label{eq:res-est-up-to-3-halves}
\|Lx^{s+\rho} R(z) x^{-s}\|\leq CH^{-\rho}|\im z|^{-4}.
\end{equation}
\end{prop}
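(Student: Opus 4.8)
\emph{Proof proposal.} I would split according to the size of $s$. For $0\le s\le 1$ the assertion is Lemma~\ref{lemma:res-small-conj}, whose $|\im z|^{-2}$ is even stronger than the claimed $|\im z|^{-4}$ since $z$ runs over a compact set. So assume $1<s<\min(2,n/2)$. Commuting $x^{-s}$ past the resolvent via $R(z)x^{-s}=x^{-s}R(z)-H^2R(z)[\Delta_g+V,x^{-s}]R(z)$, and using the structural fact (already invoked in the proof of Lemma~\ref{lemma:res-small-conj}) that $[\Delta_g+V,x^{-s}]\in x^{2-s}\Diffb^1(X)$---which for $1<s<2$ we factor as $x^{-(s-1)}P$ with $P\in x\Diffb^1(X)\subset S^{-1}\Diffb^1(X)$---one obtains
\[
Lx^{s+\rho}R(z)x^{-s}=Lx^{\rho}R(z)-H^2\bigl(Lx^{s+\rho}R(z)x^{-(s-1)}\bigr)\bigl(PR(z)\bigr),
\]
the product on the right being a genuine composition, joined between $x^{-(s-1)}$ and $P$.

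I would bound the three pieces using only results already in hand. Since $Lx^{\rho}\in S^{-\rho}\Diffsc^1(X)$ and $\rho<\min(2,n/2)$, Corollary~\ref{cor:ext-res-bounds-sc} gives $\|Lx^{\rho}R(z)\|\le CH^{-\rho}|\im z|^{-1}$. Since $P\in S^{-1}\Diffb^1(X)$, Corollary~\ref{cor:ext-res-bounds} with its exponent equal to $0$ (permitted for every $n\ge3$) gives $\|PR(z)\|\le CH^{-1}|\im z|^{-1}$; this is where the single power of $x$ carried by the commutator is converted into a factor $H^{-1}$. Finally $Lx^{s+\rho}R(z)x^{-(s-1)}=Lx^{(1+\rho)+(s-1)}R(z)x^{-(s-1)}$ is a conjugation of $R(z)$ by $x^{s-1}$ with $s-1\le1$, hence exactly of the form covered by Lemma~\ref{lemma:res-small-conj} with the roles of its two parameters played by $s-1$ and $1+\rho$ (note $(s-1)+(1+\rho)=s+\rho<\min(2,n/2)$); this yields $\|Lx^{s+\rho}R(z)x^{-(s-1)}\|\le CH^{-(1+\rho)}|\im z|^{-2}$. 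Multiplying, the potentially disastrous $H^2$ cancels exactly, $H^2\cdot H^{-1}\cdot H^{-(1+\rho)}=H^{-\rho}$, while the powers of $|\im z|$ combine to at most $|\im z|^{-3}$; together with the first term, and $|\im z|\le C$ on the compact set, this gives the estimate (indeed with $|\im z|^{-3}$ in place of $|\im z|^{-4}$ for $n\ge4$).

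The step I expect to be the real obstacle is precisely this bookkeeping of powers of $H$: the commutator produces a \emph{growing} factor $H^2$, and the argument above uses the available resolvent bounds at full strength, with no slack. This becomes binding for $n=3$, where $\min(2,n/2)=3/2$ so that the weighted bound of Corollary~\ref{cor:ext-res-bounds} saturates at weight $3/2$ (the Hardy/Poincar\'e exponent $(n-2)/2$ being only $1/2$), and where moreover Lemma~\ref{lemma:res-small-conj} carries the additional restriction that its second parameter be $<1$, which is violated by the $1+\rho$ above. For $n=3$ the cases left open by Lemma~\ref{lemma:res-small-conj}---namely $1<s<3/2$, and $0\le s<1/2$ with $1\le\rho<3/2-s$---I would therefore treat by a more delicate variant of the same scheme: discard a small part of the decay in the commutators that fall outside the range of Corollary~\ref{cor:ext-res-bounds}, perform one further commutator expansion, and interpolate against $s=1$, in the spirit of the $n=3$ portion of the proof of Lemma~\ref{lemma:res-small-conj}.
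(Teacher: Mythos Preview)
For $n\ge 4$ your argument is the paper's: both commute $x^{-s}$ through the resolvent and insert the weight $x^{s-1}$ between the two resulting factors (the paper writes this as the choice $\rho'=s-1$), then apply Lemma~\ref{lemma:res-small-conj} to the left factor and Corollary~\ref{cor:ext-res-bounds} to the right, with the same cancellation of $H$-powers.

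For $n=3$ you correctly locate the obstruction, but the proposed fix does not close. A second commutator expansion of $R(z)x^{-(s-1)}$ produces $[\Delta+V,x^{-(s-1)}]\in x^{3-s}\Diffb^1$; since $3-s>3/2$ exceeds the $n=3$ ceiling of Corollary~\ref{cor:ext-res-bounds}, the excess weight is wasted, the best available power from that factor is $H^{-(3/2-\epsilon)}$, and however one distributes weights between the two new factors the $H$-bookkeeping falls short by essentially $3/2-s>0$. ``Discarding a small part of the decay'' in the first commutator has the same effect, and interpolation against $s=1$ is of no use without first establishing the result at some $s>1$.

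The paper's fix is not iteration but a different balance of the \emph{single} split: for $1\le s<3/2$ take $\rho'=s/2$ instead of $s-1$. Then the right factor $x^{s/2}[\Delta+V,x^{-s}]\in x^{2-s/2}\Diffb^1$ has weight in $(5/4,3/2)$, inside the $n=3$ range of Corollary~\ref{cor:ext-res-bounds}; and the left factor $Lx^{s+\rho}R(z)x^{-s/2}$ falls under Lemma~\ref{lemma:res-small-conj} with second parameter $\rho+s/2$, which is $<1$ because $s\ge 1$ together with $s+\rho<3/2$ forces $\rho<1/2$. The $H$-powers still cancel to $H^{-\rho}$. The remaining $n=3$ cases are then completed by two interpolations: first between $s=0$ and $s=1$ at fixed $\rho<1/2$, and then along lines of fixed $s+\rho$ between $s=0$ (Corollary~\ref{cor:ext-res-bounds-sc}) and $\rho=0$.
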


\begin{proof}
If $s=0$, the result follows from Corollary~\ref{cor:ext-res-bounds-sc}.
If $n\geq 4$, $0<s<1$, it follows from Lemma~\ref{lemma:res-small-conj}.

Assume
now that $1\leq s<\min(2,n/2)$. If $n=3$, let $\rho'=s/2$; if $n\geq 4$, let
$\rho'=s-1$. In either case, $0\leq \rho'<1$; if $n=3$ then in addition $\rho'>1/2$.
Let $$L'=Lx\rho \in x^\rho \Diffsc^1(X).$$
Write
\begin{gather*}
L'x^{s}R(z) x^{-s}=L' R(z)+L'x^{s}[R(z),x^{-s}]\\
=L' R(z)-\left(L'x^{s}R(z)x^{-\rho'}\right)
\left(x^{\rho'}[H^2(\Delta+V),x^{-s}]R(z)\right).
\end{gather*}
First, by Corollary~\ref{cor:ext-res-bounds-sc} since $\rho'<\min(n/2,2),$
$$
\|L'R(z)\|\leq CH^{-\rho'}|\im z|^{-2}.
$$
Next, we apply \eqref{eq:res-est-8},  with $s=\rho'$ and
$\rho$ replaced by $\rho+s-\rho'$.  The hypotheses are satisfied as
$\rho'+(\rho+s-\rho') = \rho+s<\min(n/2,2)$ and
as $0\leq \rho+s-\rho'<1$ if $n=3$ since $\rho+s<3/2$, $\rho'=s/2$
and $s\geq 1$, resp.\ $0\leq \rho+s-\rho'=\rho+1$ if $n\geq 4$,
$$
\|L'x^{s}R(z)x^{-\rho'}\|\leq CH^{-s-\rho+\rho'}|\im z|^{-2}.
$$
On the other hand,
$x^{\rho'}[\Delta+V,x^{-s}]\in x^{2-s+\rho'}\Diffb^1(X)$, so
by Corollary~\ref{cor:ext-res-bounds}
(taking into account that if $n\geq 4$, $2-s+\rho'=1$, and if
$n=3$ then $1\leq 2-s+\rho'<3/2$),
\begin{gather*}
\|x^{\rho'}[(\Delta+V),x^{-s}]R(z)\|\leq CH^{-2+s-\rho'}|\im z|^{-2}.
\end{gather*}
Combining these results, we have shown that
$$
\|L'x^{s}R(z) x^{-s}\|\leq CH^{-\rho}|\im z|^{-4},
$$
completing the proof if $1\leq s<\min(2,n/2)$. If $n\geq 4$, we already covered
the case of $s<1$ in Lemma~\ref{lemma:res-small-conj}, hence if $n\geq 4$, the proof is complete.

So suppose that $n=3$.
If $0<s<1$, and $0\leq \rho<1/2$, then one can interpolate between $s=0$ and $s=1$
with the same (fixed) value of $\rho$ to obtain \eqref{eq:res-est-up-to-3-halves},
completing the proof if $0\leq \rho<1/2$ in the full range of $s$, $0\leq s<3/2$
(subject to $s+\rho'<3/2$). Finally, with $0\leq s+\rho'<3/2$ fixed, one can interpolate
between $s=0$ and $\rho=0$ to obtain the full result.
\end{proof}

In view of \eqref{eq:func-calc}, Proposition~\ref{prop:res-est-up-to-2}
proves Proposition~\ref{prop:conjugated-cutoff}.

We need a second, much less delicate, property of spectral cutoffs. Note that
here we do not need to work with low energies: the cutoff $\psi$ is fixed, and
there is no $H$ in the statement of the proposition. (If we worked
with $\psi(H^2(\Delta+V))$, $H$ could be traced through the argument given
below to yield polynomially growing bounds in $H$, i.e.\ one can gain decay in $t$
at the cost of losing powers of $H$.)

\begin{prop}\label{prop:localized-spectral-cutoff}
Suppose that $\phi,\chi\in\CI_c(\RR)$ and $\supp(1-\chi)\cap\supp\phi=\emptyset$,
$\psi\in\CI_c(\RR)$. Also let $r=x^{-1}$.
Then for $L\in\Diffsc^1(X)$, $N\in\NN$,
$$
\|L\phi(r/t)\psi(\Delta+V)(1-\chi(r/t))\|_{\cL(L^2,L^2)}\leq C_N t^{-N},\ t\geq 1.
$$
\end{prop}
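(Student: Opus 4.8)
The statement asserts almost-orthogonality between a spatial cutoff $\phi(r/t)$ localized in $r\lesssim \delta t$ and its complement $1-\chi(r/t)$ localized in $r\gtrsim \delta' t$, with a spectral cutoff $\psi(\Delta+V)$ sandwiched between them. Since $\Delta+V$ is a second-order \emph{scattering} operator, it is ``almost local'' at the relevant scale: commuting $x^{-1}=r$ through it costs powers of $x$, and because the supports of $\phi(r/t)$ and $1-\chi(r/t)$ are separated by a factor of $t$ in $r$, each such commutation should produce a gain of $t^{-1}$. The natural tool to make this quantitative is a Helffer--Sj\"ostrand representation of $\psi(\Delta+V)$, exactly as in \eqref{eq:func-calc} but now without the semiclassical parameter.

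\emph{Step 1: Helffer--Sj\"ostrand formula.} Write, with $\tilde\psi\in\CI_c(\Cx)$ an almost analytic extension of $\psi$ and $\tilde R(z)=(\Delta_g+V-z)^{-1}$,
$$
\psi(\Delta+V)=\frac{1}{2\pi}\int\overline{\pa}\tilde\psi(z)\,\tilde R(z)\,dz\,d\bar z,
$$
so it suffices to bound $\|L\,\phi(r/t)\,\tilde R(z)\,(1-\chi(r/t))\|$ by $C_N |\Im z|^{-N'} t^{-N}$ uniformly for $z$ in a fixed compact set, since $\overline{\pa}\tilde\psi$ vanishes to infinite order at the real axis and absorbs the resulting negative powers of $|\Im z|$.

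\emph{Step 2: Iterated commutators with a weight separating the supports.} Choose a smooth function $a(r/t)$ that is $\equiv 1$ on $\supp\phi$ and $\equiv 0$ on $\supp(1-\chi)$; then $\phi(r/t)=\phi(r/t)\,a(r/t)^N$ for every $N$, while $a(r/t)(1-\chi(r/t))=0$. Insert $a(r/t)^N$ between $\phi(r/t)$ and $\tilde R(z)$ and commute the factors $a(r/t)$ through $\tilde R(z)=\tilde R(z)(\Delta+V-z)\tilde R(z)$ one at a time, producing a sum of terms of the form $(\text{bounded})\cdot\tilde R(z)\,[\Delta+V,a(r/t)]\,\tilde R(z)\,[\Delta+V,a(r/t)]\cdots$ acting finally on $(1-\chi(r/t))$. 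Each commutator $[\Delta+V,a(r/t)]$ is a \emph{first-order} scattering operator whose coefficients are supported where $a'(r/t)\ne0$, i.e.\ in a region with $r\sim t$, and there those coefficients carry a factor $x\sim t^{-1}$ (from $[\Delta,\cdot]$ hitting a function of $x^{-1}/t$: $x^2\pa_x$ applied to $a(x^{-1}/t)$ gives something of size $t^{-1}$, and the second-order term contributes $x^2\cdot t^{-2}$, also $O(t^{-1})$ on $r\sim t$). Using boundedness of $\Vsc$-vector fields on $L^2$ together with $\|\tilde R(z)\|\le |\Im z|^{-1}$ and $\|\tilde R(z)\,\nabla_g\| \lesssim |\Im z|^{-1}$ (from $\|\nabla_g u\|^2\le\|(\Delta+V)u\|\,\|u\|$ as in \eqref{eq:convexity-est} with $s=0$, applied to $u=\tilde R(z)f$ and $(\Delta+V)\tilde R(z)=\Id+z\tilde R(z)$), each commutator contributes a gain of $t^{-1}$ and a loss of a fixed power of $|\Im z|^{-1}$. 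The leftmost factor $L$ is absorbed using $L\,\tilde R(z)\in\Diffsc^1\cdot\tilde R(z)$, again bounded by $C|\Im z|^{-1}$. After $N$ commutators one arrives at a bound $C_N\,t^{-N}\,|\Im z|^{-(N+2)}$, which is what Step 1 needs.

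\emph{Main obstacle.} The routine but slightly delicate point is bookkeeping the supports and weights in the iterated commutators: one must choose a \emph{nested} family of cutoffs $a=a_0,a_1,\dots,a_N$, each identically $1$ on the support of the previous and each vanishing on $\supp(1-\chi)$, so that at every stage the commutator's coefficients are supported in $\{r\sim t\}$ (hence carry the $t^{-1}$ gain) and so that the final resolvent still hits $1-\chi(r/t)$ with a genuinely vanishing product. Keeping track that $[\Delta+V,a_j(r/t)]$ genuinely lies in $x\,\Diffb^1$ with coefficients of size $O(t^{-1})$ on that support — i.e.\ that both the first- and second-order pieces of the commutator gain the full power of $t$ — is the one place where a short explicit computation in the coordinates $(x,y)$ is required; everything else is soft mapping-property bookkeeping of the type already carried out in the proof of Lemma~\ref{lemma:res-small-conj} and Proposition~\ref{prop:res-est-up-to-2}.
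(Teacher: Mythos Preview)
Your approach is essentially the same as the paper's: represent $\psi(\Delta+V)$ by the Helffer--Sj\"ostrand/Cauchy--Stokes formula, then gain powers of $t^{-1}$ by repeatedly commuting a cutoff in $r/t$ through the resolvent, using that $[x^2\pa_x,a(r/t)]=-t^{-1}a'(r/t)$ and that the resulting first-order scattering operators satisfy $\|A\tilde R(z)\|\lesssim|\Im z|^{-1}$.

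Two remarks on the bookkeeping. First, the nested family $a_0,\dots,a_N$ is unnecessary: the paper simply commutes $\phi(r/t)$ itself through $R(z)$, obtaining $R(z)\,[\Delta+V,\phi(r/t)]\,R(z)(1-\chi(r/t))$, and observes via \eqref{eq:wave-space-commutator-right} that the commutator can be written with new cutoffs $\tilde\phi_k(r/t)$ on the right whose supports lie in $\supp d\phi\subset\supp\phi$, hence are again disjoint from $\supp(1-\chi)$. Thus the supports \emph{shrink} under iteration and one never needs to manufacture room between $\supp\phi$ and $\supp(1-\chi)$. Second, the remark that the commutator's coefficients live where $r\sim t$ and ``carry a factor $x\sim t^{-1}$'' is a red herring: the identity $[x^2\pa_x,a(r/t)]=-t^{-1}a'(r/t)$ produces the $t^{-1}$ gain directly by the chain rule, regardless of where $a'$ is supported (and the second-order piece gives $t^{-2}a''$, not $x^2t^{-2}$). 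This matters because in the application (Section~\ref{sec:local}) $\phi$ is supported near $r/t=0$, so $r\sim t$ need not hold on $\supp\phi$; fortunately the argument does not require it.
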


\begin{proof}
We first remark that
$$
[x^2\pa_x,\phi(1/(xt))]=-t^{-1}\phi'(1/(xt)),
$$
while $xD_{y_j}$ and elements of $\CI(X)$ commute with the multiplication operator
by $\phi(1/(xt))$.
Thus, by an inductive argument, for $Q\in\Diffsc^m(X)$, we have
\begin{equation}\label{eq:wave-space-commutator}
[Q,\phi(1/(xt))]=\sum_{|\alpha|+j\leq m-1}\sum_k f_{j,\alpha,k}(t) \phi_{j,\alpha,k}(1/(xt)) 
a_{j,\alpha,k}(x^2 D_x)^j (xD_y)^\alpha
\end{equation} 
where the sum over $k$ is finite,
$f_{j,\alpha,k}\in S^{-1}([1,\infty))$ (i.e.\ is $\CI$ and is a symbol of
order $-1$ at infinity),
$a_{j,\alpha,k}\in\CI(X)$, $\phi_{j,\alpha,k}\in\CI_c(\RR)$ and $\supp\phi_{j,\alpha,k}
\subset\supp d\phi$.
Equivalently, we may put all of the factors $\phi_{j,\alpha}(1/(xt))$ on the right
(at the cost of changing these factors as well as the other coefficients), so
\begin{equation}\label{eq:wave-space-commutator-right}
[Q,\phi(1/(xt))]=\sum_{|\alpha|+j\leq m-1}\sum_k\tilde f_{j,\alpha,k}(t)
\tilde a_{j,\alpha,k}(x^2 D_x)^j (xD_y)^\alpha  \tilde\phi_{j,\alpha,k}(1/(xt)),
\end{equation}
with the tilded objects having the same properties as the untilded ones above.

For each $t\geq 1$ we may write,
$L\phi(r/t)=\phi(r/t) L+[L,\phi(r/t)]$ and use that for any $Q\in\Diffsc^m(X)$,
$Q\psi(\Delta+V)$ is bounded on $L^2$ (by elliptic regularity), so
an immediate consequence of \eqref{eq:wave-space-commutator}
is that
$L\phi(r/t)\psi(\Delta+V)(1-\chi(r/t))$ is a bounded operator on $L^2$ which
is uniformly bounded in $t\geq 1$;
we now must obtain improved (decaying) bounds in $t$.

To do so, we work with the resolvent $R(z)=(\Delta+V-z)^{-1}$, $\im z\neq 0$,
and note that
if $\rho\in\CI_c(\RR)$ and $1-\chi$ have disjoint support, so $\rho(1-\chi)=0$,
then commuting $\rho$ through the resolvent,
\begin{equation}\begin{split}\label{eq:localizer-expansion}
\rho(1/(xt))R(z)(1-\chi(1/(xt)))&=-[R(z),\rho(1/(xt))](1-\chi(1/(xt)))\\
&=R(z)[\Delta+V,\rho(1/(xt))]R(z)(1-\chi(1/(xt))).
\end{split}\end{equation}
By \eqref{eq:wave-space-commutator-right}, this has the form
\begin{equation}\label{eq:localizer-expansion-detail}
\sum_{|\alpha|+j\leq 1}\sum_k f_{j,\alpha,k}(t) R(z) 
a_{j,\alpha,k}(x^2 D_x)^j (xD_y)^\alpha \rho_{j,\alpha,k}(1/(xt)) R(z) (1-\chi(1/(xt))),
\end{equation}
with $\supp\rho_{j,\alpha,k}\subset\supp d\rho$. Thus,
$\rho_{j,\alpha,k}(1/(xt)) R(z) (1-\chi(1/(xt)))$ is of the same form as the
left hand side of
\eqref{eq:localizer-expansion}, so \eqref{eq:localizer-expansion} can
be further expanded by expanding these last three factors
in \eqref{eq:localizer-expansion-detail} as in
\eqref{eq:localizer-expansion}. Note that as we expand, each time
we obtain a factor like $f_{j,\alpha,k}\in S^{-1}([1,\infty))$; these commute with
all other factors, and after $N$ iterations, the product of these is in $S^{-N}([1,\infty))$.
Inductively, doing $N$ iterations,
we deduce that $\phi(1/(xt))R(z)(1-\chi(1/(xt)))$ is a sum of terms
of the form
\begin{equation}\label{eq:t-loc-exp}
f(t) R(z)A_1 R(z) A_2\ldots R(z) A_N \tilde\phi(1/(xt)) R(z)(1-\chi(1/(xt))),
\end{equation}
with $A_j\in\Diffsc^1(X)$, $\tilde\phi\in\CI_c(\RR)$ with $\supp\tilde\phi\subset
\supp d\phi$, and $f\in S^{-N}([1,\infty))$, and one can also interchange
$A_N$ and $\tilde\phi(1/(xt))$ if convenient (at the cost of obtaining different
operators in the same class).

But $\|A_j R(z)\|\leq C|\im z|^{-1}$ for $z$ in a compact set, and
similarly for $LR(z)$ when $L\in\Diffsc^1(X)$ (see
Corollary~\ref{cor:ext-res-bounds-sc} with $H$ fixed) so using that
$\tilde\phi$ and $1-\chi$ are uniformly bounded in $\sup$ norm, hence
as bounded operators on $L^2$, we deduce that for $L\in\Diffsc^1(X)$,
$$
\|L\phi(1/(xt))R(z)(1-\chi(1/(xt)))\|\leq Ct^{-N}|\im z|^{-N-1}.
$$
The Cauchy-Stokes formula, with $\tilde\psi\in\CI_c(\Cx)$ an almost
analytic extension of $\psi$,
\begin{equation}\begin{split}\label{eq:func-calc-2}
&L\phi(1/(xt))\psi(\Delta+V) (1-\chi(1/(xt)))\\
&=\frac{1}{2\pi}\int\overline{\pa}\tilde\psi(z)
L\phi(1/(xt))R(z) (1-\chi(1/(xt))) \,dz\,d\bar z,
\end{split}\end{equation}
now immediately proves the proposition.
\end{proof}

In fact, it is useful to add a weight in $x$ as well:

\begin{prop}\label{prop:weighted-localized-spectral-cutoff}
Suppose that $\phi,\chi\in\CI_c(\RR)$ and $\supp(1-\chi)\cap\supp\phi=\emptyset$,
$\psi\in\CI_c(\RR)$. Also let $r=x^{-1}$.
Then for $L\in\Diffsc^1(X)$, $N\in\NN$, $m\in\NN$
$$
\|Lr^m\phi(r/t)\psi(\Delta+V)(1-\chi(r/t))r^m\|_{\cL(L^2,L^2)}\leq C_N t^{-N},\ t\geq 1.
$$
\end{prop}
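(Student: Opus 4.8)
The plan is to reduce everything to Proposition~\ref{prop:localized-spectral-cutoff} by absorbing the two extra weights $r^m=x^{-m}$. I would work throughout on the dense subspace $\dCI(X)$ of $L^2(X)$ — on which the weights $x^{-m}$ and the resolvents $R(z)=(\Delta+V-z)^{-1}$, $\im z\neq 0$, act unambiguously, the latter preserving $\dCI(X)$ — and establish the claimed operator bound there. The left weight I expect to be essentially free: since $r/t$ ranges over a fixed compact set on $\supp\phi(r/t)$, one has $r^m\phi(r/t)=t^m\hat\phi(r/t)$ with $\hat\phi(\sigma)=\sigma^m\phi(\sigma)\in\CIc(\RR)$ supported in $\supp\phi$ (hence still disjoint from $\supp(1-\chi)$), so up to a harmless power of $t$ the left weight merely replaces $\phi$ by $\hat\phi$. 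The substantive point is the right weight $r^m$, which is \emph{not} a bounded operator on $L^2(X)$; I would deal with it by commuting it leftwards through the resolvents in the Helffer--Sj\"ostrand formula for $\psi(\Delta+V)$.

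Concretely: write $\psi(\Delta+V)=\frac{1}{2\pi}\int\overline{\pa}\tilde\psi(z)R(z)\,dz\,d\bar z$ with $\tilde\psi\in\CIc(\Cx)$ an almost analytic extension of $\psi$, note that $r^m$ commutes with $1-\chi(r/t)$, and move $x^{-m}$ to the left past $R(z)$ via $[R(z),x^{-k}]=-R(z)[\Delta+V,x^{-k}]R(z)$. The key structural input is that $[\Delta+V,x^{-k}]\in x^{2-k}\Diffb^1(X)\subset x^{1-k}\Diffsc^1(X)$ (using $x\Diffb^1(X)\subset\Diffsc^1(X)$): so each commutation either carries $x^{-k}$ one resolvent further left, or produces a new term in which the weight order has dropped by exactly one, at the cost of one extra resolvent and one extra factor in $\Diffsc^1(X)$. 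Iterating, after at most $m$ such steps every surviving weight has order $\leq 0$ (is bounded), and one arrives at an identity on $\dCI(X)$,
\begin{equation*}
\psi(\Delta+V)\,x^{-m}=\sum_{j=0}^{m-1}\pm\,x^{j-m}\,\Psi_j+\Psi_{\mathrm{rem}},
\end{equation*}
where $\Psi_0=\psi(\Delta+V)$ and each remaining $\Psi_j$, $\Psi_{\mathrm{rem}}$ equals $\frac{1}{2\pi}\int\overline{\pa}\tilde\psi(z)P(z)\,dz\,d\bar z$ for a product $P(z)$ of at most $m+1$ copies of $R(z)$ and at most $m$ elements of $\Diffsc^1(X)$. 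These $z$-integrals converge because $\overline{\pa}\tilde\psi$ vanishes to infinite order on $\RR$, while $\|P(z)\|$ is $O(|\im z|^{-(m+1)})$ for $z$ in a compact set by $\|R(z)\|\leq|\im z|^{-1}$ and Corollary~\ref{cor:ext-res-bounds-sc} (taken with $H=1$).

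Feeding this back in, the $j$-th contribution to $Lr^m\phi(r/t)\psi(\Delta+V)(1-\chi(r/t))r^m$ is $Lr^m\phi(r/t)x^{j-m}\Psi_j(1-\chi(r/t))$, and since $r^m\phi(r/t)x^{j-m}=r^{2m-j}\phi(r/t)=t^{2m-j}\psi_j(r/t)$ with $\psi_j(\sigma)=\sigma^{2m-j}\phi(\sigma)\in\CIc(\RR)$, $\supp\psi_j\subset\supp\phi$, it remains to bound $\|L\psi_j(r/t)P(z)(1-\chi(r/t))\|$ (and its analogue for $\Psi_{\mathrm{rem}}$, with $\hat\phi$ in place of $\psi_j$) by $C_Nt^{-N}|\im z|^{-c(N)}$ with $c(N)$ polynomial, uniformly over the compact $z$-set: the $z$-integral against $\overline{\pa}\tilde\psi$ then gives $C_Nt^{-N}$, and the losses $t^{2m-j}\leq t^{2m}$ are absorbed by enlarging $N$. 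This last estimate is exactly Proposition~\ref{prop:localized-spectral-cutoff} with the single resolvent there replaced by the product $P(z)$, and I would prove it by the same argument: commute $\psi_j(r/t)$ rightwards through $P(z)$ — each commutator $[\Delta+V,\psi_j(1/(xt))]$ or $[L',\psi_j(1/(xt))]$, $L'\in\Diffsc^1(X)$, gains a factor in $S^{-1}([1,\infty))$ (so $O(t^{-1})$) and keeps the new localizers supported in $\supp d\psi_j\subset\supp\phi$; a localizer meeting $1-\chi(r/t)$ on the right gives zero by disjointness of supports; after $N$ rounds one has gained $t^{-N}$ with the resolvent count having grown only linearly in $N$; and what remains is $O(|\im z|^{-c(N)})$ via $\|R(z)\|,\|L'R(z)\|\lesssim|\im z|^{-1}$ together with the uniform $L^2$-boundedness of the cutoffs.

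I expect the main obstacle to be precisely the control of the right weight: a priori $\psi(\Delta+V)(1-\chi(r/t))r^m$ does not even map $L^2\to L^2$, and the argument hinges on the fact that commuting $x^{-m}$ past a resolvent loses only \emph{one} power (because $[\Delta+V,x^{-k}]\in x^{2-k}\Diffb^1(X)$), so that the weight-reduction terminates after finitely many steps, after which the leftover weight either is bounded or, once it reaches the $\phi(r/t)$-localizer, costs only a fixed power of $t$ — harmless against the rapid decay coming, as in Proposition~\ref{prop:localized-spectral-cutoff}, from the disjointness of $\supp\phi$ and $\supp(1-\chi)$.
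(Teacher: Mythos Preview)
Your proposal is correct and uses the same ingredients as the paper: the left weight $r^m$ is absorbed via $r\lesssim t$ on $\supp\phi$, the right weight $x^{-m}$ is handled by commuting it through resolvents (each commutation $[\Delta+V,x^{-k}]\in x^{2-k}\Diffb^1(X)$ gaining one power of $x$), and the rapid $t$-decay comes from the localizer-commutation iteration of Proposition~\ref{prop:localized-spectral-cutoff}. The only difference is organizational: the paper first runs the $t^{-N}$ iteration (reusing \eqref{eq:t-loc-exp} verbatim, with $x^{-m}$ carried along on the right), leaving $x^{-m}$ attached only to the rightmost resolvent, and then invokes the single bound $\|Qx^mR(z)x^{-m}\|\lesssim|\im z|^{-m-1}$ (itself proved by your $x$-commutation); you instead commute $x^{-m}$ through $\psi(\Delta+V)$ first---producing products $P_j(z)$ of up to $m+1$ resolvents---and then re-run the $t^{-N}$ argument for each product. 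The paper's ordering is marginally more economical in that it avoids re-proving the localizer iteration for resolvent products, but the two arguments are essentially the same.
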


\begin{proof}
First, the $r^m=x^{-m}$ in front of $\psi(\Delta+V)$ is harmless since on the support
of $\phi$ it is bounded by $t^m$, so it can be absorbed into $t^{-N}$ on the
right hand side if we increase $N.$  Moreover, for each $t$,
$Qx^{m}\psi(\Delta+V)x^{-m}$
is bounded for $Q\in\Diffsc^1(X)$,
as is immediate from the scattering calculus of Melrose as
$\psi(\Delta+V)$ is a pseudodifferential operator of order $-\infty$.
However, there is a simple direct argument: via the Cauchy-Stokes formula
this reduces to a boundedness statement for $Qx^{m}R(z)x^{-m}$. This in turn
is shown by
commuting $x^{-m}$ through $R(z)$, much as we commuted $\phi(r/t)$
above, in this case gaining a power of $x$ (instead of $t^{-1}$) each time
we do a commutation, so in $m$ steps we reduce to a product of $m+1$ resolvents
and bounded functions, giving a bound
\begin{equation}\label{eq:weighted-res-bd}
\|Qx^{m}R(z)x^{-m}\|\leq C|\im z|^{-m-1}.
\end{equation}
Thus, as in the previous proposition, we only
need to prove the decaying uniform estimate
in $t$. (Strictly speaking, one has to regularize, replacing $x^{-m}$ by
$(1+\ep x^{-1})^{-m} x^{-m}$, which is bounded for $\ep>0$, and let $\ep\to 0$, in
which case the commutator argument presented above gives uniform control of the terms.)

We thus proceed as above, so by
\eqref{eq:t-loc-exp} we only need to prove that
$$
A_N\tilde\phi(1/(xt)) R(z)(1-\chi(1/(xt)))x^{-m}
$$
is $L^2$ bounded. As in the fixed $t$ setting,
it is convenient to insert a factor of $x^m$ on the left,
using that $x^{-m}\lesssim t^m$ on the support of $\tilde\phi$, and thus,
commuting $\tilde\phi$ through $A_N$, and using the uniform boundedness of
$\tilde\phi$ and $1-\chi$ in the $\sup$ norm, we are reduced to showing that
$A_N' x^m R(z)x^{-m}$
is bounded (with a polynomial bound in $|\im z|^{-1}$).
But this is exactly the content of \eqref{eq:weighted-res-bd}, completing the
proof.
\end{proof}

\section{Global Morawetz estimate}\label{sec:Morawetz}

In this section, we return to the notation $r=1/x$ to facilitate
comparisons with the usual method of Morawetz estimates. To simplify the
computation, we assume
$g_1=0$ with the notation of \eqref{eq:metric-form}--\eqref{eq:g-1-form}. Then,
in the paragraph of \eqref{eq:g-1-reinstate} we reinstate arbitrary $g_1$ (with
$\decay>0$).
Thus in the present notation, we have
\begin{equation}\label{eq:warped-product}
g=g_0= dr^2+r^2 h(1/r,y,dy)
\end{equation}
with $r \in (r_0,\infty).$
Then we compute
\begin{multline}\label{eq:pa-r-adjoint}
\pa_r^* =-\pa_r-\pa_r(\log (r^{n-1} \sqrt h)) =-\pa_r-\frac{n-1}r-\frac
12 \pa_r \log h(1/r)\\ =-\pa_r-\frac{n-1}r+e\in r^{-1}\Diffb^1(X)
\end{multline}
with $e \in S^{-2}(X).$

We as usual may write, in the ends of $X$ where $r \gg 0,$
$$
\Lap_g = \pa_r^*\pa_r + r^{-2} \Lap_Y\in r^{-2}\Diffb^2(X)
$$
where $\Lap_Y$ is the family of Laplacians on the boundary at infinity
defined by the family of metrics $h(1/r,\bullet).$

Now in general, for $F(r) \in \CI([0,\infty))$ we let
\begin{equation}\label{As}A_F= (1/2) (F(r)\pa_r-\pa_r^*F(r)),\end{equation}
which is thus skew-adjoint.
The principal symbol of the commutator
of $\Lap_g$ with $A_F$ is straightforward to compute, and one deduces that
$[\Lap,A_F]$ and $2\pa_r^*F'(r)\pa_r+2r^{-3}F(r)\Lap_Y$ have the same
principal symbol, so their difference is first order.
Since they are both formally self-adjoint and real, the same holds for the
difference, which is thus zero'th order, and can be computed by applying
it to the constant function $1$:
\begin{equation*}\begin{aligned}
\left([\Lap,A_F]-(2\pa_r^*F'(r)\pa_r+2r^{-3}F(r)\Lap_Y)\right)1&=\Lap_g A_F 1\\
&=-\frac 12 \Lap_g \pa_r^* (F(r))\\
\end{aligned}\end{equation*}

We note some particular instances of this computation.  Let
$\cutoff(r)$ be a function on $\RR$ such that
$$
\cutoff (r)=\begin{cases}
0 & r\leq 1/2\\
1 & r \geq 4,
\end{cases}
$$
and
$$
\cutoff'\geq 0,\quad \cutoff'(r)\geq 1/4 \text{ for } 1\leq r \leq 2.
$$
Let $\Lambda \gg 0$ and $\varkappa$ be a small constant.
We compute the following as zero'th order terms for these various cases:
$$\begin{aligned}
F(r) =r^{s} &\Longrightarrow
-\frac 12 \Lap_g \pa_r^*
(F(r))=\frac{n-1}{2}\,(s-1)(n+s-3)r^{s-3}+O(r^{s-4}),\\
F(r) =(1-1/\log r) &\Longrightarrow
-\frac 12 \Lap_g \pa_r^* (F(r))=(1+O(1/\log r))\frac{(n-1)(n-3)}{2r^{3}}\\
F(r) =\varkappa \,\cutoff(r/\Lambda) &\Longrightarrow -\frac 12 \Lap_g \pa_r^*
(F(r))=\varkappa \,O(r^{-3})
\end{aligned}$$
(with the ``big-Oh'' term in the last case being uniformly bounded as
$\Lambda \to \infty$).

Thus in particular, we may let $F(r)=r^s$ and (abusing notation) denote
$A_F$ by $A_s$ in this case.  Then
\begin{equation}\label{comm1}
[\Lap_g+V, A_s] 
= 2s\pa_r^*r^{s-1}\pa_r+2r^{s-3}\Lap_Y-\frac{n-1}{2}\,(s-1)(n+s-3)r^{s-3}
+ r^{s-4}Q_Y+f,
\end{equation}
where $Q_Y$ denotes an operator of second order involving only
derivatives in $Y$ with coefficients in $S^0(X)$ and $f \in S^{s-3-\decay}(X).$
The two principal terms have the same (non-negative!) sign if $s\geq 0$, with
the sign being definite if $s>0$; if $n\geq 3$,
the zero'th order term has the same sign if in addition $s\leq 1$; the sign
is definite if $s<1$, and in case $n=3$, $s>0$.

On the other hand, in dealing with radial derivatives we will employ $A_F$
with $F(r) = (1-1/\log r).$  We let $\pA$ denote the operator in this case,
and compute
$$
\pA = (1/2) \bigg( \big( 1-\frac{1}{\log r} \big) A+ A\big(
1-\frac{1}{\log r} \big) \bigg),
$$
where
$$
A \equiv A_0  = \frac 12 \big( \pa_r -\pa_r^*\big).
$$
Then we have
\begin{equation}\label{comm-a}
[\Lap_g+V, \pA] 
= 2\pa_r^*\frac{1}{r(\log r)^{2}}\pa_r+(1+e_1)2r^{-3}\Lap_Y+(1+e_2)\frac{(n-1)(n-3)}{2r^{3}}
+ r^{-4}Q_Y,
\end{equation}
with $e_i =O((\log r)^{-1})$ a symbol in $S^0(X).$
(We have recycled the notation $Q_Y$ to denote different operators of the same form.)

Finally, in obtaining $\ell^\infty$--$\ell^1$ bounds, we will employ
$$
F(r) = 1- \frac 1 {\log r} + \varkappa \,\cutoff\big(\frac r\Lambda\big);
$$
with $\varkappa$ a (small) positive constant.  Let $\ppA$ denote the corresponding operator.  Thus
\begin{equation}\begin{split}\label{comm-ppa}
[\Lap_g+V, \ppA] 
= &2\pa_r^*\big( \frac{\varkappa}{\Lambda} \cutoff'(r/\Lambda)+ \frac{1}{r(\log
  r)^{2}}  \big)\pa_r\\
&\qquad+(1+e_1)2r^{-3}\Lap_Y+(1+e_2)\frac{(n-1)(n-3)}{2r^{3}}
+ r^{-4}Q_Y,
\end{split}\end{equation}
with $e_1 =O((\log r)^{-1})$ a symbol in $S^0(X)$ and $e_2 =O((\log
r)^{-1})+\varkappa\, O(1)$ a symbol in $S^0(X),$ bounded by $1/2$ when $r \gg
0$ and $\varkappa \ll 1.$

We now return to arbitrary metric $g=g_0+g_1$, with $g_0$ of the warped
product form \eqref{eq:warped-product}. We write $\Delta_{g_0}$ for the Laplacian
of $g_0$ and $B^{*,0}$ for the adjoint of an operator $B$
with respect to $g_0$, while $B^*$ is written for the adjoint with respect to $g$.
Thus, in \eqref{eq:pa-r-adjoint}--\eqref{comm-a}, all adjoints are of the ${}^{*,0}$ type,
and all Laplacians are that of $g_0$. Since $A_s$ and $\pA$ depend on the
metric via the adjoints, we write $A_{s,0}$ and $\pA_0$ for the $g_0$ versions.
Now, as
\begin{equation}\label{eq:g-1-reinstate}
g_1\in S^{-\decay}(X;\Tsc^*X\otimes\Tsc^*X)=S^{2-\decay}(X;\Tb^*X\otimes\Tb^*X),
\end{equation}
we have
$$
\sqrt{g}=\sqrt{g_0}(1+\tilde g),\ \tilde g\in S^{-\decay}(X),
$$
and thus
$$
\pa_r^*-\pa_r^{*,0}\in S^{-1-\decay}(X),
$$
i.e.\ the effect on \eqref{eq:pa-r-adjoint} is that $e$ is replaced by a slightly
less decaying symbol. Correspondingly,
$$
A_s-A_{s,0}\in S^{s-1-\decay}\Diffb^1(X),\ \pA-\pA_0\in S^{-1-\decay}\Diffb^1(X),\ \ppA-\ppA_0\in S^{-1-\decay}\Diffb^1(X).
$$
Also,
$$
\Delta_g-\Delta_{g_0}\in S^{-2-\decay}\Diffb^2(X).
$$
Combining these, and expanding the commutators,
\begin{equation}\begin{aligned}\label{eq:perturbed-comm}
&[\Delta_g+V,A_s]-[\Delta_{g_0}+V,A_{s,0}]\in S^{-3+s-\decay}\Diffb^2(X),\\
&[\Delta_g+V,\pA]-[\Delta_{g_0}+V,\pA_0]\in S^{-3-\decay}\Diffb^2(X)\\
&[\Delta_g+V,\ppA]-[\Delta_{g_0}+V,\ppA_0]\in S^{-3-\decay}\Diffb^2(X),
\end{aligned}\end{equation}
as always with uniform estimates as $\Lambda \to \infty$ in the $\ppA$ case.

In order to make the foregoing into a global computation, we need to add a cutoff that
localizes near infinity.
So let $\chi_0(r)$ be a cutoff function equal
to $0$ for $r<1$ and $1$ for $r>2.$  Let $\chi(r)=\chi_0(r/R)$ with $R
\gg 0.$  Then
\begin{equation}\label{comm2}
\begin{aligned}
\mbox{}[\Lap_g+V, \chi(r)A_s]
&= 2s\pa_r^*\chi(r) r^{s-1}\pa_r+2r^{s-3}\chi(r)\Lap_Y+\frac{n-1}{2}\,(1-s)(n+s-3)r^{s-3}
\\ &+ \nabla_X^* (1-\chi(r)) \nabla_X + r^{s-4}E_s+r^{s-3-\decay}\tilde E_s,
\end{aligned}
\end{equation}
where $E_s\in \Diffsc^2(X)$, $\tilde E_s\in\Diffb^2(X)$,
and
\begin{equation}\label{comm-b}
\begin{aligned}
\mbox{}[\Lap_g+V, \chi(r)\pA ]
&= 2\pa_r^*\frac{\chi(r)}{r(\log r)^{2}}\pa_r+(1+e_3) 2r^{-3}\chi(r)\Lap_Y+(1+e_4)\frac{(n-1)(n-3)}{2r^{3}}
\\ &+\nabla_X^* (1-\chi(r)) \nabla_X + r^{-4}E_0+r^{-3-\decay}\tilde E_0
\end{aligned}
\end{equation}
where $E_0\in \Diffsc^2(X)$, $\tilde E_0\in\Diffb^2(X)$
(and are different from the $E_s$, $\tilde E_s$ in
\eqref{comm2}), and $e_i = O((\log r)^{-1})$ satisfy
symbol estimates.  Since on any compact set, the terms with $e_i$ can be
absorbed into $E_s$, we may assume here that $|e_i|<1/2$ so that
$$
1+e_i>1/2.
$$
For the same reason, by shifting a large compactly supported part of
the $\tilde E_s$ term in \eqref{comm2} into the faster-decaying $E_s$
we may assume that $\tilde E_s$ can be absorbed into the first three
terms of \eqref{comm2} without compromising their positivity: for any
$\delta>0$ we may assume
\begin{equation}\begin{split}\label{eq:absorb-tilde-Es}
&|\langle r^{s-3-\decay}\tilde E_s v,v\rangle|\\
&\qquad\leq
\delta
\Big(2s\|\sqrt{\chi(r)} r^{(s-1)/2}\pa_r v\|^2+2\|r^{(s-3)/2}\sqrt{\chi(r)}\nabla_Y v\|^2\\
&\qquad\qquad\qquad\qquad+\frac{n-1}{2}\,(1-s)(n+s-3)\|r^{(s-3)/2} v\|^2\Big)
\end{split}\end{equation}
for all $v\in\dCI(X)$ (hence by density whenever the right hand side is finite),
with.  We will take $\delta=1/2$.  Likewise, we may assume that the terms in
\eqref{comm-b} satisfy:
\begin{equation}\begin{split}\label{eq:absorb-tilde-Es-b}
&|\langle r^{-3-\decay}\tilde E_0 v,v\rangle|\\
&\qquad\leq
\delta 
\Big(2\|\frac{\sqrt{\chi(r)}}{r^{1/2}(\log r)}\pa_rv\|^2
+2\|r^{-3/2}\sqrt{\chi(r)}\nabla_Y v\|^2+\frac{(n-1)(n-3)}{2}\|r^{-3/2} v\|^2\Big).
\end{split}\end{equation}
We will take $\delta=1/4$ here.

Exactly the same computation applies in the case of $\ppA,$ but with the
addition of an extra $\pa_r^2$ term:
\begin{equation}\label{comm-bb}
\begin{aligned}
\mbox{}[\Lap_g+V, \chi(r)\ppA ]
= &2\pa_r^*\big( \frac{\chi(r)}{r(\log r)^{2}} +
\frac{\varkappa}{\Lambda} \cutoff'(r/\Lambda)  \big)\pa_r+(1+e_3)
2r^{-3}\chi(r)\Lap_Y\\
&+(1+e_4)\frac{(n-1)(n-3)}{2r^{3}}
\\ &+\nabla_X^* (1-\chi(r)) \nabla_X + r^{-4}E_0+r^{-3-\decay}\tilde E_0,
\end{aligned}
\end{equation}
with error terms estimated as above (provided $\varkappa$ is taken
sufficiently small).

We now consider the pairing
\begin{equation}\label{eq:pairing-1}
\int_0^T \ang{\chi(r) \pA (\Box+V) u,u} \, dt.
\end{equation}
While we will manipulate this expression for an arbitrary solution
$u,$ we remark that the calculations in the following paragraph are unchanged
if we replace $u$ by
$$
\Psi_H u \equiv \psi(H^2(\Lap_g+V)) u,
$$
which indeed we will do below.

We first remark that 
moving $\chi(r)$ and $\pA$ to the right slot of the pairing \eqref{eq:pairing-1} and inserting
the weight $r^{1/2}(\log r)$, resp.\ its reciprocal, in the two slots,
we obtain, for $\gamma>0$,
\begin{equation}\begin{split}\label{eq:comm-b-PDE}
&\left|\int_0^T \ang{\chi(r) \pA (\Box+V) u,u} \, dt\right|\\
&\leq \left(\int_0^T\|r^{1/2}(\log r)(\Box+V)u\|^2\,dt\right)^{1/2}
\left(\int_0^T\|r^{-1/2}(\log r)^{-1}(\pA)^*\chi(r)u\|^2\,dt\right)^{1/2}\\
&\leq \gamma^{-1}\int_0^T\|r^{1/2}(\log r)(\Box+V)u\|^2\,dt
+\gamma\int_0^T\|r^{-1/2}(\log r)^{-1}(\pA)^*\chi(r)u\|^2\,dt,
\end{split}\end{equation}
where $\ang{\cdot,\cdot}$ denotes the spatial inner product.  On the
other hand, integrating \eqref{eq:pairing-1} twice by parts yields
\begin{equation}\label{aaa}
-\ang{\chi(r) \pA u_t,u}|_0^T + \ang{\chi(r) \pA u,
  u_t}_0^T-\int_0^T \ang{ [\chi(r)\pA, \Lap_g+V] u, u} \, dt.
\end{equation}
Let $$\E(s) = \big(\norm{\nabla_g u}^2 + \norm{\pa_t u}^2+\norm{\sqrt V u}^2\big)|_{t=s}$$ denote
the energy norm at fixed time.  By the Hardy/Poincar\'e inequality, we have
$$
\norm{\chi \pA u(t)}^2 \lesssim \E(t).
$$
We further compute, as usual, that
$$
\E'(t) =2 \Re \ang{u_t, (\Box+V) u} \leq  2\E(t)^{1/2}
\norm{(\Box+V) u}.
$$
Hence
$$
\begin{aligned}
\E(T)^{1/2} &\leq \E(0)^{1/2} + \int_0^T \norm{(\Box +V) u} \, dt \\
&\lesssim \E(0)^{1/2}
+ \int_0^T \norm{(\Box +V) u} \, dt
\end{aligned}
$$
Consequently, the first two terms in \eqref{aaa} are bounded by a
multiple of
$$
\E(0)+ \left(\int_0^T \norm{(\Box +V) u} \, dt\right)^2
$$
uniformly in $t.$\footnote{If preferred, we could of course replace the norm on
the inhomogeneity by the weighted $L^2$ spacetime norm $$\int_0^T\norm{(2+t^2)^{1/4}
\log (2+t^2) (\Box +V) u}^2 \,dt.$$}

Thus, \eqref{comm-b}, \eqref{eq:absorb-tilde-Es-b} and
\eqref{eq:comm-b-PDE} now allow us to estimate (absorbing
lower-order terms into positive ones, and choosing $\gamma$ small so that
the second term on the right hand side of \eqref{eq:comm-b-PDE} can be
absorbed into the left hand side below):
\begin{equation}\label{foo}
\begin{aligned}
\int_0^T &\ang{\pa_r^* r^{-1} \chi(r) \log(r)^{-2}\pa_r u, u}+ \ang{r^{-3}u,u}\\
&\qquad\qquad+\ang{r^{-3}\chi(r)\Lap_Yu,u} + \ang{\nabla_X^* \chi(r) \nabla_X u,
  u}\, dt \\  &\lesssim  \int_0^T
\ang{r^{-4}E_0u,u} \, dt + \E(0)+\big(\int_0^T\norm{ (\Box +V) u}\,dt\big)^2.
\end{aligned}
\end{equation}

We apply this estimate to spectrally localized data, i.e.\ replace $u$ by $\Psi_H u$,
as indicated at the beginning of the previous paragraph.
We state the following lemma more generally than is immediately necessary, in the $n\geq 3$ setting.
\begin{lemma}\label{lemma1}
Suppose $n\geq 3$.
If $E \in \Diffsc^2(X)$ and $u$ is as above, then for $0\leq\rho$, $0\leq s$,
$0\leq \rho+s<\min(2,n/2)$,
$$
\abs{\ang{r^{-2(s+\rho)}E \Psi_Hu,\Psi_Hu}} \lesssim H^{-2\rho}\norm{r^{-s} \Psi_H u}^2.
$$
\end{lemma}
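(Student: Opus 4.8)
The plan is to reduce the bilinear estimate to an operator-norm bound on the conjugated cutoff $\Psi_H$, exactly of the type furnished by Proposition~\ref{prop:conjugated-cutoff}. First I would write $E = \sum_j a_j L_{j,1} L_{j,2}$ with $a_j \in \CI(X)$ and $L_{j,i} \in \Diffsc^1(X)$ (using that $\Diffsc^2(X)$ is generated over $\CI(X)$ by products of at most two scattering vector fields); since the weight $r^{-2(s+\rho)} = x^{2(s+\rho)}$ commutes with scalars and can be split as $x^{s+\rho}\cdot x^{s+\rho}$, it suffices to bound expressions of the form
$$
\abs{\ang{x^{s+\rho} L_1 L_2 x^{s+\rho}\Psi_H u, \Psi_H u}}.
$$
Commuting one factor $x^{s+\rho}$ past $L_1$ (which costs only lower-order terms in $\Diffsc(X)$, absorbable into the same scheme with smaller $s+\rho$) and pairing by Cauchy--Schwarz, this is controlled by
$$
\norm{L_1 x^{s+\rho}\Psi_H u} \cdot \norm{L_2 x^{s+\rho}\Psi_H u},
$$
modulo lower-order terms. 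So the whole lemma reduces to the single-factor bound $\norm{L x^{s+\rho}\Psi_H u} \lesssim H^{-\rho}\norm{x^{-s}\Psi_H u}$ for $L \in \Diffsc^1(X)$.

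To get that, I would insert the identity in the form $\Psi_H u = \Psi_H \big(x^s (x^{-s} u)\big)$ — but more cleanly, write $v = \Psi_H u$ and use $v = \Psi_H' v$ where $\Psi_H' = \tilde\psi(H^2(\Lap_g+V))$ is a second cutoff with $\tilde\psi \equiv 1$ on $\supp\psi$, so that $v = \Psi_H' v$ exactly. Then
$$
L x^{s+\rho} v = L x^{s+\rho} \Psi_H' x^{-s} \cdot (x^s v),
$$
and by Proposition~\ref{prop:conjugated-cutoff} (applicable since $0\le s$, $0\le\rho$, $s+\rho<\min(2,n/2)$) the operator $L x^{s+\rho}\Psi_H' x^{-s}$ has norm $\lesssim H^{-\rho}$ on $L^2(X)$. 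Hence $\norm{L x^{s+\rho} v} \lesssim H^{-\rho}\norm{x^s v}$. But $x^s v = x^s \Psi_H u$, and since $0 \le s$ with $x$ bounded, $\norm{x^s \Psi_H u} \le C\norm{r^{-s}\Psi_H u}$; indeed these are comparable, so this is exactly the claimed right-hand side $H^{-\rho}\norm{r^{-s}\Psi_H u}$. Squaring and summing over the finitely many $j$ yields the lemma.

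The only genuine subtlety — and the step I expect to need the most care — is bookkeeping the commutator remainders: when I move a weight $x^{s+\rho}$ past a scattering vector field, or when I expand $E$ into a product of first-order operators with smooth coefficients, I generate terms of strictly lower differential order with weights of the form $x^{s+\rho+\text{(positive)}}$, which must still satisfy the hypotheses of Proposition~\ref{prop:conjugated-cutoff} (they do, since increasing the exponent only helps, as long as we stay below $\min(2,n/2)$ — which we can arrange, or simply note that such terms are even better behaved and can be handled by the $\rho$ unchanged, $s$ increased case, or absorbed using $x \le C$). One should also be mildly careful that $\Diffsc^2(X)$ coefficients are merely $\CI(X)$, not decaying, so no free powers of $x$ are gained from $E$ itself — but none are needed, since all the decay we require is already present in the explicit weight $r^{-2(s+\rho)}$. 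Everything else is the functional calculus and Cauchy--Schwarz.
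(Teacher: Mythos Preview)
Your proof is correct and follows essentially the same approach as the paper: reduce the bilinear estimate on the second-order operator to the first-order bound $\norm{x^{s+\rho} L \Psi_H u}\lesssim H^{-\rho}\norm{x^s\Psi_H u}$ (the paper phrases this as ``by rearranging the LHS, as commutator terms require the same form of estimates''), then insert a second cutoff $\tilde\psi\equiv 1$ on $\supp\psi$ so that $\Psi_H u=\tilde\Psi_H\Psi_H u$ and apply Proposition~\ref{prop:conjugated-cutoff} to the conjugated operator. Your discussion of the commutator bookkeeping is more explicit than the paper's, but the content is identical; in particular your observation that extra powers of $x$ from commutators can simply be discarded via $x\le C$ is exactly the point.
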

\begin{proof}
It suffices, by rearranging the LHS (as commutator
terms require the same form of estimates) to show that for $L \in \Diffsc^1(X),$
$$
\norm{r^{-(s+\rho)} L \Psi_Hu} \lesssim H^{-\rho} \norm{r^{-s} \Psi_Hu}.
$$
Let $\phi\in\CI_c(\RR)$ be identically $1$ on $\supp\psi$.
By Proposition~\ref{prop:conjugated-cutoff},
$$
H^{\rho} r^{-(s+\rho)} L \phi(H^2(\Lap_g+V)) r^{s}
$$
is uniformly bounded in $H$, so for all $v$,
$$
\norm{H^{\rho} r^{-(s+\rho)} L \phi(H^2(\Delta+V)) v} \lesssim  \norm{r^{-s} v}.
$$
Applying this with $v=\Psi_H u$ completes the proof of the lemma.
\end{proof}

Taking $s=3/2$, $\rho=(1-\ep)/2,$ $0<\ep<1$,
which we may if $n\geq 4$, we deduce:
\begin{lemma}\label{cor-of-lemma1}
If $n\geq 4$, $E \in \Diffsc^2(X)$ and $u$ is as above, then for $0<\ep<1,$
$$
\abs{\ang{r^{-4+\ep}E \Psi_Hu,\Psi_Hu}} \lesssim H^{-1+\ep}\norm{r^{-3/2} \Psi_H u}^2.
$$
\end{lemma}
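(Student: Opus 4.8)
The plan is to obtain this statement as a direct specialization of Lemma~\ref{lemma1}. Recall that that lemma, applied to $E\in\Diffsc^2(X)$, yields
$$
\abs{\ang{r^{-2(s+\rho)}E\Psi_H u,\Psi_H u}}\lesssim H^{-2\rho}\norm{r^{-s}\Psi_H u}^2
$$
for all exponents satisfying $0\leq\rho$, $0\leq s$, and $0\leq s+\rho<\min(2,n/2)$. So the only task is to choose $s$ and $\rho$ so that the left-hand weight becomes $r^{-4+\ep}$, the gain becomes $H^{-1+\ep}$, and the weight surviving on $\Psi_H u$ is exactly $r^{-3/2}$.

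Concretely I would take $s=3/2$ and $\rho=(1-\ep)/2$. Then $2(s+\rho)=3+(1-\ep)=4-\ep$, so the left side of Lemma~\ref{lemma1} is precisely $\abs{\ang{r^{-4+\ep}E\Psi_H u,\Psi_H u}}$; and $2\rho=1-\ep$, so the right side is $H^{-(1-\ep)}\norm{r^{-3/2}\Psi_H u}^2=H^{-1+\ep}\norm{r^{-3/2}\Psi_H u}^2$, as claimed. It then remains only to check admissibility of this choice: $s=3/2\geq 0$ holds trivially, $\rho=(1-\ep)/2>0$ holds since $0<\ep<1$, and $s+\rho=2-\ep/2\in[0,2)$. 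Because $n\geq 4$ gives $\min(2,n/2)=2$, the constraint $0\leq s+\rho<\min(2,n/2)$ of Lemma~\ref{lemma1} is satisfied, and the conclusion follows at once.

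I do not anticipate any substantive obstacle here; the content of the corollary is entirely contained in Lemma~\ref{lemma1}, and the argument is just bookkeeping of exponents. The one point worth flagging — and the reason for the hypothesis $n\geq 4$ — is exactly the final inequality: in dimension $n=3$ one has $\min(2,n/2)=3/2$, whereas the choice $s+\rho=2-\ep/2$ exceeds $3/2$, so Lemma~\ref{lemma1} cannot be invoked with the weight $s=3/2$ needed to reach the $r^{-3/2}$ scale. Thus the ``hard part'', such as it is, is simply to notice that $\rho=(1-\ep)/2$ must be kept strictly positive (forcing $\ep<1$) while still leaving room under the ceiling $\min(2,n/2)$.
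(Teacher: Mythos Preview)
Your proposal is correct and follows exactly the paper's approach: the paper simply notes ``Taking $s=3/2$, $\rho=(1-\ep)/2$, $0<\ep<1$, which we may if $n\geq 4$, we deduce'' the lemma from Lemma~\ref{lemma1}. Your verification of the exponent constraints and your remark on why $n\geq 4$ is needed are precisely the content of that one-line deduction.
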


As a consequence, we can estimate, a fortiori,
$$
\ang{r^{-4}E_0 \Psi_Hu,\Psi_Hu} \lesssim H^{-1+\ep} \norm{r^{-3/2} \Psi_Hu}^2
$$
so that for $H$ sufficiently large, this term may be absorbed in the
main term on the left of \eqref{foo}.  Thus, for $H$ sufficiently large,
$$
\begin{aligned}
\int_0^T &\norm{r^{-1/2} (\log r)^{-1} \sqrt\chi(r) \pa_r \Psi_Hu}^2 +
\norm{r^{-3/2} \Psi_Hu}^2\\
&\qquad\qquad+\norm{r^{-3/2} \sqrt\chi(r)\nabla_Y \Psi_Hu}^2 +
\norm{\sqrt{1-\chi} \nabla_X \Psi_Hu}^2 \,  dt\\ &\lesssim 
\E(0)+
\big(\int_0^T\norm{ (\Box +V) u}\,dt\big)^2,
\end{aligned}
$$
with constants independent of $T.$
This concludes the proof of Theorem~\ref{thm1}.\qed

To prove Theorem~1.1' we proceed similarly, but with $\ppA$ replacing
$\pA.$ We now let
\begin{equation}\label{eq:Ups-def}
\Upsilon_k=\{r \in [2^k, 2^{k+1}]\}
\end{equation}
denote a dyadic decomposition in radial shells.  Then we obtain by the
same argument
\begin{equation}\label{eq:comm-bprime-PDE}\begin{aligned}
&\left|\int_0^T \ang{\chi(r) \ppA (\Box+V) u,u} \, dt\right|
\\&\quad=\left|\int_0^T \ang{r^{1/2} (\Box+V) u,r^{-1/2}(\ppA)^*
\chi(r) u} \, dt\right|\\
&\quad\leq \int_0^T \sum_k \int_{\Upsilon_k}  \left\lvert r^{1/2} (\Box+V)
  u\right \rvert \cdot \left \lvert r^{-1/2}(\ppA)^*
 \chi(r) u \right \rvert \,dg\, dt\\
&\quad\leq \sum_k \norm{ r^{1/2} (\Box+V) u}_{L^2([0,T] \times \Upsilon_k)}\norm{r^{-1/2}(\ppA)^*
 \chi(r) u }_{L^2([0,T] \times \Upsilon_k)}\\
&\quad \leq \norm{ r^{1/2} (\Box+V) u}_{\ell^1(\NN; L^2([0,T] \times \Upsilon_k))}\norm{r^{-1/2}(\ppA)^*
 \chi(r) u }_{\ell^\infty(\NN; L^2([0,T] \times \Upsilon_k))}\\
&\quad \leq \gamma^{-1} \norm{ r^{1/2} (\Box+V) u}_{\ell^1(\NN; L^2([0,T] \times
  \Upsilon_k))}^2 +\gamma\norm{r^{-1/2}(\ppA)^*
 \chi(r) u }_{\ell^\infty(\NN; L^2([0,T] \times \Upsilon_k))}^2
\end{aligned}\end{equation}
Thus, for $\gamma>0$ we have
\begin{equation}\label{footwo}
\begin{aligned}
\int_0^T &\Big(\ang{\pa_r^* r^{-1} \chi(r) \log(r)^{-2}\pa_r u, u}+ \ang{\pa_r^*
  \frac{\varkappa}{\Lambda} \cutoff'(r/\Lambda) \chi(r) \pa_r  u, u}
+ \ang{r^{-3}u,u}\\
&\qquad\qquad+\ang{r^{-3}\chi(r)\Lap_Yu,u} + \ang{\nabla_X^* \chi(r) \nabla_X u,
  u}\Big)\, dt \\  &\lesssim  \int_0^T
\ang{r^{-4}E_0u,u} \, dt + \E(0)+\left(\int_0^T\norm{ (\Box +V) u}\,dt\right)^2
\\ &\qquad\qquad+\gamma^{-1} \norm{ r^{1/2} (\Box+V) u}_{\ell^1(\NN; L^2([0,T] \times
  \Upsilon_k))}^2\\
&\qquad\qquad+\gamma\norm{r^{-1/2}(\ppA)^*
 \chi(r) u }_{\ell^\infty(\NN; L^2([0,T] \times \Upsilon_k))}^2.
\end{aligned}
\end{equation}
Now we take the supremum of the LHS over $\Lambda \in \{2^k, k \in \NN\}$
to obtain
\begin{equation}\label{foothree}
\begin{aligned}
\int_0^T &\Big(\ang{\pa_r^* r^{-1} \chi(r) \log(r)^{-2}\pa_r u, u}
+ \ang{r^{-3}u,u}+\ang{r^{-3}\chi(r)\Lap_Yu,u}\\
&\qquad\qquad\qquad+ \ang{\nabla_X^* \chi(r) \nabla_X u,
  u}\Big)\, dt
\\
&\qquad\qquad+\norm{\chi(r) r^{-1/2}\pa_r
 u }_{\ell^\infty(\NN; L^2([0,T] \times \Upsilon_k))}^2 \\  &\lesssim  \int_0^T
\ang{r^{-4}E_0u,u} \, dt + \E(0)+\left(\int_0^T\norm{ (\Box +V) u}\,dt\right)^2
\\ &\qquad\qquad+\gamma^{-1} \norm{ r^{1/2} (\Box+V) u}_{\ell^1(\NN; L^2([0,T] \times
  \Upsilon_k))}^2\\
&\qquad\qquad+\gamma\norm{r^{-1/2}(\ppA)^*
 \chi(r) u }_{\ell^\infty(\NN; L^2([0,T] \times \Upsilon_k))}^2.
\end{aligned}
\end{equation}
where we have of course used the fact that $r \sim \Lambda$ on $\supp
\cutoff'(r/\Lambda).$  Taking $\gamma$ sufficiently small to absorb the
last term in the LHS yields and applying the resulting estimate to
spectrally localized data as above yields Theorem~1.1'.\qed

\section{A local Morawetz estimate}\label{sec:local}

In this section, we prove Theorem~\ref{thm2}. We fix some $\tilde\psi\in\CI_c(\RR)$
which is identically $1$ on $\supp\psi$. We write $u$ simply in place
of
$$
\Psi_H u=\psi(H^2(\Delta+V))u
$$
throughout this section to simplify the notation.

As our estimates will be in spacetime, we also define the manifold
with corners given by its compactification,
$$
M=\overline{\RR}\times X,
$$
with $\overline{\RR}$ denoting the compactification of $\RR$ to an
interval and $X$ our scattering manifold, regarded as usual as a
manifold with boundary endowed with the singular metric
\eqref{eq:metric-form}.  In this section we will employ the notation
$\norm{\bullet}_M$ to denote the spacetime $L^2$ norm, and for the
sake of emphasis, we will use $\norm{\bullet}_X$ for the spatial norm,
previously denoted simply $\norm{\bullet}.$  We will likewise let $\nabla_X$
denote the gradient in the spatial directions only, previous denoted
$\nabla_g,$ while letting $\nabla$ denote the spacetime gradient.

We may assume that the  $\delta$ in \eqref{omegadef} is sufficiently small
for convenience (as the result is stronger then); e.g.\ take $\delta<1/4$.
Let $\phi_0\in\CI_c(\RR)$, supported in $[-2,2]$, identically $1$
in $[-1,1]$. With $0<c<3\delta<1$ 
to be fixed, let $\phi(\dummyvar)=\phi_0(\dummyvar/c)$, and let $\tilde\phi(\dummyvar)=
\phi(\dummyvar/(3\delta))$. Also, we let $\chi$ be as in Section~\ref{sec:Morawetz},
see the definition just above \eqref{comm2}, so $\chi\equiv 1$ near infinity.
Now, with
$$
k=2\kappa-1,\ s=1-2\sigma,
$$
let
$$
B_{k,s}=\frac12\left(\phi(r/t)^2\tilde\phi(t)^2 (r/t)^s t^{-k} \chi(r) \pa_r
-\pa_r^*\phi(r/t)^2\tilde\phi(t)^2 (r/t)^s t^{-k}\chi(r)\right),
$$
so on the complement of $\supp (1-\phi\tilde\phi)$
(where $\phi\tilde\phi\equiv 1$) we actually have
$$
B_{k,s}=t^{-k-s}A_s
$$
with $A_s$ given by \eqref{As}.
Note that $\supp d\tilde\phi\cap\supp\phi$ is a compact subset of $M^\circ$,
so we can effectively ignore terms in which $\tilde\phi$ is commuted through
differential operators below.

Before proceeding, we recall that in $\{r>r_0,\ t>1\}\subset M$,
$\Vb(M)$ is spanned by $r\pa_r$, $t\pa_t$
and vector fields on $Y$, over $\CI(M)$. We now blow up the corner $\pa_2 M$, where both
$x=0$ and $t^{-1}=0$, to obtain the manifold
$$
\tilde M=[M;\pa_2 M].
$$
The blowup procedure (cf.\ \cite[Appendix]{RBMSpec}; the simple setting
of \cite[Section~4.1]{Melrose:Atiyah} where a codimension 2 corner is blown up
is analogous to the present case, with our $t^{-1}$ taking the place of $x'$ in
\cite {Melrose:Atiyah})
serves to replace the
corner $t^{-1}=r^{-1}=0$ of $M$ by its inward-pointing spherical
normal bundle, in this case diffeomorphic to $Y$ times an interval.
This new boundary hypersurface will be denoted\footnote{The notation
  is short for ``main face,'' as for $X$ Euclidean (diffeomorphically, not metrically),
this can be identified with an open dense
subset of the boundary of the radial compactification of Minkowski space. Note that
the light cone, $r/t=1$, hits the boundary in the interior of $\mf$. If $r/t=1$ in $\mf$
were blown up inside $\tilde M$, the resulting front face is where
Friedlander's radiation field \cite{Friedlander} can be defined by
rescaling a solution to the wave equation. Indeed, in the interior of this front
face (in $t>0$)
$r-t=(r/t-1)/t^{-1}$ becomes a smooth variable
along the fibers of the front face.}
 $\mf.$  We let $\tf$ denote the ``temporal infinity'' face given by
 the lift of the set $t^{-1}=0$ in $M$ to $\tilde{M}.$
The main consequence of the blowup procedure is that it
introduces $r/t$ as a smooth function where it is bounded, and its differential
is non-vanishing on the interior of $\mf$ (where it is thus one of
the standard coordinates, namely the variable along the interval referred to above;
others being $t^{-1}$ and $y,$ coordinates
on $Y=\pa X$;
we use $t^{-1}$ as a boundary defining function for $\mf$).
However,
$\Vb(\tilde M)$ is still spanned by the lift of the same vector fields, except that
the smooth structure is replaced by $\CI(\tilde M)$. Correspondingly, the
symbol spaces are unaffected by the blow-up. Now, the support of $d\phi$
only intersects the front face of $\tilde M$ among all boundary faces, and
$r/t$ is bounded from both above and below by positive constants
there.
\begin{figure}
\setlength{\unitlength}{0.00041667in}
\begingroup\makeatletter\ifx\SetFigFont\undefined%
\gdef\SetFigFont#1#2#3#4#5{%
  \reset@font\fontsize{#1}{#2pt}%
  \fontfamily{#3}\fontseries{#4}\fontshape{#5}%
  \selectfont}%
\fi\endgroup%
{\renewcommand{\dashlinestretch}{30}
\begin{picture}(9699,6146)(0,-10)
\put(7137,0){\Large{$\tilde{M}$}}
\put(6053.296,5672.395){\arc{1432.185}{6.1394}{6.9106}}
\put(9602.350,5169.252){\arc{1173.430}{1.4621}{3.2487}}
\put(5201.639,4576.997){\arc{4369.283}{6.2672}{6.7887}}
\put(5459.157,5239.166){\arc{1110.177}{0.0670}{1.6540}}
\put(8897.621,4431.604){\arc{1452.961}{2.1434}{3.2304}}
\blacken\path(8390.550,3870.286)(8504.000,3821.000)(8427.090,3917.876)(8390.550,3870.286)
\put(8209.802,5096.991){\arc{3452.002}{0.8990}{1.5742}}
\blacken\path(9205.375,3650.511)(9284.000,3746.000)(9169.689,3698.745)(9205.375,3650.511)
\put(2195,4425){\ellipse{4350}{1800}}
\put(7512,4425){\ellipse{4350}{1800}}
\put(7512,5250){\ellipse{3000}{1260}}
\path(12,4350)(12,450)
\path(4362,4350)(4362,450)
\path(4362,4425)(4362,525)
\path(5337,4425)(5337,525)
\path(9687,4425)(9687,525)
\path(2174,3506)(2174,2516)
\blacken\path(2144.000,2636.000)(2174.000,2516.000)(2204.000,2636.000)(2144.000,2636.000)
\path(3494,3716)(3134,4151)
\blacken\path(3233.620,4077.680)(3134.000,4151.000)(3187.396,4039.426)(3233.620,4077.680)
\path(7514,5126)(7514,4751)
\blacken\path(7484.000,4871.000)(7514.000,4751.000)(7544.000,4871.000)(7484.000,4871.000)
\put(7034,5996){$\tf$}
\put(6224,4271){$\mf$}
\put(6224,2800){$\spf$}
\put(5639,3071){$\sf$}
\put(8819,3191){$y$}
\put(8384,4256){$r/t$}
\put(7619,4896){$t^{-1}$}
\put(3614,4076){$r^{-1}$}
\put(2384,2936){$t^{-1}$}
\put(2037,0){\Large{$M$}}
\put(9043.244,5608.035){\arc{1451.424}{2.6256}{3.3737}}
\end{picture}
}
\caption{The compactified spacetime $M$ and the blown-up
    spacetime $\tilde M.$}
\end{figure}
Thus,
on the support of $d\phi$,
$t^{-1}\Vb(\tilde M)$ is locally generated by $\pa_r$, $\pa_t$ and
$r^{-1}\pa_Y$, i.e.\ by vector fields corresponding to the energy space.
Thus, on $\supp d\phi,$ $\nabla v\in t^\kappa L^2(M)$ implies
$v\in t^{\kappa+1} L^2(M)$, $k<(n-1)/2$,
by a Hardy inequality, and so we obtain more generally:
\begin{lemma}\label{lemma:dphihardy}
Let $\Upsilon$ be an open set in $\tilde M$ with $\overline{\Upsilon}
\cap \pa\tilde M \subset \mf^\circ.$
If $\supp v \subset \Upsilon,$
$$
\nabla v \in t^\kappa L^2(M) \Longrightarrow v\in t^{\kappa+1}\Hb^1(\tilde M).
$$
\end{lemma}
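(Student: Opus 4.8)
The plan is to localize $\supp v$ near the main face $\mf$ and there reduce the statement to a one-dimensional Hardy inequality in the variable $\log t$; away from $\mf$ the assertion is purely interior regularity.

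Since $\tilde M$ is compact, $\overline\Upsilon$ is compact, and since $\overline\Upsilon\cap\pa\tilde M\subset\mf^\circ$ it is bounded away from the corners of $\mf$ (where $r/t\to0$ or $r/t\to\infty$) and from all other boundary faces. Hence the part of $\supp v$ that approaches $\pa\tilde M$ lies in a region where $r/t$ is pinched between two positive constants. I would introduce a cutoff $\zeta(t)$ equal to $1$ for $t$ large and to $0$ for $t$ in a bounded set, and write $v=\zeta(t)v+(1-\zeta(t))v$. The second term is supported in a compact subset of $M^\circ$, where the weights $t^{\kappa\pm1}$ are harmless and $v\in\Hb^1$ over such a set is just $v\in H^1_{\mathrm{loc}}(M^\circ)$, which holds for the functions $v$ in play (e.g.\ $v=\Psi_H u$ with $u$ in the energy space); the commutator $(\nabla\zeta)v$ is likewise supported in a bounded region and contributes only a local $L^2$ norm of $v$. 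It thus remains to treat $w_0:=\zeta(t)v$, supported where $t\geq T_1$ and $a\leq r/t\leq b$, $0<a<b$.

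On this region $t^{-1}$ is a boundary defining function for $\mf$, and (as recorded just above the lemma) $\Vb(\tilde M)$ is spanned by $r\pa_r$, $t\pa_t$, $\pa_Y$, each of which is $t$ times a vector field of the energy module with bounded coefficient; so $\nabla w_0\in t^\kappa L^2(M)$ already forces $\Vb(\tilde M)w_0\in t^{\kappa+1}L^2(M)$, and the only remaining point is $w_0\in t^{\kappa+1}L^2(M)$. For this I pass to coordinates $(\tau,\rho,y)=(\log t,\ r/t,\ y)$, in which the density of $L^2(M)$ is $\sim t^{n+1}\,d\tau\,d\rho\,dh$ and the dilation field $\pa_\tau=t\pa_t+r\pa_r$ satisfies $\abs{\pa_\tau w_0}\lesssim t\abs{\nabla w_0}$ on $\supp w_0$. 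Setting $W=t^{(n-1-2\kappa)/2}w_0$ and $\mu=(n-1-2\kappa)/2$, one computes $\norm{W}_{L^2(d\tau\,d\rho\,dh)}=\norm{t^{-\kappa-1}w_0}_{L^2(M)}$ and $\pa_\tau W-\mu W=H$ with $H:=t^{(n-1-2\kappa)/2}\pa_\tau w_0$ and $\norm{H}_{L^2(d\tau\,d\rho\,dh)}\lesssim\norm{t^{-\kappa}\nabla w_0}_{L^2(M)}<\infty$. For fixed $(\rho,y)$, pairing $\pa_\tau W-\mu W=H$ with $\overline W$ and integrating over the half-line $\tau\geq\tau_0$, the term $\mathrm{Re}\int\pa_\tau W\,\overline W\,d\tau$ is a total derivative; discarding the endpoint contributions leaves $\abs{\mu}\,\norm{W}_{L^2_\tau}\lesssim\norm{H}_{L^2_\tau}$. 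Integrating in $(\rho,y)$ and translating back gives $\norm{t^{-\kappa-1}w_0}_{L^2(M)}\lesssim\norm{t^{-\kappa}\nabla w_0}_{L^2(M)}$, which together with the previous two paragraphs finishes the proof.

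The principal obstacle is the sign and size of $\mu=(n-1-2\kappa)/2$. The argument requires $\mu\neq0$; in the resonant case $\mu=0$ (i.e.\ $k=2\kappa-1=n-2$) the half-line estimate $\norm{W}_{L^2_\tau}\lesssim\norm{H}_{L^2_\tau}$ is simply false. Under the restriction on $k$ in force just before the lemma (recorded there as $k<(n-1)/2$, which for $n\geq3$ forces $k<n-2$ and hence $\mu>0$) not only is $\mu\neq0$ but the boundary term at $\tau_0$ enters with the favorable sign and may be discarded; this is exactly where the restriction is used. (In a dyadic-rescaling version of the argument, the same restriction is precisely what makes a geometric series in the scale converge.) The only other technical point is the boundary term at $\tau\to\infty$, which vanishes after a routine cutoff-and-limit argument using the polynomial growth of $v$.
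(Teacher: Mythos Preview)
The paper gives no detailed proof of this lemma beyond the sentence ``by a Hardy inequality,'' so your task was to supply that step. Your reduction to coordinates $(\tau,\rho,y)=(\log t,r/t,y)$, your density computation, and the observation that $\Vb(\tilde M)$-derivatives are controlled immediately are all correct and match what the paper sketches. The gap is in the $L^2$ bound on $W=t^\mu w_0$, where your choice to integrate by parts in $\tau$ (the non-compact direction) rather than in $\rho$ (the compact one) leaves an unfavorable boundary term.

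With $\mu>0$ (as you correctly deduce from $k<(n-1)/2$), pairing $W'-\mu W=H$ with $\overline W$ on $[\tau_0,T]$ gives
\[
\mu\int_{\tau_0}^T|W|^2\,d\tau=\tfrac12|W(T)|^2-\tfrac12|W(\tau_0)|^2-\mathrm{Re}\int_{\tau_0}^T H\overline W\,d\tau,
\]
so the contribution at $T\to\infty$ carries the \emph{unfavorable} sign $+\tfrac12|W(T)|^2$; your sign analysis has the roles of the two endpoints reversed. Your regularization claim does not rescue this: polynomial growth of $v$ in $t$ is exponential growth of $W$ in $\tau$, so a cutoff $\chi_R(\tau)$ produces an error of order $R^{-1}\int_R^{2R}|W|^2\,d\tau$ that need not vanish. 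Indeed, for the one-dimensional ODE alone, $W=e^{\mu\tau}$ solves $W'-\mu W=0\in L^2$ with $W\notin L^2$; what rules out this growing mode in the actual problem is not decay in $\tau$ but the compact support of $w_0$ in $\rho$ together with the control of $\pa_\rho w_0$ coming from $\pa_r v$---and your $\pa_\tau$ argument never invokes either.

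The fix is immediate: apply the one-dimensional Poincar\'e inequality in $\rho\in[a,b]$, where $w_0$ is compactly supported, to obtain $\int|w_0|^2\,d\rho\leq(b-a)^2\int|\pa_\rho w_0|^2\,d\rho$ at each fixed $(\tau,y)$. Since $\pa_\rho=t\pa_r$, integrating in $(\tau,y)$ yields $\norm{t^{-\kappa-1}w_0}_{L^2(M)}\leq(b-a)\norm{t^{-\kappa}\pa_r w_0}_{L^2(M)}$ with no boundary terms at infinity and no restriction on $k$ at all. This is presumably the Hardy inequality the paper intends.
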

Here the Sobolev space is still relative to the metric density (so
$L^2(M)$ and $L^2(\tilde M)$ are the same, as only the smooth
structure at infinity changed).

Before proceeding, we caution the reader that {\em near temporal infinity}, $\tf$,
i.e.\ the lift of $t=+\infty$ on $M$ to $\tilde M$,
weighted versions of $\Vb(\tilde M)$
do not give rise to the finite length vector fields relative to $dt^2+g$ (i.e.\ the
energy space). Indeed, in the interior of $\tf$, one is in a compact set of
the spatial slice $X$, and $\Vb(\tilde M)$ is spanned
by $t\pa_t$ and smooth vector fields on $X$, while the energy space corresponds
to $\pa_t$ and smooth vector fields on $X$. In order to emphasize the structure
when we stay away from $\tf$ and also from spatial
infinity $\spf$, i.e.\ the lift
of $\pa X\times\overline{\RR}$, we write
$$
\tilde M'=\tilde M^\circ\cup\mf^{\circ}=\tilde M\setminus(\tf\cup\spf).
$$

Our convention below is that inner products and norms are on $M$ unless otherwise
specified by a subscript $X$ (in which case they are on $X$). The inner product
on $M$ on functions is given by integration against the density of
$dt^2-g$, or equivalently that of $dt^2+g$.
The inner product on vectors on $M$ uses the positive
definite inner product, based on $dt^2+g$, unless otherwise specified.
The one case where we take advantage of the ``otherwise specified'' disclaimer
is when we consider the indefinite Dirichlet form in \eqref{eq:D-form-1}, where
we use the indefinite form induced by $dt^2-g$ (so the density is positive
definite, but not the pointwise pairing between vectors) in order to use the PDE.

Now,
\begin{equation}\label{eq:t-weight-comm}
[\pa_t^2,t^{-k-s}]=-2(k+s)t^{-k-s-1}\pa_t+(k+s)(k+s+1)t^{-k-s-2}
\in t^{-k-s-2}\Diffb^1(\overline{\RR}_t).
\end{equation}
Moreover, on $\tilde M'$ (hence on $\supp d\phi$),
$$
B_{k,s}\in t^{-k-1}\Diffb^1(\tilde M'),\ \Box\in t^{-2}\Diffb^2(\tilde M')
\Longrightarrow [\Box,B_{k,s}]\in t^{-k-3}\Diffb^2(\tilde M').
$$
In particular, from \eqref{comm2},
\begin{equation}\begin{split}\label{comm3}
&[\pa_t^2+\Lap_g+V, B_{k,s}]\\
&= t^{-k-s}\phi(r/t)^2\tilde\phi(t)^2\Big(\sum_i Q_i^* r^s t^{-1}R_i\\
&\qquad\qquad+(2s\pa_r^*r^{s-1}\chi(r)\pa_r+2r^{s-3}\chi(r)\Lap_Y+\frac{n-1}{2}\,(1-s)(n+s-3)r^{s-3}\\
&\qquad\qquad\qquad\qquad
+\nabla_X^*(1-\chi(r))\nabla_X+ r^{s-4}E_s+r^{s-3-\decay}\tilde E_s)\Big)+F_{k,s},
\end{split}\end{equation}
where $Q_i\in t^{-1}\Diffb^1(\overline{\RR}_t)$, $R_i\in
r^{-1}\Diffb^1(X)$ contains no $y$-derivatives and is supported on $\supp\chi$,
$E_s\in \Diffsc^2(X)$, $\tilde E_s\in\Diffb^2(X)$,
$F_{k,s}\in t^{-k-3}\Diffb^2(\tilde M)$ and $F_{k,s}$
is supported on
$\supp d\phi$.
Note that the term with $\nabla_X$ has spatially compact support, and could
be absorbed into $E_s$, but writing the commutator in the stated manner
is convenient for it gives a positive definite result modulo $E_s$, which we
later control separately, by low energy techniques. Moreover, for $0<s<1$
(which is the case if $0<\sigma<1/2$), we may assume that the $\tilde E_s$ term
can be absorbed into the first three terms on the right hand side by
\eqref{eq:absorb-tilde-Es}.
Thus, for $0<s<1$
the spatial term, arising from
\eqref{comm2}, is positive modulo the $E_s$ term, while the terms $Q_i^* r^s t^{-1}R_i$
arising from the commutator with $\pa_t^2,$ are indefinite. However,
we can estimate
\begin{equation*}\begin{split}
&\|t^{-(k+s)/2} r^{(s-1)/2}\tilde\phi\phi R_i u\|_M\\
&\qquad\leq C_0 (\| t^{-(k+s)/2} r^{(s-1)/2}\tilde\phi\phi\pa_r u\|_M+\|t^{-(k+s)/2} \tilde\phi\phi r^{(s-3)/2} u\|_M),
\end{split}\end{equation*}
with $C_0$ independent of $c$ in the definition of $\phi$. We also have a
similar estimate for the time derivatives, using the PDE and the fact
that $t^{-1} \lesssim r^{-1}$ on $\supp \phi:$
\begin{equation}\begin{split}\label{eq:pa-t-est}
&\|t^{-(k+s)/2}  r^{(s-1)/2}\tilde\phi\phi Q_i u\|_M\\
&\leq C_1 \big(\|t^{-(k+s)/2} r^{(s-1)/2}\tilde\phi\phi\pa_t u\|_M+\|t^{-(k+s)/2} r^{(s-3)/2} \tilde\phi\phi u\|_M\big)\\
&\leq C_2 \Big(\|t^{-(k+s)/2} r^{(s-1)/2}\tilde\phi\phi\nabla_X u\|_M+\|t^{-(k+s)/2} r^{(s-3)/2} \tilde\phi\phi u\|_M\\
&\qquad\qquad+\|t^{-(k+s)/2} r^{(s+1)/2}\tilde\phi\phi(\Box+V) u\|_M\\
&\qquad\qquad+\|t^{-(k+s)/2} r^{(s-1)/2}\nabla_X u\|_{\supp d(\phi\tilde\phi)}+\|t^{-(k+s)/2} r^{(s-3)/2} \tilde\phi\phi u\|_{\supp d(\phi\tilde\phi)}\Big).
\end{split}\end{equation}
To show the validity of the last step, consider
the (indefinite!)\ Dirichlet form relative to $dt^2-g$, and use that
\begin{equation}\begin{split}\label{eq:D-form-1}
&\langle t^\alpha r^\beta \tilde\phi\phi\nabla u,t^\alpha r^\beta \tilde\phi\phi\nabla u\rangle_{M,dt^2-g}
-\langle t^\alpha r^{\beta+1} \tilde\phi\phi\Box u,t^\alpha r^{\beta-1} \tilde\phi\phi u\rangle_M\\
&=
\langle f_0 t^{\alpha-1}r^\beta \tilde\phi\phi u,t^\alpha r^\beta \tilde\phi\phi\pa_t u\rangle_M
+\langle f t^{\alpha}r^{\beta-1} \tilde\phi\phi u,t^\alpha r^\beta \tilde\phi\phi\nabla_X u\rangle_M\\
&\qquad\qquad+\sum_j\langle t^{\alpha-1}r^{\beta}Q'_ju,t^{\alpha}r^{\beta}R'_ju\rangle_M,
\end{split}\end{equation}
where $f_0$, $f$ and $Q'_j$ are zero'th order symbols on $\tilde M$,
$R'_j\in t^{-1}\Vb(\tilde M)$, and $Q'_j$ is
supported on $\supp d(\phi\tilde\phi)$ (so $R'_j$ can be taken supported nearby).
Indeed,
the terms on the right hand side can be controlled: the second term directly
by the positive spatial term via Cauchy-Schwarz,
\begin{equation}\label{eq:D-form-comm1}
|\langle f t^{\alpha}r^{\beta-1} \tilde\phi\phi u,t^\alpha r^\beta \tilde\phi\phi\nabla_X u\rangle_M|\lesssim\| t^{\alpha}r^{\beta-1} \tilde\phi\phi u\|_M^2+\|t^\alpha r^\beta \tilde\phi\phi\nabla_X u\|_M^2,
\end{equation}
while the first can be
Cauchy-Schwarzed with a small constant in front of the $\pa_t$ factor in the
pairing, which then
can be reabsorbed in the Dirichlet form, while the other factor can be controlled
directly from the spatial positive term using that $r/t$ is bounded, and indeed
small:
\begin{equation}\label{eq:D-form-comm2}
|\langle f_0 t^{\alpha-1}r^\beta \tilde\phi\phi u,t^\alpha r^\beta \tilde\phi\phi\pa_t u\rangle_M|
\lesssim \ep\|t^\alpha r^\beta \tilde\phi\phi\pa_t u\|_M^2+\ep^{-1}\|t^{\alpha-1}r^\beta \tilde\phi\phi u\|_M^2.
\end{equation}
In addition, the $\Box u$ term can be controlled using Cauchy-Schwarz:
\begin{equation}\label{eq:D-form-comm3}
\langle t^\alpha r^{\beta+1} \tilde\phi\phi\Box u,t^\alpha r^{\beta-1} \tilde\phi\phi u\rangle_M
\leq \|t^\alpha r^{\beta-1} \tilde\phi\phi u\|_M^2
+\|t^\alpha r^{\beta+1} \tilde\phi\phi\Box u\|_M^2.
\end{equation}
Finally, we change from $\Box$ to $\Box+V$ using that $V\in S^{-2}(X)$, so
\begin{equation}\label{eq:readd-V}
\| t^\alpha r^{\beta+1} \tilde\phi\phi Vu\|_M\lesssim
\|t^\alpha r^{\beta-1} \tilde\phi\phi u\|_M.
\end{equation}
Combining \eqref{eq:D-form-1}--\eqref{eq:readd-V},
\begin{equation}\begin{split}\label{eq:D-form-main}
&(1-\tilde C\ep)\| t^\alpha r^\beta \tilde\phi\phi\pa_t u\|_M^2\\
&\lesssim \|t^\alpha r^\beta \tilde\phi\phi\nabla_X u\|_M^2
+\ep^{-1}\|t^{\alpha-1}r^\beta \tilde\phi\phi u\|_M^2
+\big(\| t^{\alpha}r^{\beta-1} \tilde\phi\phi u\|_M^2+\|t^\alpha r^\beta \tilde\phi\phi\nabla_X u\|_M^2\big)\\
&\qquad\qquad+\big(\| t^{\alpha-1}r^{\beta}  u\|^2_{\supp d(\phi\tilde\phi)}
+\|t^\alpha r^\beta \tilde\phi\phi\nabla_X u\|^2_{\supp d(\phi\tilde\phi)}\big)\\
&\qquad\qquad+\|t^\alpha r^{\beta+1} \tilde\phi\phi(\Box+V) u\|_M^2,
\end{split}\end{equation}
where $\ep>0$ can be arbitrarily chosen without affecting $\tilde C$, so in particular
we may assume $\tilde C\ep<1/2$. This proves
the second inequality in \eqref{eq:pa-t-est}.

Since
\begin{equation}\begin{split}\label{eq:small-c-est}
&|\langle Q_i^*\tilde\phi \phi r^s t^{-1} t^{-(k+s)} R_iu,u\rangle_M|\\
&\qquad\leq 2c\|t^{-(k+s)/2} r^{(s-1)/2}\tilde\phi\phi R_i u\|_M\|t^{-(k+s)/2} r^{(s-1)/2}\tilde\phi \phi Q_i u\|_M\\
&\qquad\leq c\|t^{-(k+s)/2} r^{(s-1)/2}\tilde\phi\phi R_i u\|_M^2+
c\|t^{-(k+s)/2} r^{(s-1)/2}\tilde\phi \phi Q_i u\|_M^2,
\end{split}\end{equation}
with $c$ arising from the extra factor of $r/t,$ which is $\leq 2c$ on $\supp\phi$,
in the pairing
over what is included in the
two factors on the right,
for sufficiently small $c>0$ these can be absorbed in the positive
definite spatial term in
\eqref{comm3}, modulo terms supported on $\supp d(\phi\tilde\phi)$ and
modulo terms involving $(\Box+V) u$.

It remains to check that the ``quantum'' term (in that we need global estimates
on $\tf$ to control it), $E_s$, in \eqref{comm3} can be controlled.
With $\rho_0\in\CI_c(\RR)$ identically $1$ on a neighborhood of $\supp\phi_0$,
supported in $(-3,3)$, and
let $\rho(\dummyvar)=\rho_0(\dummyvar/c)$, $c$ as above, and consider $\rho=\rho(r/t)$.
We write
\begin{equation*}
|\langle r^{s-4}\phi(r/t)E_s\phi(r/t)u,u\rangle_X|
\lesssim
\|r^{(s-4)/2}\phi(r/t)\nabla_X u\|^2_X+\|r^{(s-4)/2}\phi(r/t)u\|^2_X.
\end{equation*}
We remark that
the left hand side rearranges the factors from \eqref{comm3} by commuting
a factor of $\phi$ through $E_s$; the difference is supported in $\supp d\phi$,
and when multiplied by $t^{-k-s}$, it is in $t^{-k-5}\Diffb^2(\tilde M)$, i.e.\ can
be controlled the same way $F_{k,s}$ was (and is indeed better behaved).
Now write
\begin{equation*}
u=\tilde\psi(H^2(\Delta+V))\rho(r/t) u+\tilde\psi(H^2(\Delta+V)) (1-\rho(r/t)) u,
\end{equation*}
and estimate the summands (and their $X$-gradients) individually.

First, to deal with the $\tilde\psi(H^2(\Delta+V)) (1-\rho(r/t))u$ term,
note that by Proposition~\ref{prop:weighted-localized-spectral-cutoff}, for any $N$,
\begin{equation*}\begin{split}
&\|\phi(r/t)\tilde\psi(H^2(\Delta+V))(1-\rho(r/t))r^N\|_{\cL(L^2(X),L^2(X))}\\
&\qquad+\|\nabla_X\phi(r/t)\tilde\psi(H^2(\Delta+V))(1-\rho(r/t))r^N\|_{\cL(L^2(X),L^2(X))}\leq C_{H,N} t^{-N},
\end{split}\end{equation*}
where the constant $C_{H,N}$ depends on $H$ (and $N$).
Since $u$ is tempered, for sufficiently large $N$, the space-time norm of
$r^{-N}t^{-N}u$ is bounded (with the bound depending on $H$), controlling this term.

Next, to deal with the
$$
\tilde u=\tilde\psi(H^2(\Delta+V)) \rho(r/t)u
$$
term, we use the low
energy localization, i.e.\ we take $H$ large.
First, for $\ep'\in(0,1/2)$,
\begin{equation*}\begin{split}
&\| r^{(s-4)/2}\phi(r/t)\tilde u\|^2_X
\leq \|r^{(s-4)/2}\tilde u\|^2_X\leq \|r^{(s-3)/2-\ep'}\tilde u\|^2_X,\\
&\|r^{(s-4)/2}\phi(r/t)\nabla_X\tilde u\|^2_X\leq \|r^{(s-3)/2-\ep'}\nabla_X\tilde u\|^2_X.
\end{split}\end{equation*}
By the Hardy/Poincar\'e lemma,
Lemma~\ref{lemma1}, using $0<s<1$, and choosing $\ep'\in(0,1/2)$ such that
$-(s-3)/2+\ep'<3/2$,
\begin{equation}\begin{split}\label{eq:E-s-est}
&\| r^{(s-3)/2-\ep'}\tilde u\|^2_X+\| r^{(s-3)/2-\ep'}\nabla_X\tilde u\|^2_X\\
&\lesssim H^{-2\ep'}\|r^{(s-3)/2} \rho(r/t) u\|_X^2\\
&=
H^{-2\ep'}\big(\|\phi(r/t)r^{(s-3)/2}  u\|_X^2+\|(\rho(r/t)-\phi(r/t))r^{(s-3)/2}  u\|_X^2\big).
\end{split}\end{equation}
For $H$ sufficiently large, the first term on the right hand side of
\eqref{eq:E-s-est}
can be absorbed into the zero'th order
positive definite spatial term,
\begin{equation}\label{eq:pos-def-spatial-1}
\Big\langle \phi(r/t)^2
\big(2s\pa_r^*r^{s-1}\chi(r)\pa_r+2r^{s-3}\chi(r)\Lap_Y+\frac{n-1}{2}\,(1-s)(n+s-3)r^{s-3}\big)u,u\Big\rangle_X,
\end{equation}
modulo errors supported on $\supp d(\tilde\phi\phi)$, which are of the same
kind as those in $F_{k,s}$ in \eqref{comm3}. On the other hand,
the second term on the right hand side of \eqref{eq:E-s-est}
is supported on
$\supp(\rho-\phi)$, and is thus similar to the $F_{k,s}$ terms, with slightly
larger support (but still within the region where this proposition gives
no improvement upon the a priori assumption). Explicitly, when also integrated
in $t$,
on $\supp (\rho-\phi)$ we have
$\nabla u \in t^\kappa L^2(M)$ (with $(k+1)/2=\kappa$), and
since on $\supp(\rho-\phi)$, $r\sim t$,
\begin{equation}\begin{split}\label{eq:chi-phi-est}
&\int t^{-k-s}\tilde\phi(t)^2\|(\rho(r/t)-\phi(r/t))r^{(s-3)/2} u\|_X^2\,dt\\
&\qquad\lesssim \int t^{-k-s}\tilde\phi(t)^2\|t^{(s-3)/2}(\rho(r/t)-\phi(r/t)) u\|^2_X\,dt\\
&\qquad\lesssim \| t^{-(k+1)/2}  \tilde\phi(t)^2 (\rho(r/t)-\phi(r/t)) u\|^2_M.
\end{split}\end{equation}

Now, for $\gamma>0$,
\begin{equation*}\begin{split}
&|\langle B_{k,s}u,(\Box+V) u\rangle_M|=|\langle B_{k,s}u,\rho(r/t)(\Box+V) u\rangle_M|\\
&\leq \|r^{(s+1)/2}t^{-(k+s)/2}\rho(r/t)(\Box+V) u\|_M \|r^{-(s+1)/2}t^{(k+s)/2}B_{k,s}u\|_M\\
&\leq \gamma^{-1}\|r^{(s+1)/2}t^{-(k+s)/2}\rho(r/t)(\Box+V) u\|_M^2
+\gamma \|r^{-(s+1)/2}t^{(k+s)/2}B_{k,s}u\|_M^2\\
&\leq \gamma^{-1}\|r^{(s+1)/2}t^{-(k+s)/2}\rho(r/t)(\Box+V) u\|_M^2
+\gamma \Big(\|r^{(s-1)/2}t^{-(k+s)/2}\rho(r/t)\chi(r)\pa_r u\|_M^2\\
&\qquad\qquad+\|r^{(s-3)/2}t^{-(k+s)/2}\rho(r/t) u\|_M^2
+\|t^{-(k+s)/2}(1-\chi(r))\rho(r/t)\nabla_X u\|_M^2\Big).
\end{split}\end{equation*}
Taking $\gamma>0$ sufficiently small, the second term on the right hand
side can be absorbed into the positive spatial terms in \eqref{comm3}, modulo terms
that are supported outside $\supp (1-\phi\tilde\phi)$, but in that region
we have a priori control of these terms.
We thus deduce from \eqref{comm3}
that
\begin{equation}\begin{split}\label{eq:commutator-result}
&\|t^{-(k+s)/2} r^{(s-1)/2} \tilde\phi\phi\nabla_X u\|_M+\|t^{-(k+s)/2} r^{(s-3)/2} \tilde\phi\phi u\|_M\\
&\lesssim \|r^{(s+1)/2}t^{-(k+s)/2}(\Box+V) u\|_{L^2(\tOmega)}+\|u\|_{t^{(k+3)/2}\Hb^1(\Upsilon)},
\end{split}\end{equation}
where $\Upsilon=\supp d\phi \cup \supp (\rho-\phi)$ and with the last
space being $\Hb^1(\tilde M)$ on $\Upsilon$, provided $0<s<1$ and
provided the right hand side is finite. Recall that
Lemma~\ref{lemma:dphihardy} means that the second term on
the right hand side may be
replaced by
$$
\norm{\nabla u}_{t^{(k+1)/2}L^2(\Upsilon)}.
$$
In view
of the Dirichlet form estimate, \eqref{eq:D-form-main},
we also get the bound for the
time derivative which is analogous to \eqref{eq:commutator-result}. Noting that
\begin{equation*}\begin{split}
&\|t^{-(k+s)/2} r^{(s-1)/2} \tilde\phi\phi\nabla u\|_M+\|t^{-(k+s)/2} r^{(s-3)/2} \tilde\phi\phi u\|_M\\
&\qquad=\|t^{-(k+1)/2} (t/r)^{(1-s)/2}\tilde\phi\phi\nabla u\|_M
+\|t^{-(k+1)/2} (t/r)^{(1-s)/2} r^{-1}\tilde\phi\phi u\|_M,
\end{split}\end{equation*}
and $0<s<1$, so $0<(1-s)/2<1/2$,
while (using $r/t<1$ on $\tOmega$ and $s>0$)
\begin{equation*}
\|r^{(s+1)/2}t^{-(k+s)/2}(\Box+V) u\|_{L^2(\tOmega)}
\lesssim \|r^{1/2}t^{-k/2}(\Box+V) u\|_{L^2(\tOmega)},
\end{equation*}
this proves the estimate of the
theorem, provided the formal pairings employed above are finite
and hence the computation justified.

In order to employ the above argument
without a priori assumptions on $u$, we need to introduce
a weight, $(1+\alpha t)^{-1}$, $\alpha>0$, in $B_{k,s}$
to justify the arguments, and let
$\alpha\to 0$. Roughly speaking, such a weight does not cause problems
since we could deal with {\em arbitrary} weights $t^{-k}$ already (i.e.\ $k$ did
not need to have a sign). More precisely,
the contribution of this weight to the commutator of $(1+\alpha t)^{-1} B_{k,s}$
and $\Box$
is similar to
\eqref{eq:t-weight-comm}, namely
$[\pa_t^2,t^{-k-s}(1+\alpha t)^{-1}]\in t^{-k-s-3}\Diffb^1(\overline{\RR}_t)$ for
$\alpha>0$, and it is uniformly bounded in the larger space
$t^{-k-s-2}\Diffb^1(\overline{\RR}_t)$ for $\alpha\in [0,1)$.
Thus, its role is analogous to that of the $Q_i^*R_i$ terms
of \eqref{comm3}, which in turn
arose from \eqref{eq:t-weight-comm}; these are
estimated in \eqref{eq:small-c-est}, and can be controlled by
making $c$ small. Letting $\alpha\to 0$,
and using standard functional analytic arguments, completes the proof of the
theorem.\qed

We finally remark that a version of this argument also works for
solutions of the wave equation at intermediate frequencies, via Mourre
estimate techniques (see especially \cite{FroMourre}).  While this is
not interesting for the homogeneous wave equation, whose middle-frequency
solutions will have rapid decay inside the light cone, for the
inhomogeneous equation it may be of some interest.  The key point then
is that one works with $\psi(\Delta+V)$, and $\tilde\psi(\Delta+V)$,
supported near a fixed energy $\lambda_0>0$, and one controls the
analogue of the $E_s$ term (that arose above) by shrinking the support
of $\tilde\psi$, using that $\tilde\psi(\Delta+V)E_s$ converges to $0$
in norm as the support is shrunk to $\{\lambda_0\}$ by the relative
compactness of $E_s$ (which follows from elliptic estimates and its
decay at spatial infinity). This holds since $\tilde\psi(\Delta+V)$
converges to $0$ strongly as the support is shrunk to $\{\lambda_0\}$,
since $\lambda_0$ is not an $L^2$ eigenvalue of $\Delta+V$.

\end{document}